\DeclareMathAlphabet{\matcal}{OMS}{zplm}{m}{n}
\newlist{hyp0}{enumerate}{1}
\setlist[hyp0]{label=(C\arabic*)}
\newlist{hyp0'}{enumerate}{1}
\setlist[hyp0']{label=(H\arabic*)}
\newlist{hyp1}{enumerate}{1}
\setlist[hyp1]{label=(A\arabic*)}
\newlist{condition}{enumerate}{1}
\setlist[condition]{label=(C)}
\numberwithin{equation}{section}		
\newtheorem{theorem}{Theorem}[section]
\newtheorem*{theorem*}{Theorem}
\newtheorem{lemma}[theorem]{Lemma}
\newtheorem{proposition}[theorem]{Proposition}
\newtheorem{remark}[theorem]{Remark}
\newtheorem{convention}[theorem]{Notational convention}
\newenvironment{customthm}[1]
  {\innercustomthm}
  {\endinnercustomthm}
\newenvironment{customprop}[1]
  {\innercustomprop}
  {\endinnercustomprop}
\DeclareMathOperator{\Tr}{Tr}
\newcommand{\Cb}{\matcal{C}_b}
\newcommand{\Yc}{\mathcal{Y}^H}
\newcommand{\R}{\mathbb{R}}
\newcommand{\N}{\mathbb{N}}
\newcommand{\EE}{\mathbb{E}}
\newcommand{\PP}{\mathbb{P}}
\newcommand{\HH}{\matcal{H}_H}
\newcommand{\wl}{\mathbf{W}_\lambda}
\newcommand{\rate}{\mathcal{M}}
\newcommand{\ratet}{\widetilde{\mathcal{M}}}
\newcommand{\sbullet}{{\scalebox{0.4}{$\bullet$}}}
\newcommand\indi[1]{\mathbb{I}_{{\mathnormal{#1}}}}
\begin{document}

\title{\normalfont \Large  {Lipschitz continuity in the Hurst parameter 
of 
functionals of stochastic differential equations driven by a frac{t}ional 
Brownian motion}}
\vspace{-1cm}

\author{Alexandre Richard\footnote{Universit\'e Paris-Saclay, CentraleSup\'elec and CNRS FR-3487, France.\newline  E-mail: \texttt{alexandre.richard@centralesupelec.fr}.} 
\and Denis Talay\footnote{Equipe-projet ASCII, Centre Inria de Saclay and Ecole Polytechnique, 
France.\newline E-mail: \texttt{denis.talay@inria.fr}.}}

\date{\small \today}

\maketitle

\begin{abstract}
 
Sensitivity analysis w.r.t. the long-range/memory noise parameter for 
probability distributions of functionals of solutions to stochastic 
differential equations is an important stochastic modeling issue in 
many applications.

In this paper we consider solutions $\{X^H_t\}_{t\in 
\R_+}$ to stochastic differential equations driven 
by frac{t}ional Brownian motions.
We develop two innovative sensitivity analyses
when the Hurst parameter~$H$ of the noise tends to the 
critical Brownian parameter $H=\tfrac{1}{2}$ from above or from 
below. First, we examine expected smooth functions of $X^H$ at a 
fixed time horizon~$T$. 
Second, we examine Laplace transforms of functionals which are 
irregular with regard to Malliavin calculus, namely, first passage 
times of $X^H$ at a given threshold.

In both cases we exhibit the Lipschitz continuity w.r.t.~$H$ around the 
value $\tfrac{1}{2}$. Therefore, our results show that the
Markov Brownian model is a good proxy 
model as long as the Hurst parameter remains close to~$\tfrac{1}{2}$.
\end{abstract}

{\sl Key words\/}: Fractional Brownian motion, Malliavin calculus, 
first hitting time.

\tableofcontents

\section{Introduction} \label{sec:Intro}

In many applied situations where stochastic 
differential equations are used, one chooses Markovian dynamics, in particular for 
the following reasons. A huge literature in stochastic analysis exists on the well-posedness, 
calibration and simulation of Markov models. Their probability distributions can be 
characterized by extensively studied partial differential equations
or integro-differential equations.  In addition, well developed 
techniques allow one to describe the convergence of Markov processes
to homogenized systems, mean-field limits or equilibrium regimes.

However, Markov models may sometimes be seen as questionable
idealizations of the reality. Empirical studies actually tend to show
memory effects in biological, financial, physical data: see
e.g.~\citet{Rypdal} for a statistical evidence in climatology. Such
empirical results justify to consider non-Markov models driven by noises
with long-range memory such as frac{t}ional Brownian motions rather than
by L\'evy noises. But \citet{JolisViles} emphasise that
choosing a noise with long-range memory does not close the modeling
problem since the parametric estimation of the model may be difficult
and crude (see~\citet{BerzinEtAl} for the statistics of stochastic
models with long-range memory).

Therefore, one often needs to balance tractable Markov models against
more realistic but complex non-Markov models. A natural question then
arises: Is it really worth introducing complex models?

To tackle this issue, we consider the case of solutions $X^H$ to 
one-dimensional stochastic differential equations driven by 
frac{t}ional Brownian 
motions~$B^H$, where the Hurst constant~$H$ parameterizes the 
covariance function of~$B^H$. The case $H=\tfrac{1}{2}$ corresponds to 
the \textsl{pure standard Brownian case}. The driving noise and 
corresponding solution are then respectively denoted by 
$\mathbf{B}\equiv 
B^{\frac{1}{2}}$ and $\mathbf{X}\equiv X^{\frac{1}{2}}$.

We develop a sensitivity analysis, w.r.t. $H$ around the reference 
value $H=\frac{1}{2}$, of probability distributions of certain 
functionals of the solutions~$X^H$. We examine the two following cases 
which respectively are regular and singular with regard to 
Malliavin calculus: on the one hand, the time marginal distributions of 
the solutions; on the other hand, the Laplace transform of first 
passage times. 

We have three motivations to consider first passage times of~$X^H$.
First, the analysis of first passage times 
is an important issue in physical sciences~(\citet{M-R-O-14}),
in the evaluation of default risks and ruin probabilities 
(\citet{Jeanblanc-al}), and in the study of neuron spike 
trains~(see \citet{ROT} and references therein). 
Second, our discussion on Markovian or non-Markovian modeling applies 
in 
force to the study of hitting times. Indeed, the Markov property of the 
process is 
essential to calculate exact probability distributions of first passage 
times, characterize these distributions by means of partial 
differential equations, or construct numerical algorithms to simulate 
them: See e.g. 
\citet{salminen-yor}, \citet{alili-patie}, 
\citet{deaconu-herrmann} and citations therein. On the contrary, the 
long-range dependence leads to analytical and numerical difficulties: 
See e.g. \citet{Jeon-al}. 
Last, it seems to us 
worthy of 
showing that an accurate sensitivity analysis is possible even in a 
case which is singular with regard to Malliavin calculus.

For $H=\frac{1}{2}$ and $X^H$ reduced to the standard Brownian motion 
$\mathbf{B}$, the Laplace transform of the first hitting time 
$\tau_{\mathbf{B}}$ of the threshold~1 satisfies 
\begin{equation} \label{Laplace:brownien}
\forall x_0\leq 1,~~\EE\left[\left(e^{-\lambda 
\tau_{\mathbf{B}}}\right)~|~
\mathbf{B}_0=x_0\right]= e^{-(1-x_0)\sqrt{2\lambda}}.
\end{equation}

For $H\neq\frac{1}{2}$, even in the simple case where $X^H$ reduces to 
$B^H$, the probability distribution of $\tau_B^H$ is 
not explicitly known. \citet{Molchan} 
obtained the asymptotic behaviour of its tail 
distribution function and \citet{DecreusefondNualart08} obtained an 
upper bound on the Laplace transform of $(\tau^H_B)^{2H}$. 
Related works are those of~\citet{NourdinViens} on the density of 
$\sup_{t\in [a,b]} B^H_t - \EE\left(\sup_{t\in [a,b]} B^H_t\right)$ 
where $0<a<b$, and of~\citet{BaudoinNualartOuyangTindel14} on hitting 
probabilities of multidimensional fractional diffusions. 
The recent work of \citet{DelormeWiese} proposes an asymptotic 
expansion (in terms of $H-\tfrac{1}{2}$) of the density of
$\sup_{t\in [0,b]} B^H_t$; this expansion is
formally obtained by perturbation analysis techniques. 

Below, we obtain an accurate estimate for 
$$ \left|\EE\left(e^{-\lambda \tau^H_X}\right) - \EE\left(e^{-\lambda 
\tau_{\mathbf{X}}}\right)\right|, $$
with explicit rates in terms of $|H-\frac{1}{2}|$, $\lambda>0$,
and the distance from the initial condition~$X^H_0$ to the threshold.
This result implies the vague convergence of $\tau^H_X$ to 
$\tau_{\mathbf{X}}$ when $H$ tends to $\tfrac{1}{2}$. In addition,
in Section~\ref{sec:examples} we exhibit sufficient conditions to get
the weak convergence and show how to use the preceding convergence rate
of the Laplace transforms to quantify the weak convergence of $\tau^H_X$.

Our sensitivity analyses of time marginal and first passage time 
distributions tend to show that the Markov Brownian model is a good 
proxy model as long as the Hurst parameter remains close 
to~$\tfrac{1}{2}$. This is an important information for modeling and 
simulation purposes: Whenever statistical or calibration procedures 
lead to estimated values of~$H$ close to $\tfrac{1}{2}$, it then is 
reasonable to work with Brownian SDEs and standard stochastic 
integration theory.

\paragraph{Why do we limit ourselves to a sensitivity analysis 
around~$H=\frac{1}{2}$?}
In this paper, contrary to~\citet{Giordano-al} and \citet{JolisViles} 
we do not develop a sensitivity analysis of the model 
around $H'\neq\frac{1}{2}$. Our reasons are as follows. First, as 
already explained, it seems interesting to us to obtain as good as 
possible sensitivity estimates around a Markov proxy model: We actually 
get Lipschitz continuity properties. 
Second, the fact that the proxy model has the Markov property allows us 
to apply the It\^o--Skorokhod formula proven in 
Section~\ref{sec:prelim} to the solution of a suitable ordinary 
differential equation, which allows us to transform the sensitivy 
analysis of the Laplace transform of $\tau^H_X$ around 
$H=\tfrac{1}{2}$ to the sensitivity analysis of Skorokhod integrals 
depending on~$X^H$ and stopped at~$\tau^H_X$. We thus can 
benefit from the fact that $X^H$ is a smooth functional on a suitable 
Wiener space. We do not see how to extend this stategy when the proxy 
model is not Markov. 
Finally, the equality 
\begin{equation} \label{eq:intro-pivot-H'}
\EE\left(e^{-\lambda \tau^H_X}\right) 
- \EE\left(e^{-\lambda \tau^{H'}_X}\right)
= \EE\left(e^{-\lambda \tau^H_X}\right) 
- \EE\left(e^{-\lambda \tau_{\mathbf{X}}}\right) 
+ \EE\left(e^{-\lambda \tau_{\mathbf{X}}}\right)
- \EE\left(e^{-\lambda \tau^{H'}_X}\right)
\end{equation}
does not seem to help to get a sharp estimate in terms of~$|H-H'|$: See 
Remark~\ref{rk:commentaire-H'} below.

\paragraph{Organization of the paper.} In Section~\ref{sec:main} we 
state and comment our main results. In Section~\ref{sec:prelim} we 
review elements of stochastic calculus for frac{t}ional Brownian 
motion and we prove an It\^o formula for drifted fractional Brownian 
motions. In Section~\ref{sec:fracDiff} we prove 
Proposition~\ref{th:gapLawXt^H} which concerns the sensitivity 
w.r.t.~$H$ of expected smooth functions of~$X^H_t$ for every~$t>0$. Our 
proof of this proposition allows 
us to smoothly introduce our strategy to analyse Laplace transforms of 
first passage times. In Section~\ref{sec:majgap_0} we prove
our main result, namely, Theorem~\ref{prop:majgap_0}. 
In Section~\ref{sec:examples} we exhibit sufficient conditions for the weak convergence
of $\tau_X^H$ to $\tau_{\mathbf{X}}$ and we
apply our main results to quantify this weak convergence.
Various technical lemmas are gathered in Appendix~\ref{app:KH-etoile}, 
Appendix~\ref{App:tecLem} and Appendix~\ref{App:moments}. 
Estimates on the
derivatives of the Laplace transform of $\tau_{\mathbf{X}}$ are proven 
in Appendix~\ref{app:boundW}. Finally, the reader can find a glossary 
of our various processes, functions, etc. in 
Appendix~\ref{App:glossary}. 

\paragraph{Notations.}
For any random variable in $L^p(\Omega)$ we set
$$ \|F\|_p := \left\{ \EE(\|F\|^p) \right\}^{\frac{1}{p}}. $$

We denote by $C$ any constant which may change from line  to line. 
It may depend on the Hurst parameter~$H$ but, in that case, it is a 
bounded function of~$H$ in~$[\frac{1}{4},1]$. 

We denote by $C_H$ any constant depending on~$H$ which tends to 
infinity when $H$ tends to $\frac{1}{4}$ or $1$ and is bounded on any 
closed subinterval of $(\frac{1}{4},1)$. Such a constant may depend on 
various parameters except on the parameter~$\lambda$ of the Laplace 
transform and the time horizon $N$ considered in 
Section~\ref{sec:majgap_0}.

\section{Main results} \label{sec:main}
We are given a frac{t}ional Brownian motion $\{B^H_t\}_{t\in \R_+}$ 
with 
Hurst parameter $H\in(0,1)$. This process is the only Gaussian process 
with stationary increments which is self-similar 
of order $H$ (up to centering and normalization of the 
variance). Its covariance reads:
\begin{equation}\label{eq:defCov}
R_H(s,t) = \tfrac{1}{2}\left(s^{2H} + t^{2H} - |t-s|^{2H}\right), \quad 
\forall s,t\in \R_+.
\end{equation}

We also are given two functions $b$ and $\sigma$ which satisfy:
\begin{hyp0'}
\item\label{hyp:h1'} $b$ belongs to $\Cb^1(\R)$ and $\sigma$ 
belongs to $\Cb^2(\R)$.
\item\label{hyp:h2'} The function $\sigma$ satisfies the strong 
ellipticity condition: $\exists \sigma_0>0$ such that $|\sigma(x)|\geq 
\sigma_0$ for every $x\in \R$.
\end{hyp0'}

As it will be recalled in Subsection~\ref{subsec:SDE_Ito}, the 
preceding hypotheses imply that for every 
$H\in(\tfrac{1}{4},1)$ and $x_0\in\R$ there exists a unique solution 
$\{X^H_t\}_{t\in \R_+}$ to the stochastic differential equation driven 
by~$\{B^H_t\}_{t\in \R_+}$:
\begin{equation}\label{eq:fSDE_0}
X^H_t = x_0 + \int_0^t b(X^H_s)\ ds + \int_0^t \sigma(X^H_s)
~\circ dB^H_s. 
\end{equation}
In particular, for $H=\tfrac{1}{2}$, there exists a unique
square integrable strong solution~$\mathbf{X}\equiv X^{\frac{1}{2}}$ 
to the Brownian SDE in the Stratonovich sense
\begin{equation}\label{eq:SDE_0}
\mathbf{X}_t = x_0 + \int_0^t b(\mathbf{X}_s)\ ds 
+ \int_0^t \sigma(\mathbf{X}_s)~\circ d\mathbf{B}_s. 
\end{equation}

Our first result is easy to prove but instructive. It will be proven in 
Section~\ref{subsec:proof_gapLawXt^H}. It concerns the sensitivity 
w.r.t. $H$ around the critical Brownian parameter $H=\tfrac{1}{2}$ of 
$\EE\varphi(X^H_t)$, where $\varphi$ is a smooth function.

\begin{customprop}{\ref{th:gapLawXt^H}}
Let $X^H$ and $\mathbf{X}$ be as above. 
Suppose that $b$ and $\sigma$ satisfy \ref{hyp:h1'} and \ref{hyp:h2'}, 
and that $\varphi$ is 
bounded and H\"older continuous of order $2+\beta$ for some $\beta>0$. 
Then, for any $T>0$, there exists $C>0$ such that 
\begin{equation*}
\forall H\in (\tfrac{1}{4},1),~~\sup_{t\in [0,T]} 
\left|\EE \varphi(X_t^H) - \EE \varphi(\mathbf{X}_t) \right| 
\leq C~|H-\tfrac{1}{2}|.
\end{equation*}
\end{customprop}

Our second result concerns a singular functional of the paths,
namely, the first passage time of $X^H$ at a given threshold (1, say).
Given $x_0<1$, set
\begin{equation}\label{eq:defTauH}
\tau^H_X := \inf\{t\geq 0: X^H_t=1\}.
\end{equation}

The precise formulation of our result is obtained by combining
the theorem~\ref{prop:majgap_0} and the proposition~\ref{prop:moments}.
Part of the difficulties overcome in the lengthy proof of~Theorem 
\ref{prop:majgap_0} come from the fact that we aim to get a 
sensitivity estimate which tends to~0 as fast as possible 
when $H$ tends to~$\tfrac{1}{2}$ and decays to~0 at the same rates as 
in the exact formula~\eqref{Laplace:brownien} when $\lambda$ and 
$|1-x_0|$ tend to infinity.

\begin{customthm} {\ref{prop:majgap_0}}
Let $X^H$ and $\mathbf{X}$ be the solutions to (\ref{eq:fSDE_0}) and 
(\ref{eq:SDE_0}) respectively, both with initial condition~$x_0<1$. 
Assume that $b$ and $\sigma$ satisfy \ref{hyp:h1'}-\ref{hyp:h2'}. 
For the monotone function~$F$ defined as in 
Proposition~\ref{prop:Lamperti}
let $\widetilde{b} := \frac{b\circ F^{-1}}{\sigma\circ F^{-1}}$,
$\mathbb{Y}:=F(1)$ and $y_0:=F(x_0)$.

For any $p\geq1$ and $\lambda>|\widetilde{b}'|_\infty$ set
\begin{equation*} %
\rate_p(\mathbb{Y}-y_0,\lambda) := \sup_{s\in \R_+} 
\left( e^{-\frac{1}{2} (\lambda-|\widetilde{b}'|_\infty) p 
s}
~\EE~e^{-|\mathbb{Y}-Y^H_s|p\mathcal{R}(\lambda)} \right),
\end{equation*} 
where
\begin{equation*} %
\mathcal{R}(\lambda):=\sqrt{2\lambda +\mu^2} - \mu
~~\text{with}~~\mu:=|\widetilde{b}|_\infty.
\end{equation*}

Suppose $x_0<1$ and $\lambda >|\widetilde{b}'|_\infty$. 
Set~$\widetilde{\lambda}:= \lambda - |\widetilde{b}'|_\infty$. 
For any $H\in(\tfrac{1}{4},1)$ we have  
\begin{multline*} \label{ineq:Laplace-transform-main-theorem}
\Big|\EE\left(e^{-\lambda \tau^H_X}\right) 
- \EE\left(e^{-\lambda \tau_{\mathbf{X}}}\right)\Big| \\
\leq C_H~|H-\tfrac{1}{2}|
~\frac{(1+\lambda)^2}{1\wedge \widetilde{\lambda}^3}
~\Big( \rate_{1}(\mathbb{Y}-y_0,{\lambda}) + 
\left(\rate_{2}(\mathbb{Y}-y_0,{\lambda}) 
\right)^{\frac{H\wedge\frac{1}{2}}{6}} + 
\left(\rate_{4}(\mathbb{Y}-y_0,{\lambda})\right)
^{\frac{H\wedge\frac{1}{2}}{12}} \Big). 
\end{multline*}
\end{customthm}

\begin{customprop}{\ref{prop:moments}}
Let $\lambda >|\widetilde{b}'|_\infty$. Let $m:=\mathbb{Y}-y_0$,
$\mu:=|\widetilde{b}|_\infty$, $q:=p\mathcal{R}(\lambda)$ and 
$\widetilde{\lambda}:= \lambda - |\widetilde{b}'|_\infty$. 

One has
\begin{equation*}  %
\rate_p(\mathbb{Y}-y_0,\lambda)
\leq C\left( e^{-\frac{q}{2}m}
+ e^{-\frac{\widetilde{\lambda}}{2} \Psi^H_q(m)} 
+\exp\left(-2^{-\frac{8}{3}}~m^{\frac{2}{1+2H}}~
\widetilde{\lambda}^{\frac{2H}{1+2H}}\right)
+ \exp\left(-\widetilde{\lambda} \tfrac{m}{2\mu} \right) \right),
\end{equation*}
where
\begin{equation*} %
\Psi^H_q(m) := \frac{m}{\mu+q}
~\indi{\left[\left(\frac{m}{\mu+q}\right)^{2H-1} < 1\right]}
+ \left(\frac{m}{\mu+q}\right)^{\frac{1}{2H}}
~\indi{\left[\left(\frac{m}{\mu+q}\right)^{2H-1} \geq 1\right]}.
\end{equation*}
\end{customprop}

The bound on $\rate_p(\mathbb{Y}-y_0,\lambda)$ given in Proposition \ref{prop:moments} yields the following asymptotic expressions when $\mu>0$:\\
If $H> \frac{1}{2}$,
\begin{equation*}
\text{for fixed $\mathbb{Y}-y_{0}$,}~
\begin{cases}
\rate_p(\mathbb{Y}-y_0,\lambda) \underset{\lambda\to \infty}{\lesssim} 
e^{-\frac{\mathbb{Y}-y_0}{2p} \sqrt{\frac{\lambda}{2}}} ,\\
\rate_p(\mathbb{Y}-y_0,\lambda) \underset{\widetilde{\lambda}\to 0}{\lesssim} e^{-\frac{1}{2} \big(\frac{\mathbb{Y}-y_0}{\mu} \wedge (\frac{\mathbb{Y}-y_0}{\mu})^{\frac{1}{2H}}\big) \widetilde{\lambda}},
\end{cases}
\end{equation*}
\begin{equation*}
\text{for fixed $\lambda$,}~
\begin{cases}
\rate_p(\mathbb{Y}-y_0,\lambda) \underset{\mathbb{Y}-y_{0}\to 
\infty}{\lesssim} e^{-2^{-\frac{8}{3}} 
\widetilde{\lambda}^{\frac{2H}{1+2H}} (\mathbb{Y}-y_0)^{\frac{2}{1+2H}} 
}, \\
\rate_p(\mathbb{Y}-y_0,\lambda) \underset{\mathbb{Y}-y_{0}\to 
0}{\lesssim} e^{-\frac{p\mathcal{R}(\lambda)}{2}(\mathbb{Y}-y_{0})} + 
e^{-\frac{\widetilde{\lambda}}{2(\mu+p\mathcal{R}(\lambda))}
(\mathbb{Y}-y_{0})}.
\end{cases}
\end{equation*}
If $H< \frac{1}{2}$,
\begin{equation*}
\text{for fixed $\mathbb{Y}-y_{0}$,}~
\begin{cases}
\rate_p(\mathbb{Y}-y_0,\lambda) \underset{\lambda\to \infty}{\lesssim} 
e^{-\frac{1}{2} \big(\frac{\mathbb{Y}-y_{0}}{p}\big)^{\frac{1}{2H}} 
\lambda^{\frac{4H-1}{4H}}}, \\
\rate_p(\mathbb{Y}-y_0,\lambda) \underset{\widetilde{\lambda}\to 0}{\lesssim} e^{-\frac{1}{2} \big(\frac{\mathbb{Y}-y_0}{\mu} \wedge (\frac{\mathbb{Y}-y_0}{\mu})^{\frac{1}{2H}}\big) \widetilde{\lambda}},
\end{cases}
\end{equation*}
\begin{equation*}
\text{for fixed $\lambda$,}~
\begin{cases}
\rate_p(\mathbb{Y}-y_0,\lambda) \underset{\mathbb{Y}-y_{0}\to 
\infty}{\lesssim} 
e^{-\frac{p\mathcal{R}(\lambda)}{2}(\mathbb{Y}-y_{0})} + 
e^{-\frac{\widetilde{\lambda}}{2(\mu+p\mathcal{R}(\lambda))}
(\mathbb{Y}-y_{0})}, \\
\rate_p(\mathbb{Y}-y_0,\lambda) \underset{\mathbb{Y}-y_{0}\to 
0}{\lesssim} 
e^{-\frac{\widetilde{\lambda}}{2(\mu+p\mathcal{R}(\lambda))^{1/(2H)}} 
(\mathbb{Y}-y_{0})^{\frac{1}{2H}} }.
\end{cases}
\end{equation*}

\section{Stochastic calculus for frac{t}ional Brownian motions and 
stochastic differential equations}\label{sec:prelim}

\subsection{Elements of stochastic calculus for fractional Brownian 
motion}\label{subsec:elements}

In this section, we briefly review the definition of Skorokhod
integrals w.r.t. frac{t}ional Brownian 
motions. The material mainly comes from \cite{Nualart}.

\begin{convention} \label{convention-of-writing}
In all this section we let the time horizon $T>0$ be fixed. 
This parameter~$T$ enters the definitions below of the operators 
$K^*_H$, $D^H$ and $\delta_H$, of the spaces $\HH$ and $|\HH|$, and of 
the corresponding norms on that spaces. For the sake of simplicity, the 
notation does not reflect the dependency on~$T$. However, when 
necessary, we will change the notation e.g. from $\delta_H$ to 
$\delta_H^{(T)}$.
\end{convention}

\paragraph{The integral kernels $K_H$.}
For any $H\in (0,1)\setminus \{\tfrac{1}{2}\}$ define the
kernel $K_H(s,u)$ as
\begin{equation} \label{def:KH}
\begin{cases}
\forall 0< s\leq r,~K_H(s,r) := 0, \\
\forall 0<r<s,~K_H(s,r) := \chi_H\ \left\{ 
\left(\frac{s(s-r)}{r}\right)^{H-\frac{1}{2}} - 
(H-\tfrac{1}{2})~r^{\frac{1}{2}-H} \int_r^s 
\theta^{H-\frac{3}{2}} (\theta-r)^{H-\frac{1}{2}}~d\theta\right\},
\end{cases}
\end{equation}
where $\chi_H$ is the bounded function of~$H$ on $(\frac{1}{4},1)$ 
defined by
\begin{equation} \label{def:chi-H}
\chi_H := \Big( \frac{2H\ \Gamma(3/2-H)}{\Gamma(H+\tfrac{1}{2})\ 
\Gamma(2-2H)}\Big)^{\frac{1}{2}}.
\end{equation}

Recall that for any $H\in(0,1)$, the covariance of the frac{t}ional 
Brownian motion is defined by~(\ref{eq:defCov}). The next equality 
explains the reason for which the kernel $K_H$ is introduced:
\begin{equation}\label{eq:decompCov}
R_H(s,t) = \int_0^{s\wedge t} K_H(s,u)~K_H(t,u)\ du.
\end{equation}

\paragraph{Useful properties of $K_H$.} In the sequel we will need the 
following basic properties of the kernel~$K_H$. 

First, since $K_H(\theta,r) = 0$ for 
$r\geq \theta$, we have 
\begin{equation} \label{eq-RH-KH}
\forall N>0,~\forall0<s<N,~\forall0<t<N,~~
R_H(s,t) = \int_0^N K_H(s,r)~K_H(t,r)\ dr.
\end{equation}
In particular, 
\begin{equation} \label{eq:norme-L2-K_H}
\forall 0<\theta<N,~~\theta^{2H} = \int_0^N K_H(\theta,v)^2 dv
= \int_0^\theta K_H(\theta,v)^2 dv.
\end{equation} 
Second, in view of the preceding equality and~(\ref{eq:defCov}), for 
any $0<v<\theta<N$ one has
\begin{align} \label{eq-delta-KH-carre}
\int_0^N (K_H(\theta,r)- K_H(v,r))^2 dr &= \int_0^N 
K_H(\theta,r)^2 dr + \int_0^N K_H(v,r)^2 dr -2 R_H(\theta,v) 
\nonumber \\
&= \theta^{2H} + v^{2H} - \left(\theta^{2H} + v^{2H} - 
(\theta-v)^{2H}\right) \nonumber \\
&= (\theta-v)^{2H}.
\end{align}

The last property of $K_H$ we will need is obtained by an easy 
calculation:
\begin{equation} \label{eq:deriv-K_H}
\partial_s K_H (s,r)  = 
\indi{\{s>r\}}~\chi_H~(H-\tfrac{1}{2})~
\left(\tfrac{s}{r}\right)^{H-\frac{1}{2}}
(s-r)^{H-\frac{3}{2}}.
\end{equation}

\paragraph{The operators $K_{H,s}^*$ and the spaces $\HH$, $|\HH|$.}
Given $s>0$ we define the operator $K_{H,s}^*$ as the dual in 
$L^2([0,s])$ of the integral operator with kernel $K_H$. For
step functions on $[0,s]$ this operator is defined by
$$ K_{H,s}^*\varphi(r) := K_H(s,r)~\varphi(r) 
+ \int_r^s \partial_\theta K_H(\theta,r) 
~(\varphi(\theta)-\varphi(r))~d\theta. $$

We now fix a time horizon $T$. According to our notational 
convention~\ref{convention-of-writing}, when no risk of confusion is 
possible we set $K_H^*\equiv K_{H,T}^*$. Denote by $\HH$ the 
Hilbert space defined as the closure of the space of step functions 
w.r.t. the scalar product
\begin{equation*}
\langle \varphi, \psi \rangle_{\HH} = \langle K_H^*\varphi, 
K_H^*\psi\rangle_{L^2([0,T])}.
\end{equation*}
This extension of $K_H^*$ as an isometric operator from $\HH$ to 
$L^2([0,T])$ is also denoted by $K_H^*$. In particular, we have
\begin{equation*}
\langle \indi{[0,s]}, \indi{[0,t]} \rangle_{\HH} = R_H(s,t).
\end{equation*}
Note that $B^H \in \HH$ iff $H>\tfrac{1}{4}$ 
(see Nualart~\cite[p.301]{Nualart}).

A natural subspace of $\HH$ will be used in the sequel: $|\HH|$ is 
the Banach space of measurable functions $\varphi$ on $[0,T]$ such that
\begin{itemize}
\item if $H<\frac{1}{2}$,
\begin{equation}\label{eq:defAbsH<}
\|\varphi\|_{|\HH|}^2 := \int_0^T \varphi_t^2~K_H(T,t)^2~dt
+ \int_0^T \left(\int_t^T |\varphi_s-\varphi_t| 
(s-t)^{H-\frac{3}{2}}\ ds \right)^2dt <\infty,
\end{equation}
\item If $H>\frac{1}{2}$,
\begin{equation}\label{eq:defAbsH}
\|\varphi\|_{|\HH|}^2 := \alpha_H \int_0^T \int_0^T 
|\varphi_s|\ |\varphi_t|\ |s-t|^{2H-2}\ ds~dt <\infty,
\end{equation}
\end{itemize}
where
\begin{equation} \label{def:alpha_H}
\alpha_H := 2H~(H-\tfrac{1}{2}).
\end{equation}

\paragraph{Useful properties of $K_H^*$.}
Below we will use the following properties of $K_H^*$ acting on $|\HH|$.

We show in Appendix~\ref{app:KH-etoile} that the following 
extension of 
$K_H^*$ from step functions to the space $|\HH|$ is well defined:
\begin{equation} \label{eq:defKH-avec-derivee}
\forall\varphi\in|\HH|,~~K_H^*\varphi(r)
:= K_H(T,r)~\varphi(r) 
+ \int_r^T \partial_\theta K_H(\theta,r) 
~(\varphi(\theta)-\varphi(r))~d\theta.
\end{equation}

One easily deduces the following from~\eqref{eq:defKH-avec-derivee}:
\begin{equation}\label{eq:KHbis}
\begin{cases}
& \text{For any}~0<t\leq T~~\text{and}~~
\varphi\in |\HH| \text{ such that } \varphi(\theta) = 0 
\text{ when 
}\theta>t,~\text{one has:} ~~\forall r<t\leq T, \\
&K_H^*\varphi(r)= K_H(t,r)~\varphi(r) + 
\int_r^t
\partial_\theta K_H(\theta,r)
~(\varphi(\theta)-\varphi(r))~d\theta \\
&\phantom{K_H^*\varphi(r)}=K_{H,t}^*\varphi(r).
\end{cases}
\end{equation}

When $H>\tfrac{1}{2}$ (and thus $H-\frac{3}{2}>-1$) and 
when $\varphi$ is in $|\HH|$, 
it also comes from~(\ref{eq:defKH-avec-derivee})  that 
\begin{equation}\label{eq:defKHgeq12}
K_H^*\varphi(r)= \int_r^{T} 
\partial_\theta K_H(\theta,r)\ \varphi(\theta)~d\theta.
\end{equation}

Finally, when $H\rightarrow \tfrac{1}{2}$, $\chi_H$ tends to~1 and thus 
$\frac{\partial}{\partial \theta} K_H(\theta,\sigma)$ converges in 
the distributional sense to the Dirac measure at point $\sigma$. 
Therefore, for any $\varphi\in|\HH|$ and $0<r<T$, 
$K_H^*\varphi(r)$ tends to $\varphi(r)$ 
when $H\rightarrow \tfrac{1}{2}$.

\paragraph{Representation of fBm as non-anticipating stochastic 
integrals.}

From the equality~\eqref{eq:decompCov} one can deduce the following 
representation of the fBm $B^H$: for some 
standard Brownian motion $\mathbf{B}\equiv B^{1/2}$,
\begin{equation} \label{BH-B}
\forall t\geq0,~~B^H_t = \int_0^t K_H(t,u)~d\mathbf{B}_u.
\end{equation}

\paragraph{Malliavin calculus for frac{t}ional Brownian motion.} 

We are given a fBm~$B^H$ and the corresponding Brownian 
motion~$\mathbf B$ as in~\eqref{BH-B}. Similarly to the Malliavin derivative 
$\mathbf{D}$ associated to the Brownian motion~$\mathbf{B}$, 
the Malliavin derivative $D^H$ is defined as an operator acting on the 
smooth random variables with values in $\HH$. The domain of $D^H$ in 
$L^p(\Omega)$ ($p>1$) is denoted by $\mathbb{D}^{1,p}$ and is the 
closure of the space of smooth random variables with respect to the norm
\begin{equation*}
\left\{\EE(|F|^p) 
+ \EE\left(\|D^H F\|_{\HH}^p\right)\right\}^{\frac{1}{p}}.
\end{equation*}
Equivalently (cf \cite[p.288]{Nualart}), $D^H$ can be defined as 
\begin{equation}\label{eq:deriveeMalliavin}
D^H = (K_H^*)^{-1}\mathbf{D}.
\end{equation}
In particular, for all $s,t\in[0,T]$ we have that
\begin{equation*}
D^H_s B^H_t=  \indi{[0,t]}(s)\ \text{ and }~\mathbf{D}_s B^H_t = 
K_H^*\left(D_\cdot^H B^H_t\right)(s) = 
K_H^*(\indi{[0,t]}(\cdot))(s) = K_H(t,s).
\end{equation*}

We denote by $\mathbb{D}^{1,2}(|\HH|)$ the set of the $|\HH|$-valued 
random variables such that
\begin{equation}\label{eq:defD12|H|}
\EE\|\xi\|^2_{|\HH|} 
+ \EE \int_0^T\|\mathbf{D}_r \xi_\sbullet\|_{|\HH|}^2~dr 
< \infty  \quad \text{if } H<\tfrac{1}{2},
\end{equation}
and 
\begin{equation}\label{eq:defD12|H|+}
\EE\|\xi\|^2_{|\HH|} 
+ \EE 
\int_{[0,T]^4} |D^H_r \xi_\theta|~|D^H_s \xi_\eta|
~|s-r|^{2H-2}~|\theta-\eta|^{2H-2}~dr~ds~d\theta~d\eta
< \infty  \quad \text{if } H>\tfrac{1}{2}.
\end{equation}
See \cite[Sec.3]{AlosMazetNualart} and \cite[p.295]{Nualart} when 
$H<\tfrac{1}{2}$ and \cite[p.288]{Nualart} when 
$H>\tfrac{1}{2}$.

The divergence operator or Skorokhod integral $\delta_H$ is defined by 
the following duality relation: for any~$F$ in $\mathbb{D}^{1,2}$ and 
any~$\xi$ in the domain $\text{dom}(\delta_H)\subset  L^2(\Omega,\HH)$ 
of~$\delta_H$, one has
$$\EE\left(\langle \xi, D^H 
F\rangle_{\HH}\right) = \EE\left(F~\delta_H(\xi)\right). $$ 
The Skorokhod integral $\delta_H$ is related to the ordinary 
Skorokhod integral $\bm{\delta}$ w.r.t. the Brownian motion $B$ as 
follows: 
for any $\xi$ such that $K_H^* \xi\in \text{dom}(\bm{\delta})$,
\begin{equation*}
\delta_H(\xi) = \bm{\delta}(K_H^* \xi).
\end{equation*}
It can be shown that $\text{dom}(\delta_H) = (K_H^*)^{-1}(\text{dom} 
(\bm{\delta}))$ and that~$\text{dom} (\delta_H)$ contains 
$\mathbb{D}^{1,2}(|\HH|)$ (see Nualart~\cite[Sec.5.2.2 and 
p.295]{Nualart} and references therein). 

We again emphasise that the preceding operators implicitly depend 
on~$T$. According to our notational 
convention~\ref{convention-of-writing} we will write $\delta_H^{(T)}$
rather than $\delta_H$ when it is necessary to take care of that 
dependency.

\subsection{Solutions to the 
SDE~(\ref{eq:fSDE_0})}\label{subsec:SDE_Ito}

Consider the SDE~(\ref{eq:fSDE_0}) in the Stratonovich sense under our 
hypotheses \ref{hyp:h1'} and \ref{hyp:h2'} on $b$ and $\sigma$.

For $H>\tfrac{1}{2}$, we consider the unique solutions in 
the sense of~\citet{Young} which are studied in 
\citet{NualartRascanu}. They are based on the generalized Stieltjes 
integrals defined in~\citet{Zaehle}. They 
coincide with Stratonovich solutions since the regularity conditions 
for Stratonovich and Young integrals to coincide are met in our context.
Their sample paths are H\"older continuous with
order~$H-\epsilon$ for any $0<\epsilon<H$.

For $H\in(\tfrac{1}{4},\tfrac{1}{2})$ we deal with the notion of 
Stratonovich solution studied by Al\`os et al~\cite{AlosLeonNualart}. 
In~\cite[Prop.6]{AlosLeonNualart} 
it is shown that for $b\in \Cb^1(\R)$ and 
$\sigma\in\Cb^2(\R)$ there exists a pathwise unique solution 
to~(\ref{eq:fSDE_0}). This Stratonovich solution admits the 
Doss-Sussman 
representation:
\begin{equation*}
\begin{cases}
X^H_t &= \alpha(B^H_t,Z^H_t), \\
Z^H_t &=  x_0 + \int_0^t b\circ\alpha(B^H_s,Z^H_s) 
\exp\left(-\int_0^{B^H_s}\sigma'\circ\alpha(u,Z^H_s) du\right) ds,
\end{cases}
\end{equation*}
where $\alpha(x,z)$ solves
\begin{equation} \label{def:alpha}
\begin{cases}
\frac{\partial \alpha}{\partial x}(x,z) &= \sigma\circ\alpha(x,z), \\
\alpha(0,z) &= z.
\end{cases}
\end{equation}
The uniqueness results from~\citet[Lem.2]{Doss}.

In both cases, $H>\tfrac{1}{2}$ and $H\in(\tfrac{1}{4},\tfrac{1}{2})$,
we will need to apply an It\^o type formula to processes of the type 
$(\Phi(t,X^H_t))$, where $\Phi$ is a smooth function. For reasons which 
will be apparent in the sequel, we need that the formula involves 
stochastic integrals with zero expectation, and thus Skorokhod 
integrals rather than Stratonovich integrals. 

A natural approach would consist in extending previous works, namely,
the It\^o-Skorokhod formula in~\cite[Thm.8]{AlosNualart}
for $H>\tfrac{1}{2}$ and the It\^o-Stratonovich formula 
in~\cite[Thm.4]{AlosLeonNualart} for $\frac{1}{4}<H<\tfrac{1}{2}$. We 
would have to strenghten our 
hypotheses to ensure that the process $\sigma(X^H_t)$ belongs 
to~$\mathbb{D}^{2,2}(|\HH|)$, however. We also would have to develop 
heavy calculations to get needed estimates on $\EE\sup_{s\leq 
t}|X^H_s|$ and $\EE|D^H_r X^H_t|^p$. In addition, the It\^o-Skorokhod 
formula would involve integrals with algebraically complex integrands.

For all these reasons, we follow another way which is allowed by the 
ellipticity condition~\ref{hyp:h2'}. 

First, we show that $X^H$ is a one-to-one 
transform of a
drifted fractional Brownian motion for any~$H\in(\frac{1}{4},1)$. That 
amounts to prove an 
It\^o formula for a specific smooth map, namely, the Lamperti 
transform. The formula is easy to prove since the dynamics of the 
transformed 
process does not involve any stochastic integral.

Second, we establish an It\^o-Skorokhod formula for general functions 
of time and drifted fractional Brownian motions. We
here 
benefit from the fact that the dynamics of the process under 
consideration does not involve a stochastic integral.

\subsection{The Lamperti process $Y^H$}\label{subsec:Lamperti}
In this section we show that a one-to-one transform of~$X^H$ is 
a fractional diffusion~$Y^H$ with constant diffusion coefficient, and 
we prove regularity properties of~$Y^H$.

\begin{proposition}\label{prop:Lamperti}
Let $H\in (\tfrac{1}{4},1)$. Assume that $b$ and $\sigma$ satisfy the 
hypotheses~\ref{hyp:h1'} and~\ref{hyp:h2'}. Let $F(x) := \int_0^x 
\frac{1}{\sigma(z)}\ dz$ be the Lamperti transform. Set 
$\widetilde{b} := \frac{b\circ F^{-1}}{\sigma\circ F^{-1}}$. 

Then the process $Y^H := F(X^H)$ is the unique pathwise solution 
to the following SDE:
\begin{equation}\label{eq:fSDE_Lamperti}
\forall t\geq 0,\quad Y^H_t = F(x_0) + B_t^H + \int_0^t 
\widetilde{b}(Y^H_s)\ ds.
\end{equation}
\end{proposition}

\begin{proof}
When $H>\tfrac{1}{2}$: The desired result is obtained by means of the 
classical chain rule since the Stratonovich integral coincides with a 
Stieltjes integral.

When $H\in(\tfrac{1}{4},\tfrac{1}{2})$:
To prove (\ref{eq:fSDE_Lamperti}), fix an arbitrary time horizon $T>0$ 
and for any $\epsilon>0$ consider the regularised process 
\begin{equation} \label{def:BH-regularise}
B^{H,\epsilon}_t = \frac{1}{2\epsilon} \int_0^t 
\left(B^H_{(s+\epsilon)\wedge T} 
- B^H_{(s-\epsilon)\vee 0}\right) ds.
\end{equation}
Set also $X^{H,\epsilon}_t = \alpha(B^{H,\epsilon}_t,Z_t)$, where 
$\alpha$ is defined by~(\ref{def:alpha}). Then the 
usual chain rule leads to
\begin{align*}
F(X^{H,\epsilon}_t) &= F(x_0) + \frac{1}{2\epsilon} 
\int_0^t \partial_x (F\circ\alpha)(B^{H,\epsilon}_s,Z_s) \times 
\left(B^H_{(s+\epsilon)\wedge T} - B^H_{(s-\epsilon)\vee 0}\right)\ d
s \\
&\hspace{1cm} + \int_0^t \partial_z 
(F\circ\alpha)(B^{H,\epsilon}_s,Z_s)\ b\circ\alpha(B^H_s,Z^H_s) 
\exp\left(-\int_0^{B^H_s} \sigma'\circ\alpha(z,Z^H_s) dz\right) d
s.
\end{align*}
The definition of $\alpha$ implies that $\partial_x (F\circ\alpha)=1$ 
and  $\partial_z \alpha(x,z) = \exp\left(\int_0^{x} 
\sigma'\circ\alpha(u,z) du\right)$. Thus 
\begin{align*}
F(X^{H,\epsilon}_t) &= F(x_0) + \frac{1}{2\epsilon}  \int_0^t 
\big(B^H_{(s+\epsilon)\wedge T} - B^H_{(s-\epsilon)\vee 0}\big)~ds 
+ \int_0^t \frac{ \frac{\partial\alpha}{\partial z} 
(B^{H,\epsilon}_s,Z_s)}{\frac{\partial\alpha}{\partial z} 
(B^H_s,Z_s)} \times 
\frac{b\circ\alpha(B^H_s,Z^H_s)}
{\sigma\circ\alpha(B^{H,\epsilon}_s,Z^H_s)}\ ds.
\end{align*}
As $B^H_0=0$ one can readily show that $\frac{1}{2\epsilon}  
\int_0^\theta \big(B^H_{(s+\epsilon)\wedge T} - B^H_{(s-\epsilon)\vee 
0}\big)~ds$ almost surely converges to $B^H_\theta$ when $\epsilon$ 
tends to~0 and this convergence is uniform on~$[0,T]$. 
The almost sure convergence of each side of the preceding equality 
yields~Equation (\ref{eq:fSDE_Lamperti}). Pathwise uniqueness of the 
solution
results from the Lipschitz property of~$\widetilde{b}$: See 
Subsection~\ref{subsec:SDE_Ito}.
\end{proof}

\paragraph{Properties of the Lamperti process $Y^H$.} 
We now state useful estimates on $Y^H$ and its 
Malliavin derivatives. We use the following representation which is 
valid for any $H\in (\tfrac{1}{4},1)$ (see~\citet{NualartSaussereau}):
\begin{equation}\label{eq:DrXH}
\begin{cases}
\forall r>t,~~D^H_r Y^H_t = 0, \\
\forall r\leq t,~~D^H_r Y^H_t 
= 1+\int_r^t D^H_r Y^H_u\ \widetilde{b}'(Y^H_u)~du.
\end{cases}
\end{equation}
From~\eqref{eq:DrXH} one readily gets
\begin{equation}\label{eq:DY}
\forall r>0,~\forall t>0,~~~
D_r^H Y_t^H = \indi{[0,t]}(r) 
\exp\left(\int_r^t \widetilde{b}'(Y^H_u)~du\right).
\end{equation}

The following proposition is an obvious consequence of 
\eqref{eq:fSDE_Lamperti} and \eqref{eq:DY}.
\begin{proposition}\label{lem:HolderFSDE}
Let $b$ and $\sigma$ satisfy hypotheses~\ref{hyp:h1'}-\ref{hyp:h2'}. 
It a.s. holds that
\begin{align}
& 0\leq D_r^H Y_t^H
\leq \indi{[0,t]}(r)~e^{|\widetilde{b}'|_\infty (t-r)}
\label{ineq:DrYH} \\
& \forall 0\leq r\leq s\leq t,~|D^H_r Y^H_t - D^H_r Y^H_s|
\leq e^{|\widetilde{b}'|_\infty (t-r)}~|\widetilde{b}'|_\infty~(t-s).
\label{ineq:DrYHt-DrYHs}\\
& \forall 0\leq r\leq r'\leq t,~ |D^H_r Y^H_t - D^H_{r'} Y^H_t|
\leq e^{|\widetilde{b}'|_\infty (r'-r)}\, |\widetilde{b}'|_\infty~(r'-r) .
\label{ineq:DrYHt-DrprimeYHt}
\end{align}
\end{proposition}

In addition to the preceding estimates on~$D^H_\sbullet Y^H_\sbullet$
we will need accurate estimates on~$\mathbf{D}_\sbullet Y^H_\sbullet$. 
The next proposition provides two 
such useful estimates. The upper bounds are expressed in terms of
the kernel~$K_H$ because, in the sequel, we will either use pointwise 
estimates on $K_H$ or the $L^2(0,T)$ 
properties~\eqref{eq:decompCov} and~\eqref{eq-delta-KH-carre} 
(see e.g. the proof of~Proposition~\ref{lem:YHinD12} and the 
calculations in Appendix \ref{App:tecLem}).
Notice that~\eqref{eq:deriveeMalliavin} and~\eqref{eq:KHbis} imply
\begin{equation} \label{eq:deriv-Malliavin-Y^H}
\forall 0<r\leq t,~~
\mathbf{D}_r Y^H_t = K_H^*\left(D^H_\sbullet Y^H_t\right)(r)
= K_{H,t}^*\left(D^H_\sbullet Y^H_t\right)(r).
\end{equation}

\begin{proposition}\label{lem:DYH}
Let $H\in(0,1)\setminus \{\tfrac{1}{2}\}$. One then has
\begin{equation} \label{D_rY}
\begin{cases}
\forall r>t,~ |\mathbf{D}_r Y^H_t| = 0, \\
\forall r\leq t,~|\mathbf{D}_r Y^H_t| 
\leq C~e^{|\widetilde{b}'|_\infty(t-r)} \left\{  |K_H(t,r)|  
+ (t-r)^{H+\frac{1}{2}} 
~\indi{\{H<\frac{1}{2}\}} \right\}.
\end{cases}
\end{equation}
In addition, for any $r\leq s < t$ it holds that
\begin{multline} \label{ineq:diff-D_rY}
|\mathbf{D}_r Y^H_t - \mathbf{D}_r Y^H_s| \\
\leq C~e^{|\widetilde{b}'|_\infty(t-r)} 
\left\{ |K_H(t,r) - K_H(s,r)|
+ (t-s) \left(|K_H(s,r)|+(s-r)^{H+\frac{1}{2}} 
~\indi{\{H<\frac{1}{2}\}}\right)\right\}.
\end{multline}
\end{proposition}

\begin{proof}
\textbf{(A)} To prove~\eqref{D_rY}
we successively examine the cases $H>\tfrac{1}{2}$ 
and~$H<\tfrac{1}{2}$.

\vspace{0.2cm}

\underline{The case $H>\tfrac{1}{2}$.} 

For $r>t$ we deduce~\eqref{D_rY} from~\eqref{eq:deriv-Malliavin-Y^H} 
and~\eqref{eq:DY}.

For $r\leq t$ we start with using~\eqref{eq:deriv-Malliavin-Y^H} 
and~\eqref{eq:defKHgeq12} to get
$$ |\mathbf{D}_r Y^H_t|
= \left|\int_r^t \partial_\theta K_H (\theta,r) 
~D^H_\theta Y^H_td\theta \right|. $$
Observe that~\eqref{eq:deriv-K_H} implies that
$K_H$ and $\partial_\theta K_H$ are non-negative when 
$H>\tfrac{1}{2}$. By using~\eqref{ineq:DrYH} we deduce from the 
preceding that
$$ |\mathbf{D}_r Y^H_t| \leq e^{|\widetilde{b}'|_\infty (t-r)} \int_r^t 
\partial_\theta K_H (\theta,r)~d\theta
\leq e^{|\widetilde{b}'|_\infty (t-r)}~K_H(t,r), $$
which is the desired inequality.

\underline{The case $H<\tfrac{1}{2}$.} 

For $r>t$ the inequality~\eqref{D_rY} follows from~\eqref{eq:DY}.

For $r\leq t$, we use~\eqref{eq:deriv-Malliavin-Y^H} to get
\begin{equation*}
|\mathbf{D}_r Y^H_t|
= \left| K_H(t,r) D^H_r Y^H_t + \chi_H~(H-\tfrac{1}{2}) \int_r^t 
\left(\frac{\theta}{r}\right)^{H-\frac{1}{2}} 
(\theta-r)^{H-\frac{3}{2}}  \left(D^H_\theta Y^H_t - D^H_r Y^H_t 
\right) d\theta \right|.
\end{equation*}
In view of~\eqref{eq:DY} and \eqref{ineq:DrYHt-DrprimeYHt}
one thus has
\begin{align*}
|\mathbf{D}_r Y^H_t|&\leq e^{|\widetilde{b}'|_\infty (t-r)} 
\Big\{|K_H(t,r)| + \chi_H 
~|\widetilde{b}'|_\infty~|H-\tfrac{1}{2}| \int_r^t 
\left(\frac{\theta}{r}\right)^{H-\frac{1}{2}} 
(\theta-r)^{H-\frac{1}{2}}~d\theta \Big\}.
\end{align*}
For $0<r<\theta$ and $H<\frac{1}{2}$ one has 
$\left(\frac{\theta}{r}\right)^{H-\frac{1}{2}}<1$. 
The inequality~\eqref{D_rY} follows.

\vspace{1cm}

\textbf{(B)}
 To prove~\eqref{ineq:diff-D_rY}, let $r\leq s <t$. We again 
successively examine the cases $H>\tfrac{1}{2}$ and~$H<\tfrac{1}{2}$.

\vspace{0.2cm}

\underline{The case $H>\tfrac{1}{2}$.} 

We use~\eqref{eq:deriv-Malliavin-Y^H} and~\eqref{eq:defKHgeq12} to get
\begin{equation*}\label{eq:DY^H-H-gt-1/2}
\begin{split}
\mathbf{D}_r Y^H_t - \mathbf{D}_r Y^H_s &= K_H^*\left(D^H_\sbullet 
Y^H_t - D^H_\sbullet Y^H_s\right)(r) \\
&= \int_r^t \partial_\theta K_H(\theta,r) 
\left(D^H_\theta Y^H_t - D^H_\theta Y^H_s\right)~d\theta \\
&= \int_r^s \partial_\theta K_H (\theta,r) 
\left(D^H_\theta Y^H_t - D^H_\theta Y^H_s\right) d\theta + 
\int_s^t \partial_\theta K_H (\theta,r) 
D^H_\theta Y^H_t d\theta.
\end{split}
\end{equation*}
We now combine~\eqref{ineq:DrYHt-DrYHs}, ~\eqref{ineq:DrYH} and 
the non-negativity of $\partial_\theta K_H$ when 
$H>\frac{1}{2}$. It comes:
\begin{equation*}
\begin{split}
|\mathbf{D}_r Y^H_t - \mathbf{D}_r Y^H_s|
&\leq  e^{|\widetilde{b}'|_\infty(t-r)}  
\left\{|\widetilde{b}'|_\infty (t-s)\int_r^s 
\partial_\theta K_H (\theta,r)~d\theta + \int_s^t 
\partial_\theta K_H (\theta,r)~d\theta\right\} \\
&\leq C~e^{|\widetilde{b}'|_\infty(t-r)}  
\left\{ (t-s)~K_H(s,r) 
+ K_H(t,r) - K_H(s,r) \right\}.
\end{split}
\end{equation*}
We thus have obtained~\eqref{ineq:diff-D_rY} when $H>\frac{1}{2}$.

\underline{The case $H<\tfrac{1}{2}$.} 

In view of~\eqref{eq:deriv-Malliavin-Y^H} one has
\begin{align} \label{eq:DY^H}
\mathbf{D}_r Y^H_t - \mathbf{D}_r Y^H_s &= K_H^*\left(D^H_\sbullet 
Y^H_t - D^H_\sbullet Y^H_s\right)(r) \nonumber \\
&= K_H(t,r) \left(D^H_r Y^H_t - D^H_r Y^H_s\right) \nonumber \\
&~~~~~~+ \int_r^s 
\partial_\theta K_H(\theta,r) \left(D^H_\theta Y^H_t 
- D^H_\theta Y^H_s - (D^H_r Y^H_t - D^H_r Y^H_s)\right) d\theta 
\nonumber \\
&~~~~~~+ \int_s^t
\partial_\theta K_H(\theta,r) \left(D^H_\theta Y^H_t 
- (D^H_r Y^H_t - D^H_r Y^H_s)\right) d\theta \nonumber \\
&= \int_r^s 
\partial_\theta K_H(\theta,r) \left(D^H_\theta Y^H_t 
- D^H_\theta Y^H_s - (D^H_r Y^H_t - D^H_r Y^H_s)\right) d\theta 
\nonumber \\
&~~~~~~+ \int_s^t \partial_\theta K_H(\theta,r) 
D^H_\theta Y^H_t~d\theta + K_H(s,r) \left(D^H_r Y^H_t - D^H_r 
Y^H_s\right) \nonumber \\
&=: A_1 + A_2 + A_3.
\end{align}

Use \eqref{eq:DY} and apply the Mean Value theorem to the map
$$ v\in[r,\theta] \mapsto 
\exp\big(\int_v^t \widetilde{b}'(Y^H_u)~du\big)
- \exp\big(\int_v^s \widetilde{b}'(Y^H_u)~du\big). $$
For $r<\theta<s$ it comes
$$ |D^H_\theta Y^H_t - D^H_\theta Y^H_s - (D^H_r Y^H_t - D^H_r 
Y^H_s)| 
\leq |\widetilde{b}'|_\infty^2~e^{|\widetilde{b}'|_\infty (t-r)} 
(\theta - r)(t-s). $$
By successively using~\eqref{eq:deriv-K_H} and $H<\frac{1}{2}$ we 
obtain 
\begin{align} \label{ineq:A1}
|A_1| &\leq
C~e^{|\widetilde{b}'|_\infty (t-r)}~(t-s)\int_r^s 
\Big(\frac{\theta}{r}\Big)^{H-\frac{1}{2}} 
(\theta-r)^{H-\frac{1}{2}}~d\theta \nonumber \\
&\leq C~e^{|\widetilde{b}'|_\infty (t-r)}~(t-s) \int_r^s 
(\theta-r)^{H-\frac{1}{2}}~d\theta \nonumber \\
&\leq
C~e^{|\widetilde{b}'|_\infty (t-r)}~(t-s)~(s-r)^{H+\frac{1}{2}}.
\end{align}

We now estimate $|A_2|$. The equality $\eqref{eq:deriv-K_H}$ shows that 
$\partial_{\theta} K_H(\theta,r) \leq 0$ when 
$H<\frac{1}{2}$. Therefore,
\begin{equation} \label{ineq:A2}
|A_2| \leq e^{|\widetilde{b}'|_\infty (t-r)}~
\int_s^t (-\partial_\theta K_H(\theta,r))~d\theta
= e^{|\widetilde{b}'|_\infty (t-r)}~|K_H(t,r) - K_H(s,r)|.
\end{equation}

We finally consider $|A_3|$. In view of~\eqref{ineq:DrYHt-DrYHs} we have
\begin{equation} \label{ineq:A3}
|A_3| \leq e^{|\widetilde{b}'|_\infty 
(t-r)}~|\widetilde{b}'|_\infty~(t-s)~|K_H(s,r)|. 
\end{equation}

It now remains to combine~\eqref{eq:DY^H}, \eqref{ineq:A1},
\eqref{ineq:A2} and \eqref{ineq:A3}. We deduce~\eqref{ineq:diff-D_rY}
for $H<\frac{1}{2}$.
\end{proof}

\begin{remark}\label{rk:boundb'}
In Section~\ref{sec:examples} we reinforce our hypotheses on the drift $\widetilde{b}$ and obtain weak convergence rate estimates on the law of~ $\tau^H_X$ when $H$ tends to $\frac{1}{2}$. 
To carry out this program we will need to deal with bounds from above on moments of
$\mathbf{D}_r Y^H_t $ and of its time increments rather than the univeral a.s. bounds in Lemma~\ref{lem:DYH}.

In the preceding proof,  we used an a.s. bound on
$$ \sup_{\theta\in (r,t)} \exp\left( \int_{\theta}^t \widetilde{b}'(Y^H_{u})\, du \right). $$
In addition,
\begin{itemize}
\item For $H>\tfrac{1}{2}$  we used an a.s. bound on
$$ \sup_{\theta\in (r,s)} \frac{1}{t-s} \Big( \exp\big( \int_{\theta}^t \widetilde{b}'(Y^H_{u})\, du \big) - \exp\big( \int_{\theta}^s \widetilde{b}'(Y^H_{u})\, du \big) \Big) $$
to prove~\eqref{ineq:diff-D_rY}.
\item For $H<\tfrac{1}{2}$ we used a.s. bounds 
$$ \sup_{\theta\in (r,t)} \frac{1}{\theta-r} \Big( \exp\big( \int_{\theta}^t \widetilde{b}'(Y^H_{u})\, du \big) - \exp\big( \int_{r}^t \widetilde{b}'(Y^H_{u})\, du \big) \Big) $$
and
\begin{align*}
\sup_{\theta\in (r,s)} \frac{1}{(\theta-r)(t-s)} \Big( &\exp\big( \int_{\theta}^t \widetilde{b}'(Y^H_{u})\, du \big) - \exp\big( \int_{\theta}^s \widetilde{b}'(Y^H_{u})\, du \big) \\
 &- \exp\big( \int_{r}^t \widetilde{b}'(Y^H_{u})\, du \big) 
+ \exp\big( \int_{r}^s \widetilde{b}'(Y^H_{u})\, du \big)\Big) 
\end{align*}
to prove~\eqref{D_rY} and~\eqref{ineq:diff-D_rY} respectively.
\end{itemize}

Using Taylor expansions, all these quantities are bounded by the following function defined for any $r\leq t$:
\begin{align}\label{eq:varpiH}
\varpi_{H}(r,t) &:= (1\vee |\widetilde{b}'|_\infty^2)~\sup_{r<\theta<s<t} D^H_{\theta} Y^H_{s} = (1\vee |\widetilde{b}'|_\infty^2)~\sup_{r<\theta<s<t} \exp\big( \int_{\theta}^s \widetilde{b}'(Y^H_{u})\, du \big) .
\end{align}
The calculations in the preceding proof show that 
\begin{align}
& \forall r\leq t,~|\mathbf{D}_r Y^H_t| 
\leq C\left\{  |K_H(t,r)|  
+ (t-r)^{H+\frac{1}{2}} 
~\indi{\{H<\frac{1}{2}\}} \right\} \varpi_{H}(r,t) \tag{\ref{D_rY}'}
\label{D_rYneg}\\
&\forall r\leq s < t, ~ |\mathbf{D}_r Y^H_t - \mathbf{D}_r Y^H_s| \nonumber\\
&\hspace{1cm}\leq C \left\{ |K_H(t,r) - K_H(s,r)|
+ (t-s) \left(|K_H(s,r)|+(s-r)^{H+\frac{1}{2}} 
~\indi{\{H<\frac{1}{2}\}}\right)\right\} \varpi_{H}(r,t). \tag{\ref{ineq:diff-D_rY}'}
\label{ineq:diff-D_rYneg}
\end{align}
\end{remark}

\subsection{An Itô-Skorokhod formula for fractional Brownian motions
with drift}

In this subsection, we prove an It\^o-Skorokhod formula for processes 
of the form
\begin{align}\label{eq:defXcal}
\Yc_t = y_0 + B^H_t + \int_0^t \beta_s ~ds, \quad t\in[0,T],
\end{align}
where $(\beta_s,~s\in[0,T])$ is a smooth enough 
stochastic process. In our next section we will check that
the formula~\eqref{eq:ItoSkorokhodFormula0} below applies to the
solution~$Y^H$ of the SDE~\eqref{eq:fSDE_Lamperti}. 
It involves the Trace term~\eqref{eq:formula-Tr-BM-drift} which is 
related to the conversion formula from Stratonovich integrals 
w.r.t.~$B^H$ to Skorokhod integrals (see e.g.~\cite{AlosLeonNualart}).
However, as explained in Section~\ref{subsec:SDE_Ito}, 
Stratonovich integrals are useless to our purpose.

\begin{proposition}\label{prop:ItoSkorokhod}
Let $(\Yc_t)_{t\in[0,T]}$ be a process of the form 
\eqref{eq:defXcal}. Assume that $\beta$ is progressively measurable 
w.r.t. the Brownian 
filtration generated by $\mathbf{B}$. Suppose also that for every 
$0\leq s\leq T$ the random variable $\beta_s$ belongs 
to~$\mathbb{D}^{1,2}$ and the process $(\int_{0}^t \beta_s \, 
ds,~t\in[0,T])$ belongs to~$\mathbb{D}^{1,2}(|\HH|)$.
Assume also that for any $0\leq t,r \leq T$,
\begin{equation}\label{eq:AssumptionDrBeta}
\begin{cases}
\mbox{ if } H>\frac{1}{2},
&\left(\int_0^t \EE|\mathbf{D}_{r} \beta_s|^2 ~ds\right)^\frac{1}{2} 
\leq C~|t-r|^\alpha~~~\text{for some}~~\alpha>\frac{1}{2}-H, \\
\mbox{ if } H<\frac{1}{2}, 
&\left(\int_0^t \EE|\mathbf{D}_{r} \beta_s|^2 ~ds\right)^\frac{1}{2}
\leq C.
\end{cases}
\end{equation}
Then, for every 
$H\in(\tfrac{1}{4},1)$, the process
$\mathcal{Y}^H$ belongs to~$\textrm{dom}(\delta_H)$
and for all $G\in\mathcal{C}^{1,2}_b([0,T]\times \R)$ and 
$0\leq t\leq T$ one has
\begin{equation} \label{eq:ItoSkorokhodFormula0}
\begin{split}
G(t,\Yc_t) = & ~G(0,y_0) 
+ \int_0^t \left(\partial_s G(s,\Yc_s) 
+ \partial_y G(s,\Yc_s)~\beta_s\right) d s \\
&+ \delta_H^{(T)}\left(\indi{[0,t]}~(\cdot)~\partial_y
G(\cdot,\Yc_\cdot)~\right) 
+ \Tr\Big[ D^H \partial_y G(\cdot,\Yc_\cdot)\Big]_t,
\end{split}
\end{equation} 
where
\begin{equation} \label{eq:formula-Tr-BM-drift}
\Tr\left[D^H \partial_y G(\cdot,\Yc_\cdot)\right]_t 
:= \int_0^t \partial^2_yG(s,\Yc_s)
\Big( Hs^{2H-1} + \int_0^s 
\partial_s K_H(s,r)\int_0^s \mathbf{D}_r \beta_v~dv~dr\Big) ds.
\end{equation}
\end{proposition}

\begin{proof}
The lengthy proof is divided in several steps. After having checked a 
preliminary result, in Step 1 we derive an It\^o formula pour smooth 
functions of a semi-martingale~$\mathcal{Y}^{H,\epsilon}$ which 
approximates~$\mathcal{Y}^H$. In Steps~2 and 3 we successively prove 
the convergence of each term which arises in the It\^o formula for
$\mathcal{Y}^{H,\epsilon}$.

\vspace{0.3cm} 

\underline{\emph{Preliminary: A stochastic Fubini equality.}}
We start with proving a stochastic Fubini equality. Let the process 
$u(\cdot,\cdot)$ 
be such that
\begin{equation} \label{ineq:D-1-2}
\EE\left[ \int_{0}^T \int_{0}^T 
| u(r,s)|^2\, ds\, dr  \right]
+ \EE\left[ \int_{0}^T \int_{0}^T \int_{0}^T |\mathbf{D}_{\theta} 
u(r,s)|^2\, d\theta\, ds\, dr  \right] < \infty.
\end{equation}
We use the notational convention~\ref{convention-of-writing}
to define the operator~$\bm{\delta}^{(T)}$. Let us check that
\begin{align}\label{eq:StocFubini-denis}
\bm{\delta}^{(T)}\Big(\int_0^T u(r,\cdot)~dr\Big)
= \int_0^T \bm{\delta}^{(T)}(u(r,\cdot))~dr.
\end{align} 
Indeed, for any $F\in \mathbb{D}^{1,2}$ one has
\begin{align*} 
\EE \left[ F~\bm{\delta}^{(T)}\Big(\int_0^T u(r,\cdot)~dr\Big) \right] 
&= \EE\left[ \int_0^T  \mathbf{D}_s F~\int_0^T u(r,s)~
dr ~ ds  \right] \nonumber \\
&= \int_0^T \EE\left[ \int_0^T  \mathbf{D}_s F~  u(r,s)~ds  
\right]~dr \nonumber \\
&=\int_0^T \EE\left[ F~\bm{\delta}^{(T)}~(u(r,\cdot))\right]~dr 
\nonumber \\
&= \EE\left[ F~\int_0^T\bm{\delta}^{(T)}(u(r,\cdot))~dr\right].
\end{align*}
In the preceding calculation, we used the classical Fubini Theorem in 
the second and fourth line, and the duality between the Skorokhod 
integral and the derivative operator in the third one.

We now proceed to the proof of~\eqref{eq:ItoSkorokhodFormula0}.

\vspace{0.3cm}
\underline{\emph{Step $1$: An It\^o formula for an approximation of 
$\mathcal{Y}^H$.}}

Recall the representation~\eqref{BH-B}. By smoothing the kernel $K_H$,
for any $\epsilon>0$ we define the smoothened fBM $B^{H,\varepsilon}$ by
\begin{equation} \label{BH-B-eps}
\forall t\geq0,~~B^{H,\varepsilon}_t 
:= \int_0^t K_H(t+\varepsilon,s)~d\mathbf{B}_s.
\end{equation}
Consider the following process:
$$ \mathcal{Y}^{H,\varepsilon}_t := y_0 + B^{H,\varepsilon}_t + 
\int_0^t \beta_s~ds. $$
The process $B^{H,\varepsilon}$ is not a martingale. As we plan to 
apply the standard It\^o formula for continuous semimartingales we 
use~\eqref{eq:StocFubini-denis} with $u(s,r)\equiv \partial_s 
K_H(s+\varepsilon,r)~\indi{r\leq s}$ to 
rewrite~$\mathcal{Y}^{H,\varepsilon}_t$ as
$$ \mathcal{Y}^{H,\varepsilon}_{t} = y_0 + \int_0^t 
K_H(s+\varepsilon,s)~d\mathbf{B}_s + \int_0^t \left(\int_0^s 
\partial_s K_H(s+\varepsilon,r)~d\mathbf{B}_{r} 
\right) ds + \int_0^t \beta_s~ds. $$
We thus are in a position to apply the Itô formula. It comes:
\begin{equation}\label{eq:ItoSkor1-denis}
\begin{split}
G(t,\mathcal{Y}^{H,\varepsilon}_{t} ) &= 
G(0,y_0) + \int_0^t \left\{ \partial_s 
G(s,\mathcal{Y}^{H,\varepsilon}_s ) + \partial_y 
G(s,\mathcal{Y}^{H,\varepsilon}_s ) \beta_s \right\} ds \\
&~~~+  \int_0^t \partial_y G(r,\mathcal{Y}^{H,\varepsilon}_{r} ) 
\int_0^r \partial_r K_H(r+\varepsilon,s)
~d\mathbf{B}_s~dr \\
&~~~+ \int_0^t \partial_y 
G(s,\mathcal{Y}^{H,\varepsilon}_s ) K_H(s+\varepsilon,s) ~
d\mathbf{B}_s + \tfrac{1}{2} \int_0^t \partial_y^2 
G(s,\mathcal{Y}^{H,\varepsilon}_s ) K_H(s+\varepsilon,s)^2 ~ds.
\end{split}
\end{equation}
In the preceding right-hand side, when $\frac{1}{4}<H<\frac{1}{2}$ the second and third integrals 
diverge when~$\epsilon$ tends to~0. We therefore are going to transform 
their sum. First, we use a standard property of Skorokhod integrals to 
get
\begin{equation}\label{eq:IBP-denis}
\begin{split}
\bm{\delta}^{(T)} \Big(\indi{(0,r)}(\cdot)~\partial_y 
G(r,\mathcal{Y}^{H,\varepsilon}_r)  
\partial_r K_H(r+\varepsilon,\cdot) \Big)  
&= \partial_y G(r,\mathcal{Y}^{H,\varepsilon}_{r} ) \int_0^r 
\partial_r K_H(r+\varepsilon,s) ~d\mathbf{B}_s \\
&\quad - \int_0^r \mathbf{D}_s \left(\partial_y 
G(r,\mathcal{Y}^{H,\varepsilon}_{r} ) \right) 
\partial_r K_H(r+\varepsilon,s)~ds.
\end{split}
\end{equation}
Second, we apply the Fubini formula~\eqref{eq:StocFubini-denis} 
to the Skorokhod integral. This is allowed because
$$ u(r,s) \equiv \indi{[0,r]}(s) 
~\partial_{y} G(r,\mathcal{Y}^{H,\varepsilon}_{r} ) 
~\partial_r K_H(r+\varepsilon,s) $$ 
satisfies~\eqref{ineq:D-1-2} for the following reason.
Since $\partial_{y} G$ and $s\mapsto 
\indi{[0,r]}(s)~\partial_r K_H(r+\varepsilon,s)$ are 
bounded, we have that $u(\cdot,\cdot) \in L^2(\Omega; L^2[0,T]^2)$.
In addition, the assumption~\eqref{eq:AssumptionDrBeta} implies that
\begin{align*}
\EE\left[ \int_{0}^T \int_{0}^T \int_{0}^T |\mathbf{D}_{\theta} 
u(r,s)|^2\, d\theta\, ds\, dr  \right] &\leq  C \, \EE\left[ \int_{0}^T 
\int_{0}^r \int_{0}^T |K_{H}(r+\varepsilon,\theta) + \int_{0}^r 
\mathbf{D}_{\theta} \beta_{v}\, dv|^2\, d\theta\, ds\, dr  \right] \\
&\leq C \left(1 + \int_{0}^T \int_{0}^r \int_{0}^T \int_{0}^r \EE 
|\mathbf{D}_{\theta} \beta_{v}|^2\, dv \, d\theta\, ds\, dr \right)  < 
\infty.
\end{align*}
Third, we observe that
the assumption $\int_{0}^\cdot \beta_{r}\, dr \in 
\mathbb{D}^{1,2}(|\HH|)$ implies that 
$$ \mathbf{D}_s \left(\partial_y 
G(r,\mathcal{Y}^{H,\varepsilon}_{r} ) \right)
= \partial_y^2 G(r,\mathcal{Y}^{H,\varepsilon}_r) 
\left(K_H(r+\varepsilon,s) + \int_0^r \mathbf{D}_s \beta_v~dv\right). $$
and we plug this equality into~\eqref{eq:IBP-denis}. 
Finally, we permute the variables~$r$ and~$s$ in the Lebesgue integral. 
It comes:
\begin{equation*}
\begin{split}
\int_0^t \partial_y G(r,\mathcal{Y}^{H,\varepsilon}_{r} ) 
&\int_0^r \partial_r K_H(r+\varepsilon,s) ~
d\mathbf{B}_s~dr \\
&= \int_0^T \indi{(0,t)}(r)~\bm{\delta}^{(T)} \Big(\indi{(0,r)}(\cdot)
~\partial_y G(r,\mathcal{Y}^{H,\varepsilon}_r)  
\partial_r K_H(r+\varepsilon,\cdot) \Big)~dr \\
&~~~+ \int_0^t \partial_y^2 G(r,\mathcal{Y}^{H,\varepsilon}_r) \int_0^r 
\partial_r K_H(r+\varepsilon,s)
~\Big( K_H(r+\varepsilon,s) + \int_0^r \mathbf{D}_s \beta_v~dv \Big)
~ds~dr \\
&= \bm{\delta}^{(T)} 
\Big( \indi{(0,t)}(\cdot)~\int_\cdot^t~\partial_y 
G(r,\mathcal{Y}^{H,\varepsilon}_r)  
\partial_r K_H(r+\varepsilon,\cdot)~dr \Big) \\
&~~~+ \int_0^t \partial_y^2 G(s,\mathcal{Y}^{H,\varepsilon}_s) 
\int_0^s \partial_s K_H(s+\varepsilon,r)~
\Big( K_H(s+\varepsilon,r) + \int_0^s \mathbf{D}_r \beta_v~dv \Big)
~dr~ds.
\end{split}
\end{equation*}
Finally, we combine the preceding equality 
with~\eqref{eq:ItoSkor1-denis}. 
It results:
\begin{align}
G(t,\mathcal{Y}^{H,\varepsilon}_{t} ) &= 
G(0,\mathcal{Y}^{H,\varepsilon}_0 ) + \int_0^t \left\{ \partial_s 
G(s,\mathcal{Y}^{H,\varepsilon}_s ) + \partial_y 
G(s,\mathcal{Y}^{H,\varepsilon}_s ) \beta_s \right\} ds \label{eq:regularisedIto1}\\
&~~~+ \bm{\delta}^{(T)} \Big(\indi{(0,t)}(\cdot)~
\partial_y G(\cdot,\mathcal{Y}^{H,\varepsilon}_\cdot ) 
K_H(\cdot+\varepsilon,\cdot) \nonumber\\
&~~~~~~~~~~~~~~~~~~~~~~~~~~~+ \indi{(0,t)}(\cdot)
\int_{\cdot}^t\partial_y G(r,\mathcal{Y}^{H,\varepsilon}_r)  
\partial_r K_H(r+\varepsilon,\cdot)~dr~\Big)
\label{eq:regularisedIto2}\\
&~~~+ 
\int_0^t \partial_y^2 G(s,\mathcal{Y}^{H,\varepsilon}_s) 
\Big(\tfrac{1}{2}  K_H(s+\varepsilon,s)^2 + \int_0^s 
\partial_s K_H(s+\varepsilon,r)
~K_H(s+\varepsilon,r)~dr \nonumber\\
&~~~~~~~~~~~~~~~~~~~~~~~~~~~+
\int_0^s \partial_s K_H(s+\varepsilon,r)
\int_0^s \mathbf{D}_r \beta_v ~dv~dr \Big)~ds. 
\label{eq:regularisedIto3}
\end{align}

\vspace{0.3cm}

\underline{\emph{Step $2$: Convergence of the terms in 
\eqref{eq:regularisedIto1} and \eqref{eq:regularisedIto3}.}}
 
By using~\eqref{eq-delta-KH-carre} an easy calculation shows that 
$\sup_{s\in [0,T]} \EE 
\big|\mathcal{Y}^{H,\varepsilon}_{s} - \mathcal{Y}^{H}_{s}\big|^2$ 
converges to $0$ as $\varepsilon\to 0$. The convergence 
in probability of the terms in~\eqref{eq:regularisedIto1} follows.

We now prove the convergence of the trace 
term~\eqref{eq:regularisedIto3}. 

Lemma~\ref{lemma:conv-trace-term} proven below shows that there exists
a positive function $\Psi$  with finite integral on $(0,t)$ such that
\begin{align*}
\sup_{0<\varepsilon<1} 
\Big( \tfrac{1}{2}  K_H(s+\varepsilon,s)^2 + \int_0^s 
\partial_s K_H(s+\varepsilon,r)
~K_H(s+\varepsilon,r)~dr \Big)
\leq \Psi(s)
\end{align*}
and 
\begin{align*}
\forall s>0,
~\tfrac{1}{2} K_H(s+\varepsilon,s)^2 + \int_0^s 
\partial_s K_H(s+\varepsilon,r)~K_H(s+\varepsilon,r)~dr
\underset{\varepsilon\to 0}{\longrightarrow} 
Hs^{2H-1}.
\end{align*}
Therefore, Lebesgue's Dominated Convergence theorem implies that
the following a.s. convergence holds true:
$$ \int_0^t \partial_y^2 G(s,\mathcal{Y}^{H,\varepsilon}_s ) 
\Big( \tfrac{1}{2} K_H(s+\varepsilon,s)^2 + \int_0^s 
\partial_s K_H(s+\varepsilon,r)~K_H(s+\varepsilon,r)~dr
\Big)~ds \\
\underset{\varepsilon\to 0}{\longrightarrow} 
\int_0^t \partial_y^2 G(s,\mathcal{Y}^H_s )~H s^{2H-1}~ds . $$

We now turn to the last term in~\eqref{eq:regularisedIto3}. We have:
\begin{align*}
\EE&\left| \int_0^t \partial_y^2 G(s,\mathcal{Y}^H_s ) \int_0^s  
\left(\int_0^s \mathbf{D}_{r} \beta_v ~dv\right)  
\left(\partial_s K_H(s+\varepsilon,r) - 
\partial_s K_H(s,r)\right) ~dr  ~ds \right|\\
&\leq C~\int_0^t \int_0^s \left(\int_0^s \EE|\mathbf{D}_{r}\beta_v|^2 
~dv\right)^{\frac{1}{2}}  \left|\partial_{s} K_H(s+\varepsilon,r) - \partial_s K_H(s,r)\right| 
~dr  ~ds.
\end{align*}
Therefore, in view of Assumption \eqref{eq:AssumptionDrBeta}, Equality~ 
\eqref{eq:deriv-K_H} and Lebesgue's Dominated Convergence theorem, 
the following a.s. convergence holds:
\begin{align*}
&\int_0^t \partial_y^2 G(s,\mathcal{Y}^{H,\varepsilon}_s ) 
\int_0^s  \partial_s K_H(s+\varepsilon,r)
\Big(\int_0^s \mathbf{D}_{r} \beta_v ~dv\Big)~dr~ds \\
&\underset{\varepsilon\to 0}{\longrightarrow} 
\int_0^t \partial_y^2 G(s,\mathcal{Y}^H_s)  
\int_0^s \partial_s K_H(s,r)
\Big(\int_0^s \mathbf{D}_{r} \beta_v~dv\Big)~dr~ds.
\end{align*}

\vspace{0.3cm}

\underline{\emph{Step $3$: Convergence of the Skorokhod integral 
\eqref{eq:regularisedIto2}.}} 

Before proving the convergence of \eqref{eq:regularisedIto2}, we check 
that its potential limit is well defined. To this end, 
we notice that $\mathcal{Y}^H\in \mathbb{D}^{1,2}(|\HH|)$ since $B^H 
\in 
\mathbb{D}^{1,2}(|\HH|)$ and $\int_{0}^\cdot \beta_{s}\, ds \in 
\mathbb{D}^{1,2}(|\HH|)$ by assumption. 
This implies that $K^{*}_{H,t}[\indi{[0,t]} \partial_{y} 
G(\cdot,\mathcal{Y}^{H}_{\cdot} ) ] \in \text{dom}(\bm{\delta})$. 

To prove the convergence, we consider an arbitrary random variable 
$F\in \mathbb{D}^{1,2}$. We have:
\begin{align*}
&\bigg|\EE\bigg[F  \bigg(\bm{\delta} \Big(\indi{(0,t)}(\cdot) \Big\{
\partial_y G(\cdot,\mathcal{Y}^{H,\varepsilon}_\cdot ) 
K_H(\cdot+\varepsilon,\cdot) +\int_{\cdot}^t\partial_y 
G(r,\mathcal{Y}^{H,\varepsilon}_r)  
\partial_r K_H(r+\varepsilon,\cdot) \, dr \Big\}
~\Big) \\
&\hspace{1cm}
 -   \bm{\delta} \left( K^{*}_{H,t}[\indi{[0,t]}(\cdot)
  \partial_{y} G(\cdot,\mathcal{Y}^{H}_{\cdot} ) ] \right)
 \bigg)  \bigg] \bigg| \\
&= \bigg|\EE\bigg[F  \bigg(\bm{\delta} \Big(\indi{(0,t)}(\cdot) \Big\{
\partial_y G(\cdot,\mathcal{Y}^{H,\varepsilon}_\cdot ) 
K_H(t+\varepsilon,\cdot)
+
\int_{\cdot}^t \left(\partial_y 
G(r,\mathcal{Y}^{H,\varepsilon}_r) - \partial_y 
G(\cdot,\mathcal{Y}^{H,\varepsilon}_\cdot) \right)
\partial_r K_H(r+\varepsilon,\cdot) \, dr \Big\}
~\Big) \\
&\hspace{1cm}
 -   \bm{\delta} \left( K^{*}_{H,t}[\indi{[0,t]}(\cdot) \partial_{y} 
 G(\cdot,\mathcal{Y}^{H}_{\cdot} ) ] \right)
 \bigg)  \bigg] \bigg| \\
 &\leq \left|\EE\left[F \bm{\delta} \Big(\indi{(0,t)}(\cdot) \Big\{
\partial_y G(\cdot,\mathcal{Y}^{H,\varepsilon}_\cdot ) 
K_H(t+\varepsilon,\cdot) - 
\partial_y G(\cdot,\mathcal{Y}^{H}_\cdot ) 
K_H(t,\cdot) \Big\} \Big)\right] \right| \\
&\quad + \Big|\EE\Big[F \bm{\delta} \Big(\indi{(0,t)}(\cdot) \Big\{
\int_{\cdot}^t \left(\partial_y 
G(r,\mathcal{Y}^{H,\varepsilon}_r) - \partial_y 
G(\cdot,\mathcal{Y}^{H,\varepsilon}_\cdot) \right)
\partial_r K_H(r+\varepsilon,\cdot) \, dr\\
&\hspace{2cm} - 
\int_{\cdot}^t \left(\partial_y 
G(r,\mathcal{Y}^{H}_r) - \partial_y 
G(\cdot,\mathcal{Y}^{H}_\cdot) \right)
\partial_r K_H(r,\cdot) \, dr \Big\} \Big)\Big] \Big| \\
&=: A_{1} + A_{2}.
\end{align*}

Set
$$ \|\mathbf{D} F\|_{L^2}^2 
:= \EE \Big[ \int_0^t (\mathbf{D}_s F)^2~ds \Big]. $$
From the duality formula and the Cauchy-Schwarz inequality it results 
that
\begin{align*}
A_{1}^2 \leq \|\mathbf{D} F\|_{L^2}^2 \, \EE\left[\int_{0}^t \Big\{
\partial_y G(s,\mathcal{Y}^{H,\varepsilon}_s ) 
K_H(t+\varepsilon,s) - 
\partial_y G(s,\mathcal{Y}^{H}_s) 
K_H(t,s) \Big\}^2 ds \right]
\end{align*}
and
\begin{align*}
A_{2}^2 &\leq  \|\mathbf{D} F\|_{L^2}^2 \, \EE\Big[\int_{0}^t \Big\{
\int_{s}^t \left(\partial_y 
G(r,\mathcal{Y}^{H,\varepsilon}_r) - \partial_y 
G(s,\mathcal{Y}^{H,\varepsilon}_s) \right)
\partial_r K_H(r+\varepsilon,s) \, dr\\
&\hspace{2cm} - 
\int_{s}^t \left(\partial_y 
G(r,\mathcal{Y}^{H}_r) - \partial_y 
G(s,\mathcal{Y}^{H}_s) \right)
\partial_r K_H(r,s) \, dr \Big\}^2 ds \Big].
\end{align*}

\vspace{0.3cm}
As for $A_{1}$, introduce the term $\partial_y 
G(s,\mathcal{Y}^{H,\varepsilon}_s) K_H(t,s)$ to get
\begin{align*}
A_{1}^2 &\leq 2 \|\mathbf{D} F\|_{L^2}^2 \, \EE\left[\int_{0}^t \Big\{
\partial_y G(s,\mathcal{Y}^{H,\varepsilon}_s ) (
K_H(t+\varepsilon,s)-K_H(t,s)) \Big\}^2 ds \right] \\
&\quad + 2 \|\mathbf{D} F\|_{L^2}^2 \, \EE\left[\int_{0}^t \Big\{
(\partial_y G(s,\mathcal{Y}^{H,\varepsilon}_s ) 
- \partial_y G(s,\mathcal{Y}^{H}_s) )
K_H(t,s) \Big\}^2 ds \right] .
\end{align*}
For the first term in the right-hand side, we use the boundedness of 
$\partial_{y}G$ and observe that $\int_{0}^t 
\left(K_{H}(t+\varepsilon,r) - K_{H}(t,r)\right)^2 \, dr \leq 
\int_{0}^{t+\varepsilon} \left(K_{H}(t+\varepsilon,r) - 
K_{H}(t,r)\right)^2 \, dr = \epsilon^{2H}$. The Dominated Convergence 
theorem implies that the second term tends to~0.
Therefore, $A_{1}$ tends to $0$.

\vspace{0.3cm}
As for $A_{2}$, introduce the term $\int_{s}^t \big(\partial_y 
G(r,\mathcal{Y}^{H,\varepsilon}_r) - \partial_y 
G(s,\mathcal{Y}^{H,\varepsilon}_s) \big)
\partial_r K_H(r,s) \, dr$ to get
\begin{align*}
A_{2}^2 &\leq 2  \|\mathbf{D} F\|_{L^2}^2 \, \EE\Big[\int_{0}^t \Big\{
\int_{s}^t \left(\partial_y 
G(r,\mathcal{Y}^{H,\varepsilon}_r) - \partial_y 
G(s,\mathcal{Y}^{H,\varepsilon}_s) \right)
\Big(\partial_r K_H(r+\varepsilon,s)-\partial_r K_H(r,s)\Big) \, 
dr\Big\}^2 ds \Big]\\
& +2  \|\mathbf{D} F\|_{L^2}^2 \, \EE\Big[\int_{0}^t \Big\{
\int_{s}^t \left(\partial_y 
G(r,\mathcal{Y}^{H}_r) - \partial_y 
G(s,\mathcal{Y}^{H}_s) -\partial_y 
G(r,\mathcal{Y}^{H,\varepsilon}_r) + \partial_y 
G(s,\mathcal{Y}^{H,\varepsilon}_s) \right)
\partial_r K_H(r,s) \, dr \Big\}^2 ds \Big].
\end{align*}
For any bounded measurable process~$(\mathcal{G}_r)$ and any
deterministic positive integrable function~$\mathcal{K}$
Minkowski's integral inequality implies that
\begin{equation} \label{ineq:Minkowski}
 \forall 0<s<t,~~~
\EE\Big(\int_s^t \mathcal{G}_r~\mathcal{K}(r)~dr\Big)^2
\leq \Big(\int_s^t \sqrt{\EE [ 
(\mathcal{G}_r~\mathcal{K}(r))^2 ]}~dr\Big)^2
= \Big(\int_s^t \sqrt{\EE[(\mathcal{G}_r)^2]}~\mathcal{K}(r)~dr\Big)^2. 
\end{equation}
It follows that
\begin{align*}
A_{2}^2 &\leq 2  \|\mathbf{D} F\|_{L^2}^2 
~\int_{0}^t \Big\{ \int_s^t \sqrt{\EE|\partial_y 
G(r,\mathcal{Y}^{H,\varepsilon}_r) - \partial_y 
G(s,\mathcal{Y}^{H,\varepsilon}_s)|^2} 
~\Big|\partial_r K_H(r+\varepsilon,s)
-\partial_r K_H(r,s)\Big| \, dr\Big\}^2 ds \\
& +2  \|\mathbf{D} F\|_{L^2}^2
~\int_0^t \Big\{ \int_s^t \sqrt{ \EE|\partial_y 
G(r,\mathcal{Y}^{H}_r) - \partial_y G(s,\mathcal{Y}^{H}_s) 
-\partial_y G(r,\mathcal{Y}^{H,\varepsilon}_r) 
+ \partial_y G(s,\mathcal{Y}^{H,\varepsilon}_s)|^2 }
~\Big|\partial_r K_H(r,s) \Big| \, dr \Big\}^2 ds \\
&=: A_{2,1}^2 + A_{2,2}^2.
\end{align*}
The Lipschitz property of $\partial_{y}G$ implies that 
$(\EE|\partial_y G(r,\mathcal{Y}^{H,\varepsilon}_r) - \partial_y 
G(s,\mathcal{Y}^{H,\varepsilon}_s)|^2 )^{\frac{1}{2}} \leq C 
{(\EE|B^{H,\varepsilon}_r - B^{H,\varepsilon}_s|^2 )^{\frac{1}{2}}}$. 
In addition, the definition of $B^{H,\varepsilon}$ yields that 
\begin{equation*}
\begin{split}
\EE|B^{H,\varepsilon}_r - B^{H,\varepsilon}_s|^2 
&= \int_{0}^r \left(K_{H}(r+\varepsilon,u) 
- K_{H}(s+\varepsilon,u)\right)^2 du \\
&\leq \int_{0}^{r+\varepsilon} \left(K_{H}(r+\varepsilon,u) - 
K_{H}(s+\varepsilon,u)\right)^2 du  = (r-s)^{2H}.
\end{split}
\end{equation*}
 Therefore,
\begin{align*}
A_{2,1}^2 \leq C\, \int_{0}^t \Big\{
\int_{s}^t (r-s)^H 
\Big|\partial_r K_H(r+\varepsilon,s)-\partial_r K_H(r,s)\Big| \, 
dr\Big\}^2 ds .
\end{align*}
By using the inequalities~\eqref{eq:deriv-K_H-simplifiee-H-petit}, 
\eqref{eq:deriv-K_H-simplifiee-H-grand} in Appendix and the Dominated 
Convergence theorem we conclude that $A_{2,1}$ converges to $0$.

\vspace{0.3cm}
To prove the convergence of $A_{2,2}$, we first observe that
\begin{align*}
&\EE|\partial_y G(r,\mathcal{Y}^{H}_r) 
- \partial_y G(s,\mathcal{Y}^{H}_s) 
-\partial_y G(r,\mathcal{Y}^{H,\varepsilon}_r) 
+ \partial_y G(s,\mathcal{Y}^{H,\varepsilon}_s)|^2 \\
&\hspace{1cm}\leq C~\EE|B^{H}_r - B^{H,\varepsilon}_r |^2 
+C~\EE|B^{H}_s - B^{H,\varepsilon}_s|^2,
\end{align*}
which obviously converges to $0$ for any $r$ and $s$. 
Second, we notice that
\begin{align*}
&\EE|\partial_y 
G(r,\mathcal{Y}^{H}_r) - \partial_y G(s,\mathcal{Y}^{H}_s) -\partial_y 
G(r,\mathcal{Y}^{H,\varepsilon}_r) + \partial_y 
G(s,\mathcal{Y}^{H,\varepsilon}_s)|^2 \\
&\hspace{1cm} \leq C~\EE|B^{H}_r - B^{H}_s |^2  
+C~\EE|B^{H,\varepsilon}_r - B^{H,\varepsilon}_s|^2 \\
&\hspace{1cm} \leq C (r-s)^{2H}.
\end{align*}
By using~\eqref{eq:deriv-K_H-simplifiee-H-petit}, 
\eqref{eq:deriv-K_H-simplifiee-H-grand} and the Dominated Convergence 
theorem we conclude that $A_{2,2}$ converges to $0$. 

We have thus obtained that $A_{1}+A_{2}$ converges to $0$.
Therefore, the term~\eqref{eq:regularisedIto2} weakly converges 
to $ \bm{\delta}( K^{*}_{H,t}[\indi{[0,t]} \partial_{y} 
G(\cdot,\mathcal{Y}^{H}_{\cdot} ) ])$.
\end{proof}

In the second step of the preceding proof we have used the following 
result.

\begin{lemma} \label{lemma:conv-trace-term}
It holds that 
\begin{multline} \label{ineq:trace-term-varepsilon}
\sup_{0<\varepsilon<1}
\Big( \tfrac{1}{2}  K_H(s+\varepsilon,s)^2 + \int_0^s 
\partial_s K_H(s+\varepsilon,r)
~K_H(s+\varepsilon,r)~dr \Big) \\
\leq 
\begin{cases}
C~(1+s^{1-2H}+s^{\frac{1}{2}-H})~(s+1)^{3H-\frac{3}{2}}
~~~\text{when}~H>\tfrac{1}{2}, \\
C~(1+s^{H-1}+s^{2H-1})~~~\text{when}~H<\tfrac{1}{2},
\end{cases}
\end{multline}
and 
\begin{equation} \label{limit:trace-term-varepsilon}
\forall s>0,
~\tfrac{1}{2}  K_H(s+\varepsilon,s)^2 + \int_0^s 
\partial_s K_H(s+\varepsilon,r)~K_H(s+\varepsilon,r)~dr
\underset{\varepsilon\to 0}{\longrightarrow}
Hs^{2H-1}.
\end{equation}
\end{lemma}

\begin{proof}
We have that
\begin{align*}
&\tfrac{1}{2}~K_{H}(s+\varepsilon,s)^2 + \int_{0}^s \partial_s
K_{H}(s+\varepsilon,r)\, K_{H}(s+\varepsilon,r)\, dr \\
&~~~= \tfrac{1}{2}~\chi_{H}^2 
\left(\frac{s+\varepsilon}{s}\right)^{2H-1} \varepsilon^{2H-1} 
+ \tfrac{1}{2}~(H-\tfrac{1}{2})^2~\chi_{H}^2~s^{1-2H} 
\left(\int_{s}^{s+\varepsilon} \theta^{H-\frac{3}{2}} 
(\theta-s)^{H-\frac{1}{2}}\, d\theta\right)^2  \\
&~~~\quad- \chi_{H}^2~(H-\tfrac{1}{2})
~\left(\frac{s+\varepsilon}{s}\right)^{H-\frac{1}{2}} 
~\varepsilon^{H-\frac{1}{2}}~s^{\frac{1}{2}-H}
~\int_{s}^{s+\varepsilon} \theta^{H-\frac{3}{2}}
(\theta-s)^{H-\frac{1}{2}}~d\theta \\
&~~~\quad + (H-\tfrac{1}{2}) 
~\chi_{H}^2~\int_{0}^s \left(\frac{s+\varepsilon}{r}\right)^{2H-1} 
(s+\varepsilon-r)^{2H-2}~dr \\
&~~~\quad - (H-\tfrac{1}{2})^2~\chi_{H}^2\, 
(s+\varepsilon)^{H-\frac{1}{2}} \int_{0}^s r^{1-2H} \, 
(s+\varepsilon-r)^{H-\frac{3}{2}} \int_{r}^{s+\varepsilon} 
\theta^{H-\frac{3}{2}} (\theta-r)^{H-\frac{1}{2}} \, d\theta\, dr \\
&~~~=: A_{1}(\varepsilon) + A_{2}(\varepsilon) 
- A_{3}(\varepsilon) + A_{4}(\varepsilon) - A_5(\varepsilon).
\end{align*}

\vspace{0.2cm}

\underline{Proof of~\eqref{ineq:trace-term-varepsilon}: The case 
$H>\tfrac{1}{2}$.} 

In that case, for any $0<\varepsilon<1$ we obviously have
$$ A_1(\varepsilon) + A_2(\varepsilon)
\leq C~s^{1-2H}~(s+1)^{2H-1} + C~s^{1-2H}~\varepsilon^{2H-1}
~\left(\int_s^{s+\varepsilon} \theta^{H-\frac{3}{2}} 
~d\theta\right)^2 \leq C~s^{1-2H}~(s+1)^{2H-1}. $$
Similarly,
$$ A_3(\varepsilon)
\leq C~\left(\frac{s+1}{s}\right)^{H-\frac{1}{2}}
\varepsilon^{2H-1}~s^{\frac{1}{2}-H}
~\int_{s}^{s+\varepsilon} 
\theta^{H-\frac{3}{2}}~d\theta \leq C~s^{1-2H}~(s+1)^{2H-1}. $$
As for $A_4(\varepsilon)$, we have
\begin{equation*} 
\begin{split}
A_4(\varepsilon) 
&\leq C~(s+1)^{2H-1}~\int_0^s r^{1-2H}~(s-r)^{2H-2}~dr \\
&= C~(s+1)^{2H-1}~s~\int_0^1 (s\theta)^{1-2H}~(s-s\theta)^{2H-2} 
~d\theta \\
&= C~(s+1)^{2H-1}. 
\end{split}
\end{equation*}
As for $A_5(\varepsilon)$ we have that
\begin{equation*}
\begin{split}
A_5(\varepsilon) &\leq C~(s+1)^{H-\frac{1}{2}} \int_0^s r^{1-2H} 
~(s-r)^{H-\frac{3}{2}} \int_r^{s+\varepsilon} 
\theta^{H-\frac{3}{2}}~\theta^{H-\frac{1}{2}}~d\theta~dr \\
&\leq C~(s+1)^{3H-\frac{3}{2}}~\int_0^s r^{1-2H}
~(s-r)^{H-\frac{3}{2}}~dr \\
&= C~(s+1)^{3H-\frac{3}{2}}~s
~\int_0^1 (s\theta)^{1-2H}~(s-s\theta)^{H-\frac{3}{2}}~d\theta \\
&= C~s^{\frac{1}{2}-H}~(s+1)^{3H-\frac{3}{2}}.
\end{split}
\end{equation*}
To summarize the preceding calculations, when $H>\tfrac{1}{2}$ one has
$$ \sum_{i=1}^5 A_i(\varepsilon) 
\leq C~(1+s^{1-2H}+s^{\frac{1}{2}-H})~(s+1)^{3H-\frac{3}{2}}. $$

\vspace{0.2cm}

\underline{Proof of~\eqref{ineq:trace-term-varepsilon}: The case 
$H<\tfrac{1}{2}$.} 

In that case, we estimate the sum
$A_1(\varepsilon)+A_4(\varepsilon)$. Notice that both $A_1(\varepsilon)$
and $A_4(\varepsilon)$ are unbounded when $\varepsilon$ tends to~0
and that $A_4(\varepsilon)$ is negative when $H<\tfrac{1}{2}$.

From the equality
\begin{align*}
\tfrac{1}{2} \varepsilon^{2H-1} = \tfrac{1}{2} (s+\varepsilon)^{2H-1} 
- (H-\tfrac{1}{2}) \int_{0}^s (s+\varepsilon-r)^{2H-2}~dr
\end{align*}
we get that
\begin{equation} \label{eq:A3-varepsilon}
\begin{split}
A_1(\varepsilon)+A_4(\varepsilon) 
&= \chi_{H}^2\, (s+\varepsilon)^{2H-1} 
\left( \tfrac{1}{2} \Big(\frac{s+\varepsilon}{s}\Big)^{2H-1} \right. \\
&\quad \left. + (H-\tfrac{1}{2}) 
\int_0^s (s+\varepsilon-r)^{2H-2} (r^{1-2H} - s^{1-2H})~dr \right).
\end{split}
\end{equation}
For any $0<\varepsilon<1$ one thus has
\begin{equation*}
\begin{split}
|A_1(\varepsilon)+A_4(\varepsilon)|  &\leq C~s^{2H-1}
\left( 1 + \int_0^s (s-r)^{2H-2}~(s^{1-2H} - r^{1-2H})~dr \right) \\
&\leq C~s^{2H-1}~(1+ s~\int_0^1 (s-s\theta)^{2H-2} 
 (s^{1-2H}-(s\theta)^{1-2H})~d\theta\Big) \\
&= C~s^{2H-1}~\Big( 
1 + \int_0^1 (1-\theta)^{2H-2}~(1-\theta^{1-2H})~d\theta \Big) \\
&\leq C~s^{2H-1},
\end{split}
\end{equation*}
since $0< 1-\theta^{1-2H} < 1 - \theta$ for all $0<\theta<1$ and 
$0<H<\frac{1}{2}$.

As for $A_2(\varepsilon)$, we use the inequality
$$ \forall 0<H<\tfrac{1}{2},~\forall 0<s<\theta,
~~~\theta^{H-\frac{3}{2}} 
\leq \theta^{H-\frac{3}{2}+\frac{1}{2}+\frac{H}{2}} 
~(\theta-s)^{-\frac{1}{2}-\frac{H}{2}} \leq 
s^{\frac{3H}{2}-1}~(\theta-s)^{-\frac{1}{2}-\frac{H}{2}} $$
to get
$$ A_2(\varepsilon) \leq C~s^{H-1}~\varepsilon^H < C~s^{H-1}. $$

As for $A_3(\varepsilon)$ we rather use the inequality
$$ \forall 0<H<\tfrac{1}{2},~\forall 0<s<\theta,
~~~\theta^{H-\frac{3}{2}} 
\leq \theta^{H-\frac{3}{2}+H} 
~(\theta-s)^{-H} \leq 
s^{2H-\frac{3}{2}}~(\theta-s)^{-H} $$
to get
$$ A_3(\varepsilon)
\leq C~\varepsilon^{2H-1}~s^{\frac{1}{2}-H}
~\int_{s}^{s+\varepsilon} 
\theta^{H-\frac{3}{2}}~d\theta \leq C~s^{H-1}~\varepsilon^H
\leq C~s^{H-1}. $$

As for $A_5(\varepsilon)$ we have that
\begin{equation*}
\begin{split}
A_5(\varepsilon) &\leq C~s^{H-\frac{1}{2}} \int_0^s r^{1-2H} 
~(s+\varepsilon-r)^{H-\frac{3}{2}} \int_r^{s+\varepsilon} 
r^{H-\frac{3}{2}}~(\theta-r)^{H-\frac{1}{2}}~d\theta~dr \\
&= C~s^{H-\frac{1}{2}}~\int_0^s r^{-H-\frac{1}{2}}
~(s+\varepsilon-r)^{2H-1}~dr \\
&\leq C~s^{H-\frac{1}{2}}~\int_0^s r^{-H-\frac{1}{2}}
~(s-r)^{2H-1}~dr \\
&= C~s^{2H-1}
~\int_0^1 \theta^{-\frac{1}{2}-H}~(1-\theta)^{2H-1}~d\theta \\
&= C~s^{2H-1}.
\end{split}
\end{equation*}

To summarize the preceding calculations, when $H<\tfrac{1}{2}$ one has
$$ \sum_{i=1}^5 A_i(\varepsilon) 
\leq C~(1+s^{H-1}+s^{2H-1}). $$

\vspace{0.2cm}

\underline{Proof of~\eqref{limit:trace-term-varepsilon}: The case 
$H>\tfrac{1}{2}$.} 

In that case, $A_i(\varepsilon)$ obviously tends to~0 with 
$\varepsilon$ for $i=1,2,3$.

In addition, notice that $A_4(\varepsilon)$ tends to
\begin{equation*}
(H-\tfrac{1}{2})~\chi_H^2~\int_0^s s^{2H-1}~r^{1-2H}~(s-r)^{2H-2}~dr
= (H-\tfrac{1}{2})~\chi_H^2~s^{2H-1}
\int_0^1 \gamma^{1-2H}~(1-\gamma)^{2H-2}~d\gamma.
\end{equation*}

Now, observe that $A_5(\varepsilon)$ tends to
$$ (H-\tfrac{1}{2})^2~\chi_{H}^2 
~s^{H-\frac{1}{2}} \int_{0}^s r^{1-2H}
~(s-r)^{H-\frac{3}{2}} \int_{r}^s 
\theta^{H-\frac{3}{2}}~(\theta-r)^{H-\frac{1}{2}}~d\theta~dr. $$
Use the change of variables $\theta=\tfrac{r}{\alpha}$. The above 
expression becomes
$$ (H-\tfrac{1}{2})^2~\chi_{H}^2 
~s^{H-\frac{1}{2}}~\int_0^s (s-r)^{H-\frac{3}{2}}
~\int_{\frac{r}{s}}^1 \alpha^{-2H}~(1-\alpha)^{H-\frac{1}{2}}
~d\alpha~dr. 
$$
Now, use the change of variables $r=s\gamma$. It comes:
$$ (H-\tfrac{1}{2})^2~\chi_{H}^2 
~s^{2H-1} \int_0^1 (1-\gamma)^{H-\frac{3}{2}}
\int_\gamma^1 \alpha^{-2H}~(1-\alpha)^{H-\frac{1}{2}}
~d\alpha~d\gamma. $$
By integrating by parts the inner integral we finally get that  
$A_5(\varepsilon)$ tends to
$$ \tfrac{1}{2}(H-\tfrac{1}{2})~\chi_{H}^2~s^{2H-1}
~\Big( \int_0^1 \gamma^{1-2H}~(1-\gamma)^{2H-2}~d\gamma
- (H-\tfrac{1}{2})\int_0^1 (1-\gamma)^{H-\frac{3}{2}}
\int_\gamma^1 \alpha^{1-2H}~(1-\alpha)^{H-\frac{3}{2}}
~d\alpha~d\gamma \Big). $$
From
\begin{multline*}
\int_0^1 (1-\gamma)^{H-\frac{3}{2}}
\int_\gamma^1 \alpha^{1-2H}~(1-\alpha)^{H-\frac{3}{2}}
~d\alpha~d\gamma \\
= \tfrac{1}{H-\tfrac{1}{2}}~\Big( 
\int_0^1 \alpha^{1-2H}~(1-\alpha)^{H-\frac{3}{2}}~d\alpha
-\int_0^1 \gamma^{1-2H}~(1-\gamma)^{2H-2}~d\gamma \Big)
\end{multline*}
it results that $A_4(\varepsilon)-A_5(\varepsilon)$ tends to
$$ \tfrac{1}{2}~(H-\tfrac{1}{2})~\chi_{H}^2~s^{2H-1}
~\int_0^1 \alpha^{1-2H}~(1-\alpha)^{H-\frac{3}{2}}~d\alpha. $$
By using~\eqref{def:chi-H} and standard properties of Beta and Gamma 
functions
we finally get that the preceding limit is equal to
$$ \tfrac{1}{2}~(H-\tfrac{1}{2})~\chi_{H}^2~s^{2H-1} 
\frac{\Gamma(2-2H)~\Gamma(H-\tfrac{1}{2})}{\Gamma(\tfrac{3}{2}-H)}
=H~s^{2H-1}~\frac{(H-\tfrac{1}{2})
~\Gamma(H-\tfrac{1}{2})}{\Gamma(H+\tfrac{1}{2})}
= H~s^{2H-1}. $$

\vspace{0.2cm}

\underline{Proof of~\eqref{limit:trace-term-varepsilon}: The case 
$H<\tfrac{1}{2}$.}

In that case, $A_i(\varepsilon)$ and obviously tends to~0 with 
$\varepsilon$ for $i=2,3$.

In addition, notice that
\begin{equation} 
A_1(\varepsilon) + A_4(\epsilon) \to \chi_{H}^2\, s^{2H-1} 
\Big( \tfrac{1}{2}  + (H-\tfrac{1}{2}) \int_{0}^s (s-r)^{2H-2} 
\Big(r^{1-2H} - s^{1-2H}\Big) dr \Big).
\end{equation}
By using the change of variables $r=s\theta$ and then integrating by 
parts we transform the right-hand side into
\begin{align} \label{limit-A1+A4}
\chi_{H}^2\, s^{2H-1} & \Big( \tfrac{1}{2}  + (H-\tfrac{1}{2}) 
\int_{0}^1 (1-u)^{2H-2} (u^{1-2H} - 1) du \Big) \nonumber \\
&= \chi_{H}^2\, s^{2H-1} \Big( \tfrac{1}{2}  + (H-\tfrac{1}{2}) 
\Big(\tfrac{1}{1-2H} - \int_{0}^1 (1-u)^{2H-1} \, u^{-2H} \, du\Big)  
\Big) \nonumber \\
&=  \chi_{H}^2\, s^{2H-1}\, (\tfrac{1}{2}-H) \int_{0}^1 (1-u)^{2H-1} \, 
u^{-2H} \, du .
\end{align}

We now observe that
\begin{align*}
-A_5(\varepsilon) \to  - (H-\tfrac{1}{2})^2\, \chi_{H}^2\, 
s^{H-\frac{1}{2}} \int_{0}^s r^{1-2H} \, (s-r)^{H-\frac{3}{2}} 
\int_{r}^{s} \theta^{H-\frac{3}{2}} (\theta-r)^{H-\frac{1}{2}} \, 
d\theta\, dr.
\end{align*}
By using the change of variables $(r,\theta) = (su,s\gamma)$ we 
transform the right-hand side into
\begin{align*}
- (H-\tfrac{1}{2})^2\, \chi_{H}^2\, s^{2H-1} \int_{0}^1 u^{1-2H} 
(1-u)^{H-\frac{3}{2}} \int_{u}^1 \gamma^{H-\frac{3}{2}} 
(\gamma-u)^{H-\frac{1}{2}} \, d\gamma\, du .
\end{align*}
Now, use the change of variables $v= \frac{u}{\gamma}$ and then 
integrate by parts to obtain the new value 
\begin{align*}
&- (H-\tfrac{1}{2})^2\, \chi_{H}^2\, s^{2H-1} \int_{0}^1  
(1-u)^{H-\frac{3}{2}} \int_{u}^1 v^{-2H} (1-v)^{H-\frac{1}{2}} \, dv\, 
du \\
&= - (H-\tfrac{1}{2})\, \chi_{H}^2\, s^{2H-1} \left( \int_{0}^1 v^{-2H} 
\, (1-v)^{H-\frac{1}{2}} dv - \int_{0}^1 u^{-2H} \, (1-u)^{2H-1} du 
\right).
\end{align*}

Combining the preceding equality with~\eqref{limit-A1+A4},
\eqref{def:chi-H} and standard properties of the beta function leads to
\begin{align*}
A_1(\varepsilon) + A_4(\varepsilon) - A_5(\varepsilon) &\to 
\chi_{H}^2~s^{2H-1}~(\tfrac{1}{2}-H) 
\int_{0}^1 v^{-2H} \, (1-v)^{H-\frac{1}{2}} dv \\
&= s^{2H-1} \, \frac{2H\, 
\Gamma(\tfrac{3}{2}-H)}{\Gamma(H+\tfrac{1}{2})~\Gamma(2-2H)}
~(\tfrac{1}{2}-H)~\frac{\Gamma(1-2H)~\Gamma(H+\tfrac{1}{2})}
{\Gamma(\tfrac{3}{2}-H)}.
\end{align*}
It remains to use that $ (1-2H) \Gamma(1-2H) = \Gamma(2-2H)$ to 
conclude that
$$ A_1(\varepsilon) + A_4(\varepsilon) - A_5(\varepsilon) 
\to  H s^{2H-1}. $$
\end{proof}

\subsection{The Itô--Skorokhod formula for the Lamperti 
process $Y^H$} \label{sec:Ito-formula}

Let $Y^H$ be as above. In order to be in a position to apply
the It\^o-Skorokhod formula~\eqref {eq:ItoSkorokhodFormula0} we need to 
check that~$Y^H$ belongs to $\mathbb{D}^{1,2}(|\HH|)$. This property 
seems to be well-known for the fractional Brownian motion of Hurst 
parameter $H>\frac{1}{4}$, but we could not find a proof.

\begin{proposition}\label{lem:YHinD12}
For any $H\in (\tfrac{1}{4},1)\setminus\{\frac{1}{2}\}$, $Y^H\in 
\mathbb{D}^{1,2}(|\HH|)$. 
\end{proposition}

\begin{proof}

When $H>\tfrac{1}{2}$ one obviously has
$\int_0^T\int_0^T |s-t|^{2H-2}ds~dt < \infty$.
Therefore, in view of~\eqref{eq:defAbsH} and Proposition 
\ref{lem:HolderFSDE}, 
the inequality~\eqref{eq:defD12|H|+} holds true, which means that 
$Y^H\in \mathbb{D}^{1,2}(|\HH|)$.

We now treat the case $H\in (\tfrac{1}{4},\tfrac{1}{2})$. 
In view of~\eqref{eq:defAbsH<} and~(\ref{eq:defD12|H|}) we need to 
check that
\begin{equation}\label{eq:integral_D12|H|}
\EE \int_0^T \int_0^T 
\left(\int_t^T |\mathbf{D}_r Y^H_t - \mathbf{D}_r Y^H_s| 
(s-t)^{H-\frac{3}{2}}~ds\right)^2 dt ~ dr<\infty.
\end{equation} 
It suffices to prove that $A$ and $B$ are finite, where
$$ A := \int_0^T \int_0^r \left(\int_t^T |\mathbf{D}_r Y^H_t - 
\mathbf{D}_r Y^H_s| 
(s-t)^{H-\frac{3}{2}}~ds\right)^2 dt ~ dr $$
and
$$ B := \int_0^T \int_r^T \left(\int_t^T |\mathbf{D}_r Y^H_t - 
\mathbf{D}_r Y^H_s| 
(s-t)^{H-\frac{3}{2}} ds\right)^2 dt ~ dr. $$

Use~\eqref{D_rY} to get
\begin{equation*}
\begin{split}
A &= \int_0^T \int_0^r \left(\int_r^T |\mathbf{D}_r Y^H_s| 
(s-t)^{H-\frac{3}{2}} ds\right)^2 dt ~ dr \\
&\leq C\int_0^T \int_0^r \left(\int_r^T (s-t)^{H-\frac{3}{2}} 
\left( |K_H(s,r)| + (s-r)^{H+\frac{1}{2}} \right)
ds\right)^2 dt ~ dr.
\end{split}
\end{equation*}
Observe that, for any $t<r<T$,
$$ 
\int_0^r \left(\int_r^T(s-t)^{H-\frac{3}{2}}(s-r)^{2H+1}~ds\right)^2dt
\leq C\int_0^r \Big(\int_r^T(s-t)^{H-\frac{3}{2}}ds\Big)^2dt
\leq C\int_0^r(T-t)^{2H-1}~dt \leq C. $$
Therefore,
$$ A \leq C + C \int_0^T \int_0^r \Big(\int_r^T 
(s-t)^{H-\frac{3}{2}} ~ |K_H(s,r)|~ds\Big)^2 dt ~ dr. $$
As $(\frac{s}{r})^{H-\frac{1}{2}}<1$ for $s>r$ and $H<\frac{1}{2}$, the 
change of variables $\gamma = \frac{\theta}{r}$ in~\eqref{def:KH}
leads to
\begin{equation}\label{eq:boundKH<1/2} 
\begin{split} 
|K_H(s,r)| &\leq
C\Big\{(s-r)^{H-\frac{1}{2}} + r^{H-\frac{1}{2}} 
\int_1^{\infty}
\gamma^{H-\frac{3}{2}} (\gamma-1)^{H-\frac{1}{2}}~d\gamma\Big\} \\ 
&\leq C~(s-r)^{H-\frac{1}{2}} + C_H~r^{H-\frac{1}{2}}.
\end{split} 
\end{equation} 
It follows that 
$$ A \leq C_H +C \int_0^T \int_0^r \Big(\int_r^T
(s-t)^{H-\frac{3}{2}} ~ (s-r)^{H-\frac{1}{2}}~ds\Big)^2 dt ~
dr . $$ 
Applying Hölder's inequality with $p=\tfrac{4}{3}$ and
$q=4$ one gets 
\begin{align*} A&\leq C_H +C \int_0^T \int_0^r
\Big(\int_r^T 
(s-t)^{\frac{4}{3}(H-\frac{3}{2})} ds\Big)^{\frac{3}{2}} 
\Big(\int_r^T (s-r)^{4H-2}~ds\Big)^{\frac{1}{2}} dt~dr. 
\end{align*} 
The right-hand side is finite for $H>\tfrac{1}{4}$.

\vspace{0.4cm}
We now aim to prove that $B<\infty$.

By using~\eqref{ineq:diff-D_rY} we get
\begin{equation*} 
\begin{split}
B &\leq C \int_0^T \int_r^T \left(\int_t^T
(s-t)^{H-\frac{1}{2}} \Big(|K_H(s,r)| +
(s-r)^{H+\frac{1}{2}}\Big)~ds \right)^2 dt ~ dr \\
&\quad + C\int_0^T \int_r^T \Big( \int_t^T 
(s-t)^{H-\frac{3}{2}}|K_H(s,r) - K_H(t,r)|~ds\Big)^2 dt ~ dr \\
&:= B_1 + B_2.
\end{split}
\end{equation*} 

The inequality~\eqref{eq:boundKH<1/2} implies that 
\begin{align*}
B_1 &\leq C + \int_0^T \int_r^T \Big( \int_t^T 
(s-t)^{H-\frac{1}{2}}(s-r)^{H-\frac{1}{2}}ds \Big)^2 dt~dr  \\
&\leq C + \int_0^T \int_r^T \Big(\int_t^T(s-t)^{2H-1}~ds \Big)^2
~dt~dr < \infty.
\end{align*}

Now, as 
$(\frac{\theta}{r})^{H-\frac{1}{2}}<1$ for $r<t\leq \theta\leq s$ and 
$H<\frac{1}{2}$, in view of~\eqref{eq:deriv-K_H} we have
$$ |K_H(s,r) - K_H(t,r)| 
\leq C \int_t^s(\theta-r)^{H-\frac{3}{2}}~d\theta
= C \int_t^s(\theta-r)^{H-\frac{3}{4}}~(\theta-r)^{-\frac{3}{4}}~d\theta
\leq (t-r)^{H-\frac{3}{4}}~\int_t^s (\theta-r)^{-\frac{3}{4}}~d\theta. 
$$
By using the H\"{o}lder continuity of the function $x^{\frac{1}{4}}$ 
we get
$$ \int_t^T (s-t)^{H-\frac{3}{2}}|K_H(s,r) - K_H(t,r)|~ds 
\leq C~(t-r)^{H-\frac{3}{4}}~\int_t^T (s-t)^{H-\frac{5}{4}}~ds. $$
As $H>\frac{1}{4}$ we deduce that $B_2$ is finite and thus $B$ also is 
finite.
\end{proof}

To conclude this section, we combine the 
propositions~\ref{prop:ItoSkorokhod}
and~\ref{lem:YHinD12} to get the It\^o--Skorokhod formula for functions 
of~$Y^H_t$.

\begin{theorem}
For all $H\in(\tfrac{1}{4},1)\setminus\{\frac{1}{2}\}$, $0\leq t\leq T$ 
and $G\in \mathcal{C}^{1,2}_b([0,T]\times \R)$ one has
\begin{equation} \label{eq:ItoSkorokhodFormula}
\begin{split}
G(t,Y^H_t) &= G(0,Y^H_0) + \int_0^t \left(\partial_s G(s,Y^H_s) + 
\widetilde{b}(Y^H_s)~\partial_y G(s,Y^H_s) \right) ds \\
&\quad\quad + \delta_H^{(T)}\left(\indi{[0,t]}(\sbullet) 
\partial_y 
G(\sbullet,Y^H_\sbullet) \right)
+ \Tr\left[ D^H \partial_y G(\sbullet,Y^H_\sbullet)\right]_t,
\end{split}
\end{equation}
where
\begin{equation} \label{eq:formula-Tr}
\Tr\left[D^H \partial_y G(\cdot,Y^H_\cdot)\right]_t = 
\int_0^t \partial^2_yG(s,Y^H_s)\left( 
Hs^{2H-1} + \int_0^s \partial_s K_H(s,r) 
\int_{0}^s\mathbf{D}_{r}\widetilde{b}(Y^H_{v})\, dv ~dr \right) ds.
\end{equation}
\end{theorem}

\begin{proof}
This is a direct consequence of Proposition \ref{prop:ItoSkorokhod} 
applied to $\beta_{\cdot} = b(Y^H_{\cdot})$, provided that $(\int_{0}^t 
b(Y^H_{s})\, ds,~t\in [0,T])$ is in $\mathbb{D}^{1,2}(|\HH|)$ and that 
\eqref{eq:AssumptionDrBeta} is satisfied. By Proposition 
\ref{lem:YHinD12}, the processes $Y^H$ and $B^H$ belong 
to~$\mathbb{D}^{1,2}(|\HH|)$. Hence, so does $(\int_{0}^t b(Y^H_{s})\, 
ds,~t\in [0,T])$. Finally, one easily deduce the 
inequality~\eqref{eq:AssumptionDrBeta} from~\eqref{D_rY}.
\end{proof}

\section{The smooth functional case: Sensitivity of time marginal 
distributions}\label{sec:fracDiff}

The aim of this section is to prove the following proposition which 
precises the weak convergence result of \cite{JolisViles} for the 
process $X^H$ when $H\rightarrow \tfrac{1}{2}$ by giving a convergence 
rate.

\begin{proposition} \label{th:gapLawXt^H}
Let $X^H$ and $\mathbf{X}$ be the solutions to (\ref{eq:fSDE_0}) and 
(\ref{eq:SDE_0}) respectively. Suppose that $b$ and $\sigma$ satisfy
the hypotheses~\ref{hyp:h1'} and \ref{hyp:h2'}, and $\varphi \in \Cb^{2+\beta}$ for some $\beta>0$. Then, for any 
$T>0$, there exists $C>0$ such that
\begin{equation*}
\forall H\in (\tfrac{1}{4},1),~~\sup_{t\in [0,T]} 
|\EE \varphi(X_t^H) 
- \EE \varphi(\mathbf{X}_t)| \leq C\ |H-\tfrac{1}{2}|.
\end{equation*}
\end{proposition}

\begin{remark}
The convergence rate in Proposition~\ref{th:gapLawXt^H} is optimal.
Indeed, a Taylor expansion of the function $H\mapsto t^{2H-1}$ shows 
that there exists $C>0$ satisfying
$$ \forall t>0,~\forall H\in(\tfrac{1}{4},1),
~~|\EE((B_t^H)^2) - \EE((\mathbf{B}_t)^2) 
- 2(H-\tfrac{1}{2})~t~\log(t)| 
\leq C~(H-\tfrac{1}{2})^2~(1+t^2). $$
Therefore, by means of a suitable truncation of the function~$x^2$ one 
can easily construct a bounded smooth function~$\varphi$ such that
\begin{equation*}
\forall H\in (\tfrac{1}{4},1),~~\sup_{t\in [0,T]} \left|\EE 
\varphi(X_t^H) 
- \EE \varphi(\mathbf{X}_t) \right| = C~|H-\tfrac{1}{2}|
+\ensuremath{\mathop{}\mathopen{}
{\scriptstyle\mathcal{O}}\mathopen{}(H-\tfrac{1}{2})}.
\end{equation*}
\end{remark}

\begin{remark} \label{remark:extension-H-H'}
In view of Proposition~\ref{th:gapLawXt^H} one has the following
estimate for all $H\leq\frac{1}{2}\leq H'$:
\begin{align*}
\sup_{t\in [0,T]} |\EE\varphi(X_{t}^H) - \EE\varphi(X_{t}^{H'})| 
&\leq \sup_{t\in [0,T]} |\EE\varphi(X_{t}^H) - \EE\varphi(\mathbf{X}_{t})| + \sup_{t\in [0,T]} |\EE\varphi(X_{t}^{H'}) - \EE\varphi(\mathbf{X}_{t})|\\
&\leq C(\tfrac{1}{2}-H) + C (H'-\tfrac{1}{2}) = C(H'-H).
\end{align*}
\end{remark}

One can prove an extension of Proposition~\ref{th:gapLawXt^H} 
to any pair $(H,H')$ in $(\frac{1}{4},1)$ as follows. One applies the 
Lamperti transform and Gronwall's lemma to get 
\begin{align*}
\sup_{t\in [0,T]} |\EE\varphi(X_{t}^H) - \EE\varphi(X_{t}^{H'})| 
&\leq \|\varphi'\|_{\infty} \sup_{t\in [0,T]} \EE|X_{t}^H - X_{t}^{H'}| \\
&\leq \|\varphi'\|_{\infty} e^{ \|\tilde{b}'\|_{\infty}T} \, \EE[\sup_{t\in [0,T]} |B_{t}^H-B_{t}^{H'}|].
\end{align*}
It then remains to use that $\EE[\sup_{t\in [0,T]} |B_{t}^H-B_{t}^{H'}|]\lesssim |H-H'|^{1-\varepsilon}$ (see e.g. ~\citet{Decreusefond} or~\citet[p.1404]{R.}).

However, for pedagogical reasons we follow another way in subsections 
~\ref{subsec:HolderEst} and~\ref{subsec:proof_gapLawXt^H}. 
We thus softly introduce our methodology to study the 
sensitivity of 
Laplace transforms of hitting times. Laplace transforms of hitting 
times involve irregular functionals of the paths of~$X^H$ and thus 
arguments based upon Gronwall's lemma cannot work. 

Our strategy is based upon the following observation: when 
$H=\frac{1}{2}$ the process $\mathbf{X}$ is Markovian. Thus, integrals 
w.r.t. its time marginal probability distributions can be expressed in 
terms of elliptic or parabolic PDEs. Whenever the coefficients of the 
generator of $\mathbf{X}$ are smooth enough to allow it, the key 
argument consists in applying Itô's formula to the solution of the 
suitable PDE and then using that the resulting It\^o integral is a 
martingale and thus has zero expectation.

We thereby apply Itô-Skorokhod's formula to the solution of the 
suitable PDE and the fractional diffusion. We thus transform
our sensitivity problem to the comparison between stochastic integrals 
driven by standard Brownian motions and, respectively, by fractional 
Brownian motions. The resulting estimates reflect that the 
larger is $\left|H-\tfrac{1}{2}\right|$, the 
bigger is the loss of the Markov property.

As explained in Section~\ref{subsec:SDE_Ito}, to be in a position to 
apply Itô-Skorokhod's formula we use the Lamperti process $Y^H$ rather 
than~$X^H$.

 \subsection{Preliminary results}\label{subsec:HolderEst}

Let $\mathbf{Y}:= Y^{\frac{1}{2}}$ be the solution 
to~\eqref{eq:fSDE_Lamperti} in the pure Brownian case. Let $F$ be the Lamperti transform of Proposition \ref{prop:Lamperti}.
We need to prove that
\begin{equation*}
\forall H\in (\tfrac{1}{4},1),~~\sup_{t\in [0,T]} \left|\EE 
\varphi\circ F^{-1}(Y_t^H) 
- \EE \varphi\circ F^{-1}(\mathbf{Y}_t) \right| \leq C\ 
|H-\tfrac{1}{2}|.
\end{equation*}

Arbitrarily fix a time $t\in(0,T]$ and consider the following 
parabolic PDE with terminal condition  
$\varphi\circ F^{-1}$ at time~$t$:
\begin{equation}\label{eq:PDE}
\begin{cases}
\frac{\partial }{\partial s}u(s,x) + \widetilde{b}(x)
\frac{\partial}{\partial x} u(s,x) + \tfrac{1}{2} 
\frac{\partial^2}{\partial x^2} u(s,x) = 0,
~~(s,x)\in [0,t)\times \R, \\
u(t,x) = \varphi\circ F^{-1}(x),~~x\in \R.
\end{cases}
\end{equation} 

We prove Proposition \ref{th:gapLawXt^H} in the case $H>\tfrac{1}{2}$ 
only. The necessary additional arguments to handle the case 
$H<\tfrac{1}{2}$ can be found in the technically demanding 
Section~\ref{sec:majgap_0}. We will need the following integrability 
result.

\begin{lemma}\label{lem:bddPDE}
Let $\varphi\in \Cb^{2+\beta}(\R)$ for some $0<\beta<1$. Suppose that 
$b,\sigma$ satisfy the hypotheses~\ref{hyp:h1'}-\ref{hyp:h2'}. 
There exists a unique solution $u(s,x)$ in 
$\Cb^{1,2+\beta}([0,t]\times\R)$ to the PDE~\eqref{eq:PDE}. For any 
$x\in \R$, the functions $\partial_s 
u(\cdot,x)$ and $\partial_x u(\cdot,x)$ are bounded. In addition, for 
any $H>\tfrac{1}{2}$ one has
\begin{equation} \label{inrq:DH-derivee-u}
\int_0^t \int_0^t |r-s|^{2H-2} \left|D^H_r(\partial_x u(s,Y^H_s) 
)\right|~dr~ds <\infty~\text{a.s.}
\end{equation}
\end{lemma}

\begin{proof}
Notice that $\varphi\circ F^{-1}\in \Cb^{2+\gamma}(\R)$
and $\widetilde{b}\in\Cb^2(\R)$. The existence and uniqueness of 
$u(s,x)$ 
in 
$\Cb^{1,2+\beta}([0,t]\times\R)$ is a classical result: See e.g.
~\citet[p.189]{Lunardi}. 
From Feynman-Kac's formula, there exists a locally 
bounded positive function $C(t)$ such that
\begin{equation*}
|u|_{\Cb^{1,2+\beta}([0,t]\times\R)} \leq C(t)~
|\varphi \circ F^{-1}|_{\Cb^{2+\beta}(\R)}.
\end{equation*}
As~$D^H_r\left(\partial_x u(s,Y^H_s)\right) = D^H_r 
Y^H_s~\partial^2_{xx} u(s,Y^H_s)$, 
the desired inequality~\eqref{inrq:DH-derivee-u} follows 
from~Proposition~\ref{lem:HolderFSDE} when $H>\frac{1}{2}$.
\end{proof}

We now are in a position to prove Proposition~\ref{th:gapLawXt^H}.
As already said, we limit ourselves to the case~$H>\tfrac{1}{2}$.

\subsection{Proof of Proposition \ref{th:gapLawXt^H} ($H>\tfrac{1}{2}$)}\label{subsec:proof_gapLawXt^H}

Let $0<t\leq T$ be arbitrarily fixed. We start with representing
$\EE \varphi(Y_t^H) - \EE \varphi(\mathbf{Y}_t)$ in an integral form by 
using the solution $u$ of the PDE~(\ref{eq:PDE}). 
The properties of the function~$u$ recalled in Lemma~\ref{lem:bddPDE}
imply that the process $(u(s,\mathbf{Y}_s),0\leq s\leq t)$ is a 
martingale.

By using the It\^o--Skorokhod formula~\eqref{eq:ItoSkorokhodFormula} 
we get
\begin{align*}
u(t,Y_t^H) &= u(0, F(x_0)) + \int_0^t \left(\partial_s 
u(s,Y^H_s) + \partial_x u(s,Y^H_s)~\widetilde{b}(Y^H_s)\right)\ ds + 
\delta_H\left(\indi{[0,t]} \partial_x u(\cdot,Y_\cdot^H) 
\right) \\
& \quad\quad + \Tr\left[ D^H \partial_y G(\sbullet,Y^H_\sbullet)\right]_t.
\end{align*}
Let us explicit the Trace term. Since we consider the case 
$H>\frac{1}{2}$, by using~\eqref{eq:formula-Tr}, 
\eqref{eq:deriveeMalliavin} and~\eqref{eq:fSDE_Lamperti} we get
\begin{align*}
\Tr\left[ D^H \partial_y G(\sbullet,Y^H_\sbullet)\right]_t &= 
\int_{0}^t \partial^2_{y}G(s,Y^H_{s}) \Big( Hs^{2H-1} + \int_{0}^s 
\partial_{s}K_{H}(s,r)~ K_{H}^*[D^H_{\cdot} Y^H_{s} - 
\indi{[0,s]}(\cdot)](r)~dr\Big) ds.
\end{align*}
As $H>\frac{1}{2}$ we also have $K_{H}(s,s)=0$. Therefore,
\begin{align*}
\int_{0}^s \partial_{s}K_{H}(s,r)~
K_{H}^*[\indi{[0,s]}](\cdot)](r)~dr
&= \int_{0}^s \partial_{s}K_{H}(s,r)~K_{H}(s,r)~dr \\
&= \tfrac{1}{2} 
\int_{0}^s \partial_{s}(K_{H}(s,r))^2~dr = \tfrac{1}{2} \partial_{s} 
\Big(\int_{0}^s (K_{H}(s,r))^2~dr\Big)\\
&= Hs^{2H-1}.
\end{align*}
Combine this equality with~\eqref{eq:defKHgeq12},
\eqref{eq:deriv-Malliavin-Y^H} and $K_{H}(\theta,\theta)=0$ for any 
$0<\theta$ to obtain
\begin{align*}
\Tr\left[ D^H \partial_y G(\sbullet,Y^H_\sbullet)\right]_t &= \int_{0}^t \partial^2_{y}G(s,Y^H_{s}) \int_{0}^s \partial_{s}K_{H}(s,r)~ K_{H}^*[D^H_{\cdot} Y^H_{s}](r)~dr ~ds\\
&=  \int_{0}^t \partial^2_{y}G(s,Y^H_{s}) \int_{0}^s 
\partial_{s}K_{H}(s,r)~\int_{r}^s \partial_{\theta}K_{H}(\theta,r) \, 
D^H_{\theta} Y^H_{s}~d\theta~dr~ds\\
&=  \int_{0}^t \partial^2_{y}G(s,Y^H_{s}) 
\int_{0}^s D^H_{\theta} Y^H_{s}
\Big(\int_{0}^\theta \partial_{s}K_{H}(s,r) \, 
\partial_{\theta}K_{H}(\theta,r)~dr\Big)~d\theta~ds\\
&=  \int_{0}^t \partial^2_{y}G(s,Y^H_{s}) 
\int_{0}^s D^H_{\theta} Y^H_{s}
~\partial_{s}\partial_{\theta} \Big(\int_{0}^\theta
K_{H}(s,r)~K_{H}(\theta,r)~dr\Big)~d\theta~ds.
\end{align*}
It remains to use~\eqref{eq:decompCov} and~\eqref{def:alpha_H} to get 
the following explicit formula for the Trace term:
\begin{align*}
\Tr\left[ D^H \partial_y G(\sbullet,Y^H_\sbullet)\right]_t = \alpha_{H} 
\int_{0}^t \partial^2_{y}G(s,Y^H_{s}) 
\int_{0}^s D^H_{\theta} Y^H_s~(s-\theta)^{2H-2}~d\theta~ds.
\end{align*}
Now, use the definition of $u$ and the fact that the Skorokhod integral 
has zero expectation to get
\begin{equation*}
\begin{split}
\EE \varphi\circ F^{-1}(Y_t^H) 
- \EE\varphi\circ F^{-1}(\mathbf{Y}_t) 
&= \EE u(t,Y_t^H) - u(0, F(x_0)) \\
&= -\tfrac{1}{2}\EE\int_0^t \partial^2_y u(s,Y^H_s)~ds \\
&\quad +\alpha_H \EE\int_0^t \partial^2_y u(s,Y^H_s)
\int_0^s D^H_{\theta} Y^H_s~(s-\theta)^{2H-2}~d\theta~ds \\
&= \EE\int_0^t \partial^2_y u(s,Y^H_s) \Big( Hs^{2H-1} 
-\tfrac{1}{2}\Big) ds \\
&\quad +\alpha_H \EE\int_0^t ~\partial^2_y u(s,Y^H_s) 
\int_0^s (D^H_{\theta} Y^H_s - 1)~(s-\theta)^{2H-2}~d\theta~ds  \\
&=: \Delta^1_H + \Delta^2_H.
\end{split}
\end{equation*}

For any $y>0$ one has $e^y-1 \leq y~e^y$ and 
$1-e^{-y} \leq y$, from which
\begin{equation} \label{rem:power_ineq}
\forall x>0, \forall \alpha\in (-\tfrac{1}{4}, \tfrac{1}{2}),
~|x^\alpha - 1| \leq |\alpha \log(x)|~(1 \vee x^\alpha)
\leq |\alpha \log(x)|~(1 + x^\alpha).
\end{equation}
By using the preceding inequality with $x=s^2$ and 
$\alpha=H-\frac{1}{2}$
we get
$$ |\Delta^1_H|
\leq C~(H-\tfrac{1}{2})~|\partial^2_y u|_\infty \int_0^t |\log(s)|
~(1+s^{2H-1})~ds \leq C~(H-\tfrac{1}{2}). $$

In view of~\eqref{ineq:DrYHt-DrYHs} we also have
\begin{equation*}
\begin{split}
|\Delta^2_H|&\leq C~\alpha_H \int_0^t |\partial^2_y u(s,Y^H_s)| 
\int_0^s (s-\theta)^{2H-2}~(s-\theta) ~d\theta~ds \\
&\leq C~|\partial^2_y u|_\infty~\alpha_H 
\int_0^t \int_0^s (s-\theta)^{2H-1}~d\theta~ds \\
&\leq C~(H-\tfrac{1}{2}). 
\end{split}
\end{equation*}
That ends the proof for $H>\frac{1}{2}$. \qed

\begin{remark} \label{rk:commentaire-H'-smooth-functionals}
We come back to the discussion initiated in the Introduction to justify 
the choice of the Markov model as the proxy model. If the proxy model 
were driven by a fractional Brownian motion with Hurst 
index~$H'\neq\frac{1}{2}$, by applying the It\^o--Skorokhod 
formula~\eqref{eq:ItoSkorokhodFormula} we would be led to estimate
$$ \EE\Tr\Big[ D^H \phi'(Y^H_\cdot)\Big]_t
- \EE\Tr\Big[ D^{H'} \phi'(Y^{H'}_\cdot)\Big]_t. $$
Therefore, in the case of smooth test functionals such as
$\varphi(X_t^H)$ it seems possible to get an accurate sensitivity 
estimate in terms of~$|H-H'|$. We do not develop here the 
calculations and prefer to concentrate on the case of irregular 
functionals.
\end{remark}

\section{The irregular functional case: Sensitivity of Laplace 
transforms of first passage times}\label{sec:majgap_0}

The aim of this section is to
estimate the sensitivity w.r.t. the Hurst parameter~$H$
of the Laplace transform of $\tau^H_X$ defined as in~\eqref{eq:defTauH}.

Our sensitivity analysis on Laplace transforms is based on the PDE 
representation of first hitting time Laplace transforms in the 
pure Markov case $H=\tfrac{1}{2}$. Our strategy starts as in 
Section~\ref{subsec:proof_gapLawXt^H}: Apply It\^o's 
formula to the solution of the suitable PDE in order to transform
the sensitivity problem into a comparison between stochastic integrals 
w.r.t. standard Brownian motions and, respectively, frac{t}ional 
Brownian 
motions. As explained in Section~\ref{subsec:SDE_Ito} we need to 
consider the Lamperti process $Y^H$.

Observe that the first hitting time $\tau^H_X$ of $1$ by $X^H$ 
started from $x_0<1$ is equal to the first hitting time $\tau^{H}_Y$ of 
$F(1)$ by $Y^H$ started from $F(x_0)$.

Before stating our main result in this section we recall that
the notation~$C_H$ has been defined at the end of 
Section~\ref{sec:Intro} and we introduce the following new notation.

\begin{convention} \label{def-Theta-y0}
In all the sequel we set
$$ \mathbb{Y}:=F(1)~~~\text{and}~~~y_0:=F(x_0) < \mathbb{Y}. $$
\end{convention}

\begin{theorem} \label{prop:majgap_0}
Let $X^H$ and $\mathbf{X}$ be the solutions to (\ref{eq:fSDE_0}) and 
(\ref{eq:SDE_0}) respectively. 
Assume that $b$ and $\sigma$ satisfy \ref{hyp:h1'}-\ref{hyp:h2'}. 
Let the function~$\widetilde{b}$ be defined as in 
Proposition~\ref{prop:Lamperti}.

For any $p\geq1$ and $\lambda>|\widetilde{b}'|_\infty$ set
\begin{equation} \label{def:rate}
\rate_p(\mathbb{Y}-y_0,\lambda) := \sup_{s\in \R_+} 
\left( e^{-\frac{1}{2} (\lambda-|\widetilde{b}'|_\infty) p s}
~\EE~e^{-|\mathbb{Y}-Y^H_s|p\mathcal{R}(\lambda)} \right),
\end{equation} 
where
\begin{equation}\label{eq:def_muRST}
\mathcal{R}(\lambda):=\sqrt{2\lambda +\mu^2} - \mu
~~\text{with}~~\mu:=|\widetilde{b}|_\infty.
\end{equation}

Suppose $x_0<1$ and $\lambda >|\widetilde{b}'|_\infty$. 
Set~$\widetilde{\lambda}:= \lambda - |\widetilde{b}'|_\infty$. 
For any $H\in(\tfrac{1}{4},1)$ we have

\begin{multline} \label{ineq:Laplace-transform-main-theorem}
\Big|\EE\left(e^{-\lambda \tau^H_X}\right) 
- \EE\left(e^{-\lambda \tau_{\mathbf{X}}}\right)\Big| \\
\leq C_H~|H-\tfrac{1}{2}|
~\frac{(1+\lambda)^2}{1\wedge\widetilde{\lambda}^3}
~\Big( \rate_{1}(\mathbb{Y}-y_0,{\lambda}) + 
\left(\rate_{2}(\mathbb{Y}-y_0,{\lambda}) \right)
^{\frac{H\wedge\frac{1}{2}}{6}}
+ \left(\rate_{4}(\mathbb{Y}-y_0,{\lambda})\right)
^{\frac{H\wedge\frac{1}{2}}{12}} \Big). 
\end{multline}
\end{theorem}

The following proposition precises the convergence rate
in~\eqref{ineq:Laplace-transform-main-theorem}. It is proven in 
Appendix~\ref{App:moments}.

\begin{proposition}\label{prop:moments}
Let $\lambda >|\widetilde{b}'|_\infty$. Let $m:=\mathbb{Y}-y_0$,
$\mu:=|\widetilde{b}|_\infty$, $q:=p\mathcal{R}(\lambda)$
and 
$\widetilde{\lambda}:= \lambda - |\widetilde{b}'|_\infty$. 

One has
\begin{equation} \label{ineq:moment-Wlambda-YH-integ}
\rate_p(\mathbb{Y}-y_0,\lambda)
\leq C~\Big( e^{-\frac{q}{2}m}
+ e^{-\frac{\widetilde{\lambda}}{2} \Psi^H_q(m)} 
+\exp\big(-2^{-\frac{8}{3}}~m^{\frac{2}{1+2H}}~
\widetilde{\lambda}^{\frac{2H}{1+2H}}\big)
+ \exp\big(-\widetilde{\lambda} \tfrac{m}{2\mu} \big) \Big),
\end{equation}
where
\begin{equation} \label{def-App:Psi-H}
\Psi^H_q(m) := \frac{m}{\mu+q}
~\indi{\left[\left(\frac{m}{\mu+q}\right)^{2H-1} < 1\right]}
+ \left(\frac{m}{\mu+q}\right)^{\frac{1}{2H}}
~\indi{\left[\left(\frac{m}{\mu+q}\right)^{2H-1} \geq 1\right]}.
\end{equation}
\end{proposition}

\begin{remark}\label{rk:optimal-CV-rate}
In Theorem~\ref{prop:majgap_0} the convergence rates w.r.t. 
$|H-\frac{1}{2}|$ and $\lambda$ are optimal. See 
remarks~\ref{rk:estimations-optimales} 
and~\ref{rk:estimation-I1-optimale} below.
\end{remark}

\begin{remark}\label{rem:extThm}
Theorem \ref{prop:majgap_0} provides a sensitivity estimate with a 
constant which explodes when~$\widetilde{\lambda}$ tends to~0. 
In~\cite{RTconf} we emphasize that if the joint probability 
distribution of $B^H$ and its running maximum were explicitly known, then one 
should be able to show that the constants are uniform w.r.t. 
$0<\widetilde{\lambda}=\lambda<1$ when 
$X^H$ is reduced to be the fractional Brownian motion~$B^H$.
In Section~\ref{sec:examples} we succeed to get this result when the drift
coefficient~$\widetilde{b}$  is bounded from below by a positive constant and
$\frac{1}{4}<H<\frac{1}{2}$.
\end{remark}

As explained in the Introduction, the proof of~Theorem 
\ref{prop:majgap_0} is technically demanding because we desire
a bound from above which tends to~0 as fast as possible 
when $H$ tends to~$\tfrac{1}{2}$ and
decays at the same exponential rate when $\lambda$ or $|1-x_0|$ tends 
to infinity as in the exact formula~\eqref{Laplace:brownien}.
This proof is split into Subsections \ref{subsec:W-lambda} to 
\ref{subsec:LpEstimates}.

\begin{itemize}
\item In Subsection~\ref{subsec:W-lambda} we remind the differential
equation solved by the function~$\wl(y) :=\EE\left(e^{-\lambda 
\tau_{\mathbf{Y}}}~\Big|~\mathbf{Y}_0=y\right)$, where
$\mathbf{Y}$ the Lamperti process solution 
to the SDE~\eqref{eq:fSDE_Lamperti} driven by a standard Brownian 
motion~$\mathbf{B}$:
\begin{equation}\label{eq:fSDE_Lamperti-BM}
\forall t\geq 0,\quad \mathbf{Y}_t = y_0 + \mathbf{B}_t + \int_0^t 
\widetilde{b}(\mathbf{Y}_s)\ ds.
\end{equation}
Let $\tau_{\mathbf{Y}}$ be
the first hitting time of $\mathbb{Y}$ by $\mathbf{Y}$.
We suitably define an
extension~$\wl$ to the whole real line of that function and
get estimates on the derivatives $\wl^{(i)}(Y^H_s)$ for $i=1,2$.
\item In Subsection~\ref{subsec:err_dec_fBm} we adopt the same strategy 
as in Subsection~\ref{subsec:proof_gapLawXt^H}. The 
difference \newline $\left|\EE\left(e^{-\lambda 
\tau^H_Y}\right) - \EE\left(e^{-\lambda \tau_{\mathbf{Y}}}
\right)\right|$ is split into the sum of a stopped Lebesgue integral 
and a stopped Skorokhod integral, the integrands being expressed in 
terms of the function~$\wl$.
\item In Subsection~\ref{subsec:I1} we get an accurate estimate on the 
stopped Lebesgue integral.
\item In Subsection~\ref{subsec:proof2II} we get an accurate estimate
on the stopped Skorokhod integral.
\item In Subsections 
\ref{subsec:elementaryProp}, \ref{subsec:lemma-L2-estimates}, \ref{subsec:keyLemma} and \ref{subsec:LpEstimates} we prove
technical intermediate results.

\end{itemize}

\subsection{An `optimal' extension of the Laplace 
transform for $H=\frac{1}{2}$ and related 
estimates}\label{subsec:W-lambda}

For any $\lambda>0$ the function $\wl(y) := \EE\left(e^{-\lambda 
\tau_{\mathbf{Y}}}~\big| \mathbf{Y}_0=y\right)$ defined on the 
interval $(-\infty,\mathbb{Y}]$ solves the following ODE:
\begin{equation}\label{eq:ODE_Y}
\begin{cases}
\widetilde{b}(y) \wl'(y) + \tfrac{1}{2}\wl''(y) &= \lambda 
\wl(y),~y<\mathbb{Y},\\
\wl(\mathbb{Y})&=1,\\
\displaystyle{\lim_{y\rightarrow -\infty}} \wl(y) &= 0.
\end{cases}
\end{equation}

In the sequel we will need to consider `stopped' Skorokhod integrals of 
the type
$$ \left.\delta_H^{(N)}\big(\indi{[0,t]}(\cdot) e^{-\lambda \cdot} 
\wl'(Y_\cdot^H) \big) \right|_{t=\tau^H_Y\wedge N}. $$
These `stopped' integrals can only be defined by considering
the parametered family 
$$ \delta_H^{(N)}\big(\indi{[0,t]}(\cdot) e^{-\lambda \cdot} 
\wl'(Y_\cdot^H) \big) $$ 
which cannot be defined without extending
the domain of the function~$\wl$ to the whole real line. 
Of course, we have to choose an extension which allows us to 
get sharp estimates: We discuss this important issue in the 
remarks~\ref{rk:estimations-optimales} 
and~\ref{rk:estimation-I1-optimale} below.

By abuse of notation we denote our extension below  by $\wl$.
For any $\lambda>0$, $\wl$ is the non-negative $\Cb^2(\R)$ function 
defined as follows:
\begin{equation}\label{eq:ODE_Y_ext}
\begin{cases}
\forall y\leq\mathbb{Y},~~\wl(y) := \EE\big(e^{-\lambda 
\tau_{\mathbf{Y}}}~\big|\mathbf{Y}_0=y\big), \\
\forall y\geq\mathbb{Y},~~ 
\wl(y) := \phi(y)~\wl(2\mathbb{Y}-y),
\end{cases}
\end{equation}
where $\phi(z)$ is a non-negative function in $\Cb^3(\R)$ with 
$\phi(0)=1$, uniformly bounded w.r.t.~$\lambda$ and such 
that the first and second derivatives at $\mathbb{Y}$ of the map 
$\phi(y)~\wl(2\mathbb{Y}-y)$ respectively coincide with 
the left derivatives $W_{\lambda}'(\mathbb{Y}-)$ and
$W_{\lambda}''(\mathbb{Y}-)
=2\lambda-\widetilde{b}(\mathbb{Y})~W_{\lambda}'(\mathbb{Y}-)$. For 
example, 
one can choose
$$ \phi(y) = \Psi\left(2(\wl'(\mathbb{Y}))^2(y-\mathbb{Y})^2
~+~2\wl'(\mathbb{Y})(y-\mathbb{Y})~+~1\right), $$
where $\Psi$ is any non-negative function in $\Cb^3(\R)$ such that 
$\Psi(x)=x$ on $[\frac{1}{2},2]$ and $\Psi(x)=0$ on 
$(-\infty,0)\cup(3,+\infty)$.

In the Brownian motion case, the Laplace transform of the
first hitting time at the threshhold~1 is explicitly given 
by~\eqref{Laplace:brownien}. One easily deduce that
the derivatives w.r.t.~$x_0$ of this Laplace transform tend 
exponentially fast to~0 when $\lambda$ or $(1-x_0)$ tends to infinity.
The following proposition shows that the two first 
derivatives of the function~$\wl$ defined as in 
Section~\ref{subsec:W-lambda} satisfy similar exponential convergence 
rates. We postpone to Appendix~\ref{app:boundW} its easy proof.

\begin{proposition}\label{prop:boundW}
For any $\lambda>0$, let $\wl(y)$ be defined as 
in~\eqref{eq:ODE_Y_ext}. Under the assumptions \ref{hyp:h1'} and 
\ref{hyp:h2'} on 
$b$ 
and $\sigma$ one has
\begin{equation} \label{maj:wl}
\forall y\in\R,~0\leq \wl(y) 
\leq e^{-|\mathbb{Y}-y|~\mathcal{R}(\lambda)},
\end{equation}
where $\mathcal{R}(\lambda)$ is defined as in~\eqref{eq:def_muRST}:
$\mathcal{R}(\lambda):=\sqrt{2\lambda +\mu^2} - \mu$.

In addition, the two first derivatives of $\wl$ satisfy the following 
estimates: There exists $C>0$ depending on $\mu$ only 
such that, for all real numbers $y$ and $\tilde{y}$,
\begin{gather}
|\wl'(y)| \leq C(1 + \lambda)~e^{-|\mathbb{Y}-y|~\mathcal{R}(\lambda)}, 
\label{maj:wl-d1} \\
|\wl''(y)| \leq C(1+\lambda) 
~e^{-|\mathbb{Y}-y|~\mathcal{R}(\lambda)} , \label{maj:wl-d2} \\
|\wl''(y) - \wl''(\tilde{y})| \leq C~(1+\lambda)^2~|y-\tilde{y}|~
\big( e^{-|\mathbb{Y}-y|~\mathcal{R}(\lambda)} + 
e^{-|\mathbb{Y}-\tilde{y}|~\mathcal{R}(\lambda)} \big).
\label{maj:diff-wl-d2}
\end{gather}
\end{proposition}

\subsection{An error decomposition}\label{subsec:err_dec_fBm}

\begin{proposition}\label{prop:gaps}
Set
\begin{align}\label{eq:Delta}
\Delta(s,H) := Hs^{2H-1}-\tfrac{1}{2} 
+ \int_0^s \partial_s 
K_H(s,r)~\int_{0}^s\mathbf{D}_r\widetilde{b}(Y^H_{v})~dv~dr. 
\end{align}
For any $\lambda> |\widetilde{b}'|_\infty$ it holds that
\begin{align} \label{eq:diff-transf-Laplace}
\EE\Big(e^{-\lambda \tau^H_Y}\Big) - \EE\Big(e^{-\lambda 
\tau_{\mathbf{Y}}}\Big) &= \EE\Big[\int_0^{\tau^H_Y} \Delta(s,H) 
e^{-\lambda s} \wl''(Y_s^H)\ ds \Big] \nonumber \\
&\quad\quad +\lim_{N\rightarrow +\infty} \EE\Big[ 
\left.\delta_H^{(N)}\Big(\indi{[0,t]} e^{-\lambda \cdot} 
\wl'(Y_\cdot^H) \Big)\right|_{t=\tau^H_Y\wedge N}\Big] \nonumber \\
&=: I_1(\lambda) + I_2(\lambda).
\end{align}
\end{proposition}

\begin{proof}
Let $N>0$. All the stochastic integrals below are well-defined and 
integrable in view of the bounds on $|\wl'|$ and $|\wl''|$ in 
Proposition~\ref{prop:boundW}.

Apply the It\^o--Skorokhod 
formula~(\ref{eq:ItoSkorokhodFormula}) to 
$e^{-\lambda t}\wl(Y^H_t)$ and use the convention of 
writing~\ref{convention-of-writing}: For any $0<t\leq N$,
\begin{align*}
e^{-\lambda t} \wl(Y_t^H) - \wl(y_0) 
&= \int_0^t e^{-\lambda s} \Big(\widetilde{b}(Y_s^H)~\wl'(Y_s^H) - 
\lambda \wl(Y_s^H)\Big) \ ds  \\
& \quad \quad + \delta_H^{(N)}\Big(
e^{-\lambda \sbullet}\indi{[0,t]}(\sbullet) \wl'(Y^H_\sbullet) 
\Big) + \Tr\Big[D^H 
e^{-\lambda \sbullet}~\wl'(Y^H_\sbullet)\Big]_t.
\end{align*}
Using the ODE (\ref{eq:ODE_Y}) satisfied by $\wl$ 
we get: 
\begin{align*} 
e^{-\lambda(N\wedge \tau^H_Y)} \wl(Y_{N\wedge \tau^H_Y}^H) - \wl(y_0) &= 
-\tfrac{1}{2} \int_0^{N\wedge \tau^H_Y}  e^{-\lambda s} \wl''(Y_s^H)~ds 
+ \delta_H^{(N)}\left. \Big(\indi{[0,t]}\ 
\wl'(Y^H_\cdot)\ e^{-\lambda 
\cdot}\Big)\right|_{t=N\wedge\tau^H_Y} \\
&\quad\quad + \Tr\Big[D^H 
e^{-\lambda \sbullet}~\wl'(Y^H_\sbullet)\Big]_{t=N\wedge\tau^H_Y}.
\end{align*}
We now use the equality~\eqref{eq:formula-Tr} and get: 
\begin{align*}
\EE\Big(e^{-\lambda (N\wedge 
\tau^H_Y)}~\wl(Y^H_{N\wedge \tau^H_Y})\Big) - \wl(y_0) &= \EE \Big[ 
\int_0^{N\wedge\tau^H_Y} \Delta(s,H) \wl''(Y^H_s)\ 
e^{-\lambda s}\ ds \Big] \\
&\quad +\EE\Big[\left.\delta_H^{(N)}\Big(\indi{[0,t]}\ 
\wl'(Y^H_\cdot)\ e^{-\lambda 
\cdot}\Big)\right|_{t=N\wedge\tau^H_Y}\Big].
\end{align*}
The dominated convergence theorem and the inequality~\eqref{maj:wl} 
imply that the left-hand side converges when $N$ tends to infinity. We 
claim that the first integral in the right-hand side also converges in 
the same limit. Actually, we
combine the dominated convergence theorem with the inequality~\eqref{maj:wl-d2} and the estimate~\eqref{ineq:Delta} which will be proven below.
\end{proof}

In the preceding we have used the following technical lemma 
which will also be needed in the proof of Proposition~\ref{prop:I1'}.
\begin{lemma}\label{lem:tecDelta}
One has
\begin{align*}
\forall H\in(\tfrac{1}{4},1), 
~~\Big|\int_0^s \partial_s K_H(s,r)~
\int_{0}^s\mathbf{D}_r\widetilde{b}(Y^H_{v})~dv~dr \Big|
\leq C_H~|H-\tfrac{1}{2}|~e^{s|\widetilde{b}'|_\infty}
(1 + s^{2})~ \text{a.s.}
\end{align*}
Therefore,
\begin{equation} \label{ineq:Delta}
|\Delta(s,H)| \leq \left|Hs^{2H-1}-\tfrac{1}{2}\right|
+C_H~\left|H-\tfrac{1}{2}\right|~e^{s|\widetilde{b}'|_\infty}
(1 + s^{2})~ \text{a.s.}
\end{equation}
\end{lemma}
           
\begin{proof}
In view of Proposition~\ref{lem:DYH} we have
$$ \int_{0}^s |\mathbf{D}_r\widetilde{b}(Y^H_{v})|~dv
\leq C~e^{s|\widetilde{b}'|_\infty} \int_r^s 
\left\{  |K_H(v,r)| + (v-r)^{H+\frac{1}{2}} 
~\indi{\{H<\frac{1}{2}\}} \right\}~dv. $$
Then use~\eqref{def:KH} to get
\begin{equation} \label{ineq:integ-Drb}
\begin{split}
\int_0^s |\mathbf{D}_r\widetilde{b}(Y^H_{v})|~dv
&\leq C~e^{s|\widetilde{b}'|_\infty} \int_r^s 
\Big\{ \left(\tfrac{v}{r}\right)^{H-\frac{1}{2}}~(v-r)^{H-\frac{1}{2}}
+ |H-\tfrac{1}{2}|~r^{\frac{1}{2}-H} 
\int_r^v \theta^{H-\frac{3}{2}} (\theta-r)^{H-\frac{1}{2}}~d\theta \\
&~~~~~~~~~~~~~~~~~~~~~~~~~
+(v-r)^{H+\frac{1}{2}} ~\indi{\{H<\frac{1}{2}\}} \Big\}~dv.
\end{split}
\end{equation}

We now distinguish two cases.

\vspace{0.2cm}

\underline{The case $H<\tfrac{1}{2}$.}

In this case, the change of variables $\gamma=\frac{\theta}{r}$ 
leads to
$$ \int_r^v \theta^{H-\frac{3}{2}} (\theta-r)^{H-\frac{1}{2}}~d\theta
\leq r^{2H-1}\int_1^\infty
\gamma^{H-\frac{3}{2}}~(\gamma-1)^{H-\frac{1}{2}}~d\gamma 
\leq r^{2H-1}~(C+\int_2^\infty \gamma^{H-\frac{3}{2}}~d\gamma)
\leq \frac{C~r^{2H-1}}{|H-\tfrac{1}{2}|}. $$
In view of~\eqref{ineq:integ-Drb} and 
$\left(\tfrac{v}{r}\right)^{H-\frac{1}{2}}<1$ for $0<r<v$
and $H<\tfrac{1}{2}$ we deduce that 
\begin{equation*}
\int_0^s |\mathbf{D}_r\widetilde{b}(Y^H_{v})|~dv
\leq C~e^{s|\widetilde{b}'|_\infty}
( (s-r)^{H+\frac{1}{2}} + r^{H-\frac{1}{2}}~(s-r) 
+(s-r)^{H+\frac{3}{2}}).
\end{equation*}
Recall~\eqref{eq:deriv-K_H}. It comes:
\begin{multline*}
\left|\int_0^s \partial_s K_H(s,r)~
\int_{0}^s\mathbf{D}_r\widetilde{b}(Y^H_{v})~dv~dr \right| \\
\leq C~|H-\tfrac{1}{2}|~e^{s|\widetilde{b}'|_\infty}
\int_0^s 
\left(\tfrac{s}{r}\right)^{H-\frac{1}{2}}(s-r)^{H-\frac{3}{2}} 
~( 
(s-r)^{H+\frac{1}{2}}
+ r^{H-\frac{1}{2}}~(s-r) 
+(s-r)^{H+\frac{3}{2}} )~dr.
\end{multline*}
It now remains to observe that
$$ \int_0^s \left(\tfrac{s}{r}\right)^{H-\frac{1}{2}}
(s-r)^{2H-1}~dr \leq \int_0^s (s-r)^{2H-1}~dr \leq C~s^{2H}, $$
 $$ \int_0^s \left(\tfrac{s}{r}\right)^{H-\frac{1}{2}}
~(s-r)^{H-\frac{1}{2}}~r^{H-\frac{1}{2}}~dr = C~s^{2H}, $$
and
$$ \int_0^s \left(\tfrac{s}{r}\right)^{H-\frac{1}{2}}
~(s-r)^{2H}~dr \leq \int_0^s (s-r)^{2H}~dr \leq C~s^{2H+1} 
\leq C~(1+s^2). $$
\vspace{0.2cm}

\underline{The case $H>\tfrac{1}{2}$.} 

In this case,
$$ \int_r^v \theta^{H-\frac{3}{2}} (\theta-r)^{H-\frac{1}{2}}~d\theta 
\leq \int_r^v (\theta-r)^{2H-2}~d\theta = \frac{(v-r)^{2H-1}}{2H-1}. $$
In view of~\eqref{ineq:integ-Drb} we deduce that
\begin{equation*}
\begin{split}
\int_0^s |\mathbf{D}_r\widetilde{b}(Y^H_{v})|~dv
&\leq C~e^{s|\widetilde{b}'|_\infty} \int_r^s 
\Big\{ \left(\tfrac{v}{r}\right)^{H-\frac{1}{2}}~(v-r)^{H-\frac{1}{2}}
+ r^{\frac{1}{2}-H}~(v-r)^{2H-1} \Big\}~dv \\
&\leq C~e^{s|\widetilde{b}'|_\infty} 
~s^{H-\frac{1}{2}}~r^{\frac{1}{2}-H}~(s-r)^{H+\frac{1}{2}}
+ r^{\frac{1}{2}-H}~(s-r)^{2H}.
\end{split}
\end{equation*}
It comes:
\begin{multline*}
\left|\int_0^s \partial_s K_H(s,r)~
\int_{0}^s\mathbf{D}_r\widetilde{b}(Y^H_{v})~dv~dr \right| \\
\leq C~|H-\tfrac{1}{2}|~e^{s|\widetilde{b}'|_\infty}
\int_0^s 
\left(\tfrac{s}{r}\right)^{H-\frac{1}{2}}(s-r)^{H-\frac{3}{2}} 
~( s^{H-\frac{1}{2}}~r^{\frac{1}{2}-H}~(s-r)^{H+\frac{1}{2}}
+ r^{\frac{1}{2}-H}~(s-r)^{2H})~dr.
\end{multline*}
By using the change of variable $r=s\theta$ we get
$$ \int_0^s s^{2H-1}~(s-r)^{2H-1}~r^{1-2H}~dr
= s^{2H} \int_0^1 (1-\theta)^{2H-1}~\theta^{1-2H}~d\theta
= C_H~s^{2H} $$
and
$$ \int_0^s s^{H-\frac{1}{2}}~(s-r)^{3H-\frac{3}{2}}~r^{1-2H}~dr
= s^{2H} \int_0^1 
(1-\theta)^{3H-\frac{3}{2}}~\theta^{1-2H}~d\theta = C_H~s^{2H}. $$
As $H<1$ we can bound $C_H~s^{2H}$ from above by $C_H~(1+s^2)$. 
That ends the proof.
\end{proof}

We now proceed to the proof of Theorem~\ref{prop:majgap_0}. We aim to 
prove that both $|I_1(\lambda)|$ and $|I_2(\lambda)|$ are bounded from 
above by the right-hand side 
of~\eqref{ineq:Laplace-transform-main-theorem}.

\begin{remark} \label{rk:estimations-optimales}
The remark~\ref{rk:estimation-I1-optimale} below shows that
 in~\eqref{ineq:Laplace-transform-main-theorem} 
the convergence rates w.r.t. $|H-\frac{1}{2}|$ and  $\rate_{1}(\mathbb{Y}-y_{0},{\lambda})$ cannot be improved.
Notice that $\mathcal{M}_{2}$ and $\mathcal{M}_{4}$ decay at the same rate as $\mathcal{M}_{1}$ when~$\lambda\to \infty$.

We emphasize that  $I_1(\lambda)$ does not depend on 
the way the original function~$\wl$ is extended since it
depends on the path of $Y^H$ up to time~$\tau^H_Y$.
In contrast, $I_2(\lambda)$ depends on the chosen extension.
Our  choice allows us to obtain estimates in terms of~$\rate_{1}(\mathbb{Y}-y_{0},{\lambda})$
and does not prevent us to obtain the desired optimal rates.
\end{remark}

\begin{remark} \label{rk:commentaire-H'}
We again come back to the discussion initiated in the Introduction to 
justify 
the choice of the Markov model as the proxy model. If the proxy model 
were driven by a fractional Brownian motion with Hurst 
index~$H'\neq\frac{1}{2}$, in view 
of~\eqref{eq:diff-transf-Laplace}, the 
equality~\eqref{eq:intro-pivot-H'} would lead to estimate
$$ \EE\Big[ 
\left.\delta_H^{(N)}\Big(\indi{[0,t]}(\cdot) e^{-\lambda \cdot} 
\wl'(Y_\cdot^H) \Big)\right|_{t=\tau^H_Y\wedge N}\Big]
- \EE\Big[ 
\left.\delta_{H'}^{(N)}\Big(\indi{[0,t]}(\cdot) e^{-\lambda \cdot} 
\wl'(Y_\cdot^{H'}) \Big)\right|_{t=\tau_Y^{H'}\wedge N}\Big], $$
in terms of~$|H-H'|$. We do not see how to solve this issue.

However, as in Remark~\ref{remark:extension-H-H'}
we can compare the rough and the non-rough models as follows: for all $H\leq\frac{1}{2}\leq H'$, for some constant $C_{H,H'}(\tilde{\lambda})$
one has
\begin{align*}
|\EE e^{-\lambda \tau^H_{Y}} - \EE e^{-\lambda \tau^{H'}_{Y}}| \leq C_{H,H'}(\tilde{\lambda})\, (H'-H)\, (\mathcal{M}_{p}(\lambda,H)+\mathcal{M}_{p}(\lambda,H')).
\end{align*}
\end{remark}

\subsection{Estimate on $I_1(\lambda)$ defined as in 
\eqref{eq:diff-transf-Laplace}}\label{subsec:I1}

Applying Fubini's theorem we get
\begin{equation} \label{eq:I_1-apres-Fubini}
I_1(\lambda) = \int_0^\infty e^{-\lambda s}~\EE\left(\Delta(s,H) 
~\indi{\{\tau^H_Y\geq s\}}~\wl''(Y_s^H)\right)~ds,
\end{equation}
where $\Delta(s,H)$ is defined by~(\ref{eq:Delta}).

\begin{proposition}\label{prop:I1'}
As in Proposition~\ref{prop:moments} set
$ \widetilde{\lambda} :=  \lambda-|\widetilde{b}'|_\infty$.
Suppose $\widetilde{\lambda}>0$. One has
\begin{equation*}
|I_1(\lambda)| \leq 
C_H \frac{1+\lambda}{1\wedge\widetilde{\lambda}^3}
~|H-\tfrac{1}{2}|~\rate_{1}(\mathbb{Y}-y_{0},{\lambda}).
\end{equation*}
\end{proposition}

\begin{proof}
In view of Inequalities~\eqref{ineq:Delta} and~\eqref{maj:wl-d2} 
one has
\begin{equation*}
\begin{split}
|I_1(\lambda)| &= \left|\int_0^\infty e^{-\lambda s} 
~\EE\left(\Delta(s,H) \, \indi{\{\tau^H_Y \geq s\}} \wl''(Y_s^H)\right)ds\right| \\
&\leq C~(1+\lambda)~\int_0^\infty e^{-\lambda s} 
~\EE~e^{-|\mathbb{Y}-Y^H_s|\mathcal{R}(\lambda)}
~\big|Hs^{2H-1}-\tfrac{1}{2}\big|~ds  \\
&\quad + C_H~|H-\tfrac{1}{2}|~(1+\lambda)
\int_0^\infty e^{-(\lambda-|\widetilde{b}'|_\infty)s} 
~\EE~e^{-|\mathbb{Y}-Y^H_s|\mathcal{R}(\lambda)}
(1+s^{2})~ds\\
& \leq C~(1+\lambda)~\rate_1(\mathbb{Y}-y_0,{\lambda})
~\int_0^\infty e^{-\frac{1}{2}\widetilde{\lambda}s}~
\big|Hs^{2H-1}-\tfrac{1}{2}\big|~ds \\
&~~~~~+ C_H~|H-\tfrac{1}{2}|~(1+\lambda)
~\rate_1(\mathbb{Y}-y_0,{\lambda})
~\int_0^\infty e^{-\frac{\widetilde{\lambda}}{2}s}~(1+s^{2})~ds.
\end{split}
\end{equation*} 
Split the integral
$$ \int_0^\infty e^{-\frac{1}{2}\widetilde{\lambda}s}~
\big|Hs^{2H-1}-\tfrac{1}{2}\big|~ds $$
into integrals from $0$ to $\alpha := (\tfrac{1}{2H})^{\frac{1}{2H-1}}$
and from $\alpha$ to $+\infty$. This leads one to consider
\begin{equation*}
I_{11} := \text{sign}(H-\tfrac{1}{2}) \int_{0}^\alpha 
e^{-\frac{1}{2}\widetilde{\lambda}s}~ 
\left(\tfrac{1}{2}-Hs^{2H-1}\right)~ds
~\text{ and }~ I_{12} := \text{sign}(H-\tfrac{1}{2}) 
\int_{\alpha}^{+\infty} e^{-\frac{1}{2}\widetilde{\lambda}s}~ 
\left(Hs^{2H-1}- \tfrac{1}{2}\right)~ds.
\end{equation*}
As for $I_{11}$, integrate by parts and use that $1-\alpha^{2H-1} = 
\frac{1}{H}(H-\frac{1}{2})$. It comes:
\begin{align*}
I_{11} = \tfrac{1}{2}~\text{sign}(H-\tfrac{1}{2})~\alpha~\big( 
\tfrac{1}{H}(H-\tfrac{1}{2})~e^{-\frac{1}{2}\widetilde{\lambda}\alpha} 
+ \tfrac{\widetilde{\lambda}}{2} \int_{0}^\alpha 
e^{-\frac{1}{2}\widetilde{\lambda}s}~(s-s^{2H})~ds \big).
\end{align*}
Observe that $\alpha$ is a bounded function of~$H\in(\tfrac{1}{4},1)$. 
In addition, for any~$s\in[0,\alpha]$ apply the Mean Value theorem to 
the map $H\in(\tfrac{1}{4},1)\mapsto s-s^{2H} 
= s-s^{1+2(H-\frac{1}{2})}$ around the 
point~$H=\tfrac{1}{2}$. It comes:
\begin{equation}\label{eq:I1-2}
\begin{split}
I_{11} &\leq C~|H-\tfrac{1}{2}|
+ |H-\tfrac{1}{2}|~\sup_{s\in [0,\alpha]} \sup_{\gamma\in 
(-\frac{1}{4},\frac{1}{2})} (|\log(s)|~s^{1+2\gamma})~ 
\widetilde{\lambda}\int_{0}^\alpha e^{-\frac{1}{2}\widetilde{\lambda}s} 
~ds \\
&\leq C~|H-\tfrac{1}{2}|.
\end{split}
\end{equation}
As for $I_{12}$, we integrate by parts and apply the Mean Value
theorem to the map $H\mapsto s-s^{2H}$. In addition, we use that
$$ \exists C>0,~\forall s>0,~\forall 
\gamma\in(-\tfrac{1}{4},\tfrac{1}{2}),
~~~|\log(s)|~(s^{1+2\gamma}) \leq C~(1+s^2). $$
We get:
\begin{equation}\label{eq:I1-3}
\begin{split}
I_{12}&\leq C~|H-\tfrac{1}{2}|
+ C~|H-\tfrac{1}{2}|~\widetilde{\lambda}~\int_\alpha^\infty
~(1+s^2) 
~e^{-\frac{1}{2}\widetilde{\lambda}s}~ds \\
&\leq C~|H-\tfrac{1}{2}|~(1+\tfrac{1}{\widetilde{\lambda}^2}).
\end{split}
\end{equation}
To conclude, it remains to gather the inequalities~\eqref{eq:I1-2} and 
~\eqref{eq:I1-3} with
$$ \forall H\in(\tfrac{1}{4},1),~~~\int_0^\infty 
e^{-\frac{\widetilde{\lambda}}{2}s}~
(1+s^{2})~ds
\leq C~(1+\tfrac{1}{\widetilde{\lambda}^3}). $$
\end{proof}

\begin{remark} \label{rk:estimation-I1-optimale}
When $Y^H$ reduces to $B^H$, that is, when 
$\widetilde{b}\equiv0$, $F(y)=y$ and $y_0\equiv0$, in view 
of~\eqref{eq:I_1-apres-Fubini} and~\eqref{eq:ODE_Y} one has
$$ I_1(\lambda) = 
2\lambda\int_0^\infty e^{-\lambda s}~(H~s^{2H-1}-\tfrac{1}{2})
~\EE\left(\indi{\{\tau^H_B\geq s\}}~\wl(B_s^H)\right)~ds. $$
One cannot compute the exact value of the right-hand side since the 
joint law of~$(\tau^H_{B},B^H_s)$ is unknown. The preceding proof consists
in replacing the function~$\wl(y)$ with a continuous extension on the 
whole real line which decays fast to~0 when~$y$ tends to $+\infty$. In 
view of~\eqref{Laplace:brownien}, a natural choice
is~$e^{-|1-y|\sqrt{2\lambda}}$. It leads to estimate
$$ 2\lambda\int_0^\infty e^{-\lambda s}~
(H~s^{2H-1}-\tfrac{1}{2})~\EE\left(\indi{\{\tau^H_Y\geq s\}}
~e^{-|1-B^H_s|\sqrt{2\lambda}}\right)~ds
\simeq 2\lambda\int_0^\infty e^{-\lambda s}~(H~s^{2H-1}-\tfrac{1}{2})
~\EE e^{-|1-B^H_s|\sqrt{2\lambda}}~ds. $$
The calculation done above to estimate~$I_{11}$ and $I_{12}$ shows that 
the preceding quantity is of the order 
~$|H-\tfrac{1}{2}|~\frac{1}{1\wedge\widetilde{\lambda}^2}
~\rate_{1}(\mathbb{Y},{\lambda})$.
\end{remark}

\subsection{Estimate on $I_2(\lambda)$ defined as in 
\eqref{eq:diff-transf-Laplace}}\label{subsec:proof2II}

Recall that
$$ I_2(\lambda) := \lim_{N\rightarrow +\infty} \EE\Big[ 
\left.\delta_H^{(N)}\left(\indi{[0,t]}(\cdot) e^{-\lambda \cdot} 
\wl'(Y_\cdot^H) \right)\right|_{t=\tau^H_Y\wedge N}\Big]. $$

The aim of this section is to prove the following proposition.
\begin{proposition} \label{prop:I2'}
Suppose $\widetilde{\lambda} := \lambda-|\widetilde{b}'|_\infty >0$. Then
\begin{align*} \label{ineq:resultI2}
|I_2(\lambda)| 
&\leq 
C_H~|H-\tfrac{1}{2}|
~\frac{(1+\lambda)^2}{1\wedge\widetilde{\lambda}^3}
\Big( \left(\rate_{2}(\mathbb{Y}-y_0,{\lambda}) \right)
^{\frac{H\wedge\frac{1}{2}}{6}}
+ \left(\rate_{4}(\mathbb{Y}-y_0,{\lambda})\right)
^{\frac{H\wedge\frac{1}{2}}{12}} \Big).
\end{align*}
\end{proposition}

We emphasize that the optional stopping theorem does not hold true for 
the Skorokhod integrals $\delta_H^{(N)}$ when $H\neq\frac{1}{2}$. 
However, applying this theorem to standard It\^o integrals provides
$$\forall N>0,\quad 
{\EE\Big(\left.\bm{\delta^{(N)}}
(\indi{[0,t]}(\sbullet) e^{-\lambda\sbullet} 
\wl'(Y^H_\sbullet))\right|_{t=N\wedge \tau^H_Y} \Big) = 0}.$$ 

We thus are led to introduce the centering term 
$\bm{\delta^{(N)}}(\indi{[0,t]}(\sbullet) e^{-\lambda\sbullet} 
\wl'(Y^H_\sbullet))\Big|_{t=N\wedge \tau^H_Y}$, which is crucial
to get an estimate on~$I_2(\lambda)$ of the order $|H-\frac{1}{2}|$:
\begin{multline*}
\left|\EE\Big[\left. \delta_H^{(N)} \big(
\indi{[0,t]}(\sbullet) e^{-\lambda\sbullet} \wl'(Y^H_\sbullet)
\big)\right|_{t=\tau^H_Y\wedge N} 
- \left. \bm{\delta^{(N)}}
\big( \indi{[0,t]}(\sbullet) e^{-\lambda\sbullet}  
\wl'(Y^H_\sbullet)\big)\right|_{t=\tau^H_Y\wedge N}\Big]\right| \\
= \left|\EE\Big[\left. \bm{\delta^{(N)}}
\big( \{ K_{H,N}^*-\text{Id}\}
(\indi{[0,t\wedge N]}(\sbullet) e^{-\lambda\sbullet} 
\wl'(Y^H_\sbullet))\big)\right|_{t=\tau^H_Y\wedge N}\Big] \right|.
\end{multline*}
Define the field $\{U^{(N)}_t(v),v\geq0,t>0\}$ 
and the process $\{\Upsilon^{(N)}_t, t>0\}$ by
\begin{equation} \label{def:U-Nt}
U^{(N)}_t(v) := 
\{K_{H,N}^* - \textrm{Id}\}\big( \indi{[0,t\wedge N]}(\sbullet) 
e^{-\lambda\sbullet} \wl'(Y^H_\sbullet) \big)(v)
\end{equation}
and
\begin{equation} \label{def:Upsilon-Nt}
\Upsilon^{(N)}_t := \bm{\delta^{(N)}}(U^{(N)}_t(\sbullet)).
\end{equation}
Let $[t]$ denote the integer part of $t$. As $\Upsilon^{(N)}_0=0$ 
for any $t>0$ we have
$$ \Upsilon^{(N)}_t = \Upsilon^{(N)}_t - \Upsilon^{(N)}_{[t]}
+ \sum_{n=1}^{[t]}(\Upsilon^{(N)}_n - 
\Upsilon^{(N)}_{n-1})~\indi{t\geq1}. $$
Therefore,
\begin{equation}\label{eq:1stBoundI2}
|I_2(\lambda)| 
\leq  \lim_{N\rightarrow \infty}~ \sum_{n=0}^{N-1}
\EE\sup_{t\in [n,n+1]}\left[| \Upsilon^{(N)}_t - 
\Upsilon^{(N)}_n|\right].
\end{equation}
In order to estimate the right-hand side of the preceding inequality
we now apply the following corollary of Garsia-Rodemich-Rumsey's lemma:

\begin{lemma}[Garsia-Rodemich-Rumsey]\label{lem:GRR}
Let $\{X_t, t\in[a,b]\}$ be an $\R$-valued continuous stochastic 
process. Then, for $p\geq 1$ and $q>0$ such that $pq>2$, 
\begin{align*}
\EE\Big( \sup_{t\in[a,b]} |X_t-X_a| \Big) &\leq C \frac{pq}{pq-2} 
(b-a)^{q-\frac{2}{p}}~\EE\Big[ \Big( \int_a^b \int_a^b 
\frac{|X_s-X_t|^p}{|t-s|^{pq}}\ ds\ dt\Big)^{\frac{1}{p}} \Big] \\
&\leq  C \frac{pq}{pq-2} (b-a)^{q-\frac{2}{p}}~\Big(\int_a^b \int_a^b 
\frac{\EE\Big(|X_t-X_s|^p\Big)}{|t-s|^{pq}}~ds~dt\Big)^{\frac{1}{p}},
\end{align*}
provided the right-hand side in each line is finite.
\end{lemma}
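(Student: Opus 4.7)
The plan is to derive this as a direct specialization of the classical Garsia-Rodemich-Rumsey lemma to power-type weight functions. I would start from the general GRR statement: for a continuous real-valued process $X$ on $[a,b]$ and continuous strictly increasing $\Psi$, $\varphi$ vanishing at $0$, if $B := \int_a^b\int_a^b \Psi\!\left(|X_s - X_t|/\varphi(|s-t|)\right)\, \dd s\, \dd t$ is finite, then $|X_s - X_t| \leq 8 \int_0^{|s-t|} \Psi^{-1}(4 B / u^2)\, \dd \varphi(u)$ for all $s,t \in [a,b]$.

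I would then apply this with $\Psi(x) = x^p$ and $\varphi(u) = u^q$, so that $\Psi^{-1}(y) = y^{1/p}$ and $\dd \varphi(u) = q\, u^{q-1}\, \dd u$. The right-hand integral evaluates to $8\,(4B)^{1/p}\, q \int_0^{|s-t|} u^{q - 1 - 2/p}\, \dd u = \tfrac{8 \cdot 4^{1/p}\, q}{q - 2/p}\,|s-t|^{q - 2/p}\, B^{1/p}$. The convergence of this $u$-integral near $0$ requires $q - 1 - 2/p > -1$, i.e., $pq > 2$, which is precisely the hypothesis. Using the algebraic identity $q/(q - 2/p) = pq/(pq-2)$ yields the pointwise bound $|X_s - X_t| \leq C\, \tfrac{pq}{pq-2}\, |s-t|^{q - 2/p}\, B^{1/p}$ almost surely, for a universal constant $C$.

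To finish, I would specialize $s = a$ and take the supremum over $t \in [a,b]$; since $|t-a| \leq b-a$, the right-hand side is dominated by $C\,\tfrac{pq}{pq-2}\,(b-a)^{q-2/p}\, B^{1/p}$. Taking expectations and substituting the explicit form of $B$ yields the first displayed inequality. For the second inequality, Jensen's inequality (the map $x \mapsto x^{1/p}$ is concave for $p \geq 1$) lets me bring the expectation inside the $1/p$-th power, and Fubini's theorem interchanges it with the double integral.

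The main, rather minor, obstacle is simply keeping track of universal constants and verifying the integrability condition $pq > 2$ at the one place where it matters; everything else is bookkeeping once the classical GRR inequality is invoked.
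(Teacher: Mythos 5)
Your proposal is correct and follows essentially the same route as the paper: both apply the general Garsia--Rodemich--Rumsey lemma with $\Psi(x)=x^p$ and $\varphi(u)=u^q$, use $pq>2$ for the convergence of the $u$-integral, and obtain the first line by taking the supremum and expectations. For the second line you invoke Jensen's inequality (concavity of $x\mapsto x^{1/p}$) together with Fubini, while the paper cites H\"older's inequality; these are the same elementary estimate, so there is nothing substantive to distinguish.
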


\begin{proof}
With the notations of \cite[p.353-354]{Nualart}, apply the general 
Garsia-Rodemich-Rumsey lemma with $\psi(x) = x^p$ and $p(x)=x^q$ to 
obtain the first line. The second line results from H\"older's 
inequality.
\end{proof}

We thus obtain:
\begin{equation}\label{eq:1stBoundI2-bis}
|I_2(\lambda)| 
\leq  \lim_{N\rightarrow \infty}~ \sum_{n=0}^{N-1}
C \frac{pq}{pq-2} \Big(\int_n^{n+1} \int_n^{n+1} 
\frac{\EE\big(|\Upsilon^{(N)}_t-\Upsilon^{(N)}_s|^p\big)}
{|t-s|^{pq}}\ ds~dt\Big)^{\frac{1}{p}},
\end{equation}
for any $p\geq 1$ and $q>0$ such that $pq>2$.

We now need to estimate moments of 
$|\Upsilon^{(N)}_t-\Upsilon^{(N)}_s|$ with two different constraints. 
On 
the one hand, to get finiteness of the right-hand side 
of~\eqref{eq:1stBoundI2-bis} it is natural to choose the value of~$pq$ 
close to~2 and then to choose $p$ large to allow the $p$-th 
moment of $|\Upsilon^{(N)}_t-\Upsilon^{(N)}_s|$ to be of order 
$(t-s)^{\gamma(p)}$ 
with a large enough power~$\gamma(p)$. On the other hand, to get a 
convergence rate of $|I_2(\lambda)|$ in terms of $|H-\frac{1}{2}|$,
$\lambda$ and~$|\mathbb{Y}-y_0|$ it 
is convenient to consider the second moment of 
$|\Upsilon^{(N)}_t-\Upsilon^{(N)}_s|$ whose convergence rate to~0
can be obtained by using the
explicit value of $(K_H(t,v)-1)^2$ (see the term $J_1$ in 
the proof of Lemma~\ref{lem:gapKH} and Lemma~\ref{lem:EstDeriv}),
whereas the estimation of other moments 
of~$|\Upsilon^{(N)}_t-\Upsilon^{(N)}_s|$ would necessarily involve the 
hardly tractable terms~$(K_H(t,v)-1)^\gamma$ with $\gamma>2$.

The preceding leads us to use the obvious inequality
\begin{equation*}
\forall p\geq 2,
~~\EE\left(|\Upsilon^{(N)}_t-\Upsilon^{(N)}_s|^p\right) 
\leq \left(\EE|\Upsilon^{(N)}_t-\Upsilon^{(N)}_s|^{2(p-1)}
\right)^{\frac{1}{2}} \times 
\left\|\Upsilon^{(N)}_t-\Upsilon^{(N)}_s\right\|_2.
\end{equation*}
In Subsections~\ref{subsec:lemma-L2-estimates} 
and~\ref{subsec:LpEstimates} we respectively prove that 
for any $0<s<t<N$ with $0<t-s<1$ we have
\begin{equation*}
\begin{split}
\left\|\Upsilon^{(N)}_t-\Upsilon^{(N)}_s\right\|_2
\leq C_H~|H-\tfrac{1}{2}|&~(t-s)^{H\wedge\frac{1}{2}} 
~(1+|\log(t-s)|)~(1+\lambda)^2
~(1+t^2)~e^{-\frac{1}{2}\widetilde{\lambda}s}
\\
&\times\Big(
\left(\rate_{2}(\mathbb{Y}-y_0,{\lambda}) \right)^\frac{1}{2}
+\left(\rate_{4}(\mathbb{Y}-y_0,{\lambda})\right)^\frac{1}{4}
\Big)
\end{split}
\end{equation*}
and, for any $p\geq 2$,
\begin{multline*}
\left(\EE|\Upsilon^{(N)}_t-\Upsilon^{(N)}_s|^{2(p-1)}
\right)^{\frac{1}{2}}  \\
\leq C_H~|H-\tfrac{1}{2}|^{p-1}~(t-s)^{(p-1)(H\wedge\frac{1}{2})}
~(1+|\log(t-s)|)^{p-1}~(1+\lambda)^{2(p-1)}
~(1+t^2)^{p-1}~e^{-(p-1)\widetilde{\lambda}s} .
\end{multline*}
Coming back to~\eqref{eq:1stBoundI2-bis} and for instance choosing
$p=\frac{3}{H\wedge\frac{1}{2}}$ and $q=H\wedge\frac{1}{2}$ we get
\begin{align*}
|I_2(\lambda)| 
&\leq C_H~|H-\tfrac{1}{2}|
~(1+\lambda)^2
~\left((\rate_{2}(\mathbb{Y}-y_0,{\lambda}))
^{\frac{H\wedge\frac{1}{2}}{6}}
+ (\rate_{4}(\mathbb{Y}-y_0,{\lambda}))
^{\frac{H\wedge\frac{1}{2}}{12}}\right) \\
&\quad\quad \times 
\lim_{N\rightarrow \infty}~\sum_{n=0}^{N-1} 
~(1+(n+1)^2)~e^{-\widetilde{\lambda}n}
\left(\int_{n}^{n+1}\int_{n}^{n+1}
\big(1+ |\log(t-s)|\big)^{\frac{3}{H\wedge \frac{1}{2}}}~ds~ 
dt\right)^{\frac{H\wedge\frac{1}{2}}{3}} 
\\
 &\leq  C_H~|H-\tfrac{1}{2}|
~\frac{(1+\lambda)^2}{1\wedge \widetilde{\lambda}^3}
~\left( 
(\rate_{2}(\mathbb{Y}-y_0,{\lambda}))^{\frac{H\wedge\frac{1}{2}}{6}}
+ (\rate_{4}(\mathbb{Y}-y_0,{\lambda}))
^{\frac{H\wedge\frac{1}{2}}{12}}\right).
\end{align*}

\subsection{An elementary proposition}\label{subsec:elementaryProp}

In this subsection we prove the following elementary result which will 
be often used in the sequel.

\begin{proposition} \label{prop:diverses-integrales}
Let $0\leq S<T$. Let $(\xi_\theta)$ be a square integrable 
process on~$[S,T]$. 

\begin{enumerate}[label=(\roman*)]
\item Let $f$ be an integrable function on~$[S,T]$. 
One has
\begin{equation} \label{integrale-moment}
\EE\Big( \int_S^T \xi_\theta~f(\theta)~d\theta \Big)^2
\leq \sup_{S\leq \theta\leq T} \EE((\xi_\theta)^2)
~\Big(\int_S^T |f(\theta)|~d\theta\Big)^2.
\end{equation}

\item 
Suppose in addition that $0<T-S<1$ 
and
$\sup_{S\leq \theta\leq T}\EE((\xi_\theta)^4)<\infty$.
Let $\frac{1}{4}<H<1$.
Let $\phi(\cdot,S)$ be a Lebesgue measurable function such that 
$(\cdot-S)^H~|\phi(\cdot,S)|$ is integrable on~$[S,T]$. Then
\begin{equation} \label{integrale-moment-increment-f}
\EE\left( \int_S^T \xi_\theta
~(Y_\theta^H - Y_S^H)~\phi(\theta,S)~d\theta \right)^2
\leq C \sup_{S\leq \theta\leq T} \sqrt{\EE((\xi_\theta)^4)}
~\Big( \int_S^T (\theta - S)^H~|\phi(\theta,S)|~d\theta \Big)^2.
\end{equation}
\end{enumerate}
\end{proposition}

\begin{proof}
By Cauchy-Schwarz formula,
$$ \Big( \int_S^T \xi_\theta~f(\theta)~d\theta \Big)^2
\leq \int_S^T |f(\theta)|~d\theta~~\int_S^T (\xi_\theta)^2~|f(\theta)|
~d\theta. $$
This provides~\eqref{integrale-moment}. 

Similarly,
\begin{multline*}
\left( \int_S^T 
~\frac{\xi_\theta~(Y_\theta^H - Y_S^H)}{(\theta-S)^{H}}
~(\theta-S)^H~\phi(\theta,S)~d\theta \right)^2 \\
\leq \int_S^T 
~\frac{(\xi_\theta)^2~(Y_\theta^H - Y_S^H)^2}{(\theta-S)^{2H}}
~(\theta-S)^H~|\phi(\theta,S)|~d\theta
~~\int_S^T (\theta-S)^H~|\phi(\theta,S)|~d\theta.
\end{multline*}
From~\eqref{eq:fSDE_Lamperti} 
and~\eqref{eq:defCov}
 we deduce that
$$ \EE( (\xi_\theta)^2~(Y_\theta^H - Y_S^H)^2)
\leq C~\sqrt{\EE( (\xi_\theta)^4)}~((\theta-S)^{2H}+(\theta-S)^2). $$
To get~\eqref{integrale-moment-increment-f} it remains to use that 
$(\theta-S)^{2-2H} \leq 1$ since $0<T-S<1$ by hypothesis. 
\end{proof}

\subsection{
$L^2$-estimate on $(\Upsilon^{(N)}_t-\Upsilon^{(N)}_s)$}
\label{subsec:lemma-L2-estimates}

\begin{lemma} \label{L2-estimates}
Suppose $\frac{1}{4}<H<1$ and $H\neq\frac{1}{2}$. 

Let $\Upsilon^{(N)}$ be defined as in~\eqref{def:Upsilon-Nt}.
For any $0<s<t<N$ with $0<t-s<1$ we have
\begin{equation} \label{ineq:Upsilon_t-Upsilon_s}
\begin{split}
\left\|\Upsilon^{(N)}_t-\Upsilon^{(N)}_s\right\|_2
\leq C_H~|H-\tfrac{1}{2}|&~(t-s)^{H\wedge\frac{1}{2}} 
~(1+|\log(t-s)|)~(1+\lambda)^2
~(1+t^2)~e^{-\frac{1}{2}\widetilde{\lambda}s} \\
&\times\Big(
\left(\rate_{2}(\mathbb{Y}-y_0,{\lambda}) \right)^\frac{1}{2}
+\left(\rate_{4}(\mathbb{Y}-y_0,{\lambda})\right)^\frac{1}{4} \Big).
\end{split}
\end{equation}
\end{lemma}

\begin{proof}
Recall~\eqref{def:U-Nt} and~\eqref{eq:KHbis}. For any 
$0\leq s\leq t\leq N$ and $v$ in $[0,N]$ one has
\begin{align*}
U^{(N)}_t(v)-U^{(N)}_s(v) &= \indi{(s,t]}(v)~K_H(t,v) 
~\wl'(Y^H_v)~e^{-\lambda v} \\
&+ \int_v^t  \partial_\theta K_H(\theta,v) 
~(\indi{(s,t)}(\theta)~\wl'(Y_\theta^H)~e^{-\lambda \theta} 
- \indi{(s,t)}(v)~\wl'(Y_v^H)~e^{-\lambda v})~d\theta \\
&\quad - \indi{(s,t]}(v)~\wl'(Y_v^H)~e^{-\lambda v}.
\end{align*}
Therefore,
\begin{equation} \label{eq:expansion-Ut-Us}
\begin{split}
U^{(N)}_t(v)-U^{(N)}_s(v) &= \indi{(s,t]}(v)~(K_H(t,v) - 1) 
~\wl'(Y^H_v)~e^{-\lambda v} \\
&\quad+ \indi{(s,t]}(v)~\wl'(Y_v^H) \int_v^t 
\partial_\theta K_H(\theta,v)
~(e^{-\lambda\theta} - e^{-\lambda v})~d\theta \\
&\quad+ \indi{(s,t]}(v)\int_v^t 
\partial_\theta K_H(\theta,v)
~(\wl'(Y_\theta^H) -\wl'(Y_v^H))~e^{-\lambda\theta}~d\theta \\
&\quad+ \indi{(0,s)}(v)\int_s^t  
\partial_\theta K_H(\theta,v)
~\wl'(Y_\theta^H)~e^{-\lambda \theta}~d\theta \\
&=: J^{(1)} + J^{(2)} + J^{(3)} + J^{(4)}.
\end{split}
\end{equation}

In view of Meyer's inequalities (\cite[Prop.3.2.1]{Nualart}) we 
have
$$ \big\|\Upsilon^{(N)}_t-\Upsilon^{(N)}_s\big\|_2
\leq C \, \sum_{i=1}^4 \Big\{\int_0^N \EE( |J^{(i)}|^2)~dv 
\Big\}^{\frac{1}{2}}
+ C \, \sum_{i=1}^4 \Big\{ \EE \int_0^N \int_0^N 
|\mathbf{D}_r 
J^{(i)}|^2~dr~dv \Big\}^{\frac{1}{2}}. $$
The first term in the right-hand side is simpler than the second one
and leads to even better estimates.
We thus only detail the calculations which concern the second term. 
We will use the two following inequalities which result
from~\eqref{D_rY} and~\eqref{maj:wl-d2}:

\begin{align} \label{ineq:DrW'YH}
|\mathbf{D}_r \wl'(Y^H_\theta)|~e^{-\lambda \theta}
&\leq \indi{\{r\leq \theta\}} 
~|\wl''(Y^H_\theta)|~e^{-\lambda\theta+|\widetilde{b}'|_\infty\theta}
~\Big(K_H(\theta,r) + C~(\theta-r)^{H+\frac{1}{2}}
~\indi{\{H<\frac{1}{2}\}}\Big) \nonumber \\
&\leq C~\indi{\{r\leq \theta\}}~(1+\lambda)~
~e^{-\widetilde{\lambda}\theta
-|\mathbb{Y}-Y^H_\theta| \mathcal{R}(\lambda)}
~(K_H(\theta,r) + (\theta-r)^{H+\frac{1}{2}}
~\indi{\{H<\frac{1}{2}\}}),
\end{align}
from which
\begin{multline} \label{ineq:vrW'YH_V-carre}
\EE((\mathbf{D}_r \wl'(Y^H_\theta))^2)~e^{-2\lambda \theta} \\
\leq C~\indi{\{r\leq \theta\}}
~(1+\lambda)^2~\rate_{2}(\mathbb{Y}-y_0,{\lambda}) 
~\left(|K_H(\theta,r)|^2 
+ (\theta-r)^{2H+1}~\indi{\{H<\frac{1}{2}\}}\right)
~e^{-\widetilde{\lambda} \theta}.
\end{multline}

\vspace{0.3cm}
\fbox{\textbf{A bound from above for $\int_0^N\int_0^N
\EE( |\mathbf{D}_r J^{(1)}|^2 )~dr~dv$.}}

\vspace{0.1cm}

We have
$$ \mathbf{D}_r J^{(1)} = \indi{(s,t]}(v)~\indi{r\leq v} 
~\mathbf{D}_r(\wl'(Y^H_v))~(K_H(t,v) - 1)~e^{-\lambda v}, $$
from which
$$ \int_0^N\int_0^N \EE( |\mathbf{D}_r J^{(1)}|^2 )~dr~dv 
= \int_s^t \Big(\int_0^v \EE((\mathbf{D}_r \wl'(Y^H_v))^2)~dr \Big)
~(K_H(t,v) - 1)^2~e^{-2\lambda v}~dv. $$
Now, successively use~\eqref{ineq:vrW'YH_V-carre} 
and~\eqref{eq:norme-L2-K_H} to get
\begin{equation*}
\int_s^t \int_0^v \EE (|\mathbf{D}_r J^{(1)}|^2)~dr~dv
\leq C~(1+\lambda)^2~\rate_{2}(\mathbb{Y}-y_0,{\lambda})
~\int_s^t (v^{2H} + v^{2H+2})~(K_H(t,v) - 1)^2
~e^{-\widetilde{\lambda} v}~dv.
\end{equation*}
Bound $v^{2H+2}$ in the right-hand side by $t^2v^{2H}$ and 
use~\eqref{ineq:gapKH} (see Lemma \ref{lem:gapKH} below) to 
conclude that
\begin{multline}\label{ineq:norme-L2-DrJ1} 
\int_0^N \int_0^N \EE (|\mathbf{D}_r J^{(1)}|^2)~dr~dv \\
\leq  C ~ \big|H-\tfrac{1}{2}\big|^2 ~ (t-s)^{(2H)\wedge 1}
~(1+ (\log(t-s))^2)~(1+\lambda)^2 ~\rate_{2}(\mathbb{Y}-y_0,{\lambda}) 
~(1+t^4)~e^{-\widetilde{\lambda} s}. 
\end{multline}

\vspace{0.5cm}
\fbox{\textbf{A bound from above for 
$\int_0^N\int_0^N\EE(|\mathbf{D}_r J^{(2)}|^2)~dr~dv$.}}

\vspace{0.1cm}
We have
$$ \mathbf{D}_r J^{(2)} = \indi{(s,t]}(v)~\indi{r\leq v}~
\mathbf{D}_r(\wl'(Y_v^H)) 
\int_v^t \partial_\theta K_H(\theta,v)
~(e^{-\lambda\theta} - e^{-\lambda v})~d\theta, $$
from which
$$ \int_0^N\int_0^N \EE( |\mathbf{D}_r J^{(2)}|^2 )~dr~dv 
= \int_s^t \Big( \int_0^v \EE((\mathbf{D}_r \wl'(Y^H_v))^2)~dr \Big)
~\Big( \int_v^t 
\partial_\theta K_H(\theta,v)
~(e^{-\lambda\theta} - e^{-\lambda v})~d\theta \Big)^2~dv. $$
Notice that $|e^{-\lambda\theta} - e^{-\lambda v}|
\leq (\theta-v)~e^{-\lambda v}$ for $0 < v \leq \theta$.
Combine this inequality with~\eqref{eq:deriv-K_H-simplifiee-H-petit} 
and~\eqref{eq:deriv-K_H-simplifiee-H-grand} in the appendix to get
$$ \int_v^t \left|\partial_\theta K_H(\theta,v)\right|
~|e^{-\lambda\theta} - e^{-\lambda v}|~d\theta 
\leq C~|H-\tfrac{1}{2}|~(t-v)^{H+\frac{1}{2}}
~\Big(1 + \frac{t^{H-\frac{1}{2}}}{v^{H-\frac{1}{2}}}
~\indi{\tfrac{1}{2}<H<1}\Big)~e^{-\lambda v}. $$
Bound $(t-v)$ by $(t-s)$ in the right-hand side. 
Then, as above, successively use~\eqref{ineq:vrW'YH_V-carre} 
and~\eqref{eq:norme-L2-K_H} to get
\begin{equation*}
\begin{split}
\int_0^N \int_0^N \EE |\mathbf{D}_r J^{(2)}|^2~dr~dv
&\leq C~|H-\tfrac{1}{2}|^2~(1+\lambda)^2
~\rate_{2}(\mathbb{Y}-y_0,{\lambda})~(t-s)^{2H+1}  \\
&~~~~~~\int_s^t (v^{2H} + v^{2H+2})
~\Big(1 + \frac{t^{2H-1}}{v^{2H-1}}~\indi{\frac{1}{2}<H<1}\Big)
~e^{-\widetilde{\lambda} v}~dv.
\end{split}
\end{equation*}
Notice that
$$ \forall s\leq v\leq t,
~~(v^{2H} + v^{2H+2})~\Big(1 + \frac{t^{2H-1}}{v^{2H-1}} \Big)
= (v+v^3)~(v^{2H-1}+t^{2H-1}) \leq C~(1+t^{2H+2}).$$
We thus have obtained:
\begin{equation} \label{ineq:norme-L2-DrJ2}
\int_0^N \int_0^N \EE |\mathbf{D}_r J^{(2)}|^2~dr~dv
\leq C~|H-\tfrac{1}{2}|^2~(t-s)^{2H+2}
~(1+\lambda)^2~\rate_{2}(\mathbb{Y}-y_0,{\lambda})
~(1+t^4)~e^{-\widetilde{\lambda} s}.
\end{equation}

\vspace{0.5cm}
\fbox{\textbf{A bound from above for 
$\int_0^N\int_v^N\EE(|\mathbf{D}_r J^{(3)}|^2)~dr~dv$.}}

\vspace{0.1cm}

In view of~\eqref{ineq:DrW'YH} we have
\begin{equation*}
\begin{split}
|\indi{v\leq r}~\mathbf{D}_r J^{(3)}| &= \indi{(s,t]}(v)~\indi{v\leq r} 
~\Big| \int_r^t \mathbf{D}_r \wl'(Y^H_\theta)
~\partial_\theta K_H(\theta,v)
~e^{-\lambda\theta}~d\theta \Big| \\
&\leq C~\indi{(s,t]}(v)~\indi{v\leq r}~(1+\lambda) \\
&~~~~~\int_r^t e^{-\widetilde{\lambda}\theta
-|\mathbb{Y}-Y^H_\theta| \mathcal{R}(\lambda)}
~(K_H(\theta,r) + (\theta-r)^{H+\frac{1}{2}}~\indi{\{H<\frac{1}{2}\}})
~|\partial_\theta K_H(\theta,v)|~d\theta.
\end{split}
\end{equation*}
Apply~\eqref{integrale-moment} with
$$ \xi_\theta\equiv e^{-\frac{1}{2}\widetilde{\lambda}\theta}
~e^{-|\mathbb{Y}-Y_\theta^H|~\mathcal{R}(\lambda)} $$
and
$$ f(\theta) \equiv \indi{r\leq\theta}~\left(K_H(\theta,r) 
+ (\theta-r)^{H+\frac{1}{2}}~\indi{\{H<\frac{1}{2}\}} \right)
~\partial_\theta K_H(\theta,v)
~e^{-\frac{1}{2}\widetilde{\lambda}\theta}. $$
It comes:
$$ \indi{v\leq r}~\EE(|\mathbf{D}_r J^{(3)}|^2)
\leq C~\indi{(s,t]}(v)~\indi{v\leq r}
~(1+\lambda)^2~\rate_2(\mathbb{Y}-y_0,{\lambda})
~\left(\int_r^t |f(\theta)|~d\theta\right)^2. $$
Now, in view of~\eqref{eq:deriv-K_H-simplifiee-H-petit} we have
$$ |f(\theta)| \leq 
\Big|\partial_\theta K_H(\theta,v)\Big|
~K_H(\theta,r)~e^{-\frac{1}{2}\widetilde{\lambda}r}
+ C~\indi{\{H<\frac{1}{2}\}}|H-\tfrac{1}{2}|
~(\theta-v)^{H-\frac{3}{2}}~(\theta-r)^{H+\frac{1}{2}}
~e^{-\frac{1}{2}\widetilde{\lambda}r}. $$
By using the definitions~\eqref{def:A(r,v)} 
and~\eqref{def:I-reg(v,r,t)} we get
\begin{equation*}
\begin{split}
\int_s^t\int_v^t \Big(\int_r^t |f(\theta)|~d\theta\Big)^2~dr~dv
&\leq C~e^{-\widetilde{\lambda}s}~\int_s^t \int_v^t 
(\mathcal{A}(v,r,t))^2~dr~dv \\
&~~~~~ + 
C~\indi{\{H<\frac{1}{2}\}}~|H-\tfrac{1}{2}|^2
~e^{-\widetilde{\lambda}s}
~\int_s^t \int_v^t (\mathcal{I}(v,r,t))^2~dr~dv.
\end{split}
\end{equation*}
In view of~\eqref{ineq:L2-maj-A(r,v)-H-grand}, 
\eqref{ineq:L2-maj-A(r,v)} and~\eqref{ineq:L2-maj-I-reg(v,r,t)} the 
right-hand side is bounded from above by
\begin{equation} \label{ineq:f(theta)-L2-J3}
\begin{cases}
C_H~|H-\tfrac{1}{2}|^2~(t-s)^{3-2H}~t^{6H-3}~e^{-\widetilde{\lambda}s}
~~~\text{when}~\tfrac{1}{2}<H<1, \\
C_H~|H-\tfrac{1}{2}|^2~\Big((t-s)^{4H}~((\log(t-s))^2+1) 
+ (t-s)^{4H+2} \Big)~e^{-\widetilde{\lambda}s}
~~~\text{when}~\tfrac{1}{4}<H<\tfrac{1}{2}.
\end{cases}
\end{equation}

As $H>\frac{1}{4}$ and $0<t-s<1$ we have thus obtained
\begin{equation} \label{ineq:norme-L2-DrJ3-partie-1-1}
\int_0^N\int_v^N \EE(|\mathbf{D}_r J^{(3)}|^2)~dr~dv
\leq C_H~|H-\tfrac{1}{2}|^2~(t-s)
~(1+\lambda)^2~\rate_2(\mathbb{Y}-y_0,{\lambda})
(1+t^3)~e^{-\widetilde{\lambda}s}.
\end{equation}

\vspace{0.5cm}
\fbox{\textbf{A bound from above for $\int_0^N\int_0^v 
\EE(|\mathbf{D}_r J^{(3)}|^2)~dr~dv$.}}

\vspace{0.1cm}
We have
$$ \indi{r\leq v}~\mathbf{D}_r J^{(3)} = \indi{r\leq v}~\indi{(s,t]}(v)
\int_v^t \partial_\theta K_H(\theta,v)
~(\wl''(Y_\theta^H)~\mathbf{D}_r Y_\theta^H 
-\wl''(Y_v^H)~\mathbf{D}_r Y_v^H)~e^{-\lambda\theta}~d\theta. $$
Insert and subtract $\wl''(Y_v^H)~\mathbf{D}_rY_\theta^H$ in the 
right-hand side. For $r\leq v$ set
$$ \mathbf{D}_r J^{(3)}_1 := \indi{(s,t]}(v)
\int_v^t \partial_\theta K_H(\theta,v)
~(\wl''(Y_\theta^H)-\wl''(Y_v^H))~\mathbf{D}_r Y_\theta^H 
~e^{-\lambda\theta}~d\theta $$
and
$$ \mathbf{D}_r J^{(3)}_2 := \indi{(s,t]}(v)
\int_v^t \partial_\theta K_H(\theta,v)
~\wl''(Y_v^H)~(\mathbf{D}_r Y_\theta^H - \mathbf{D}_r Y_v^H)
~e^{-\lambda\theta}~d\theta. $$

\vspace{0.5cm}

\textbf{(i) A bound for $\mathbf{D}_r J^{(3)}_1$.} In view 
of~\eqref{maj:diff-wl-d2-old} and~\eqref{D_rY}, for $r\leq v$ we have
\begin{equation*}
\begin{split}
|\mathbf{D}_r J^{(3)}_1|
&\leq C~\indi{(s,t]}(v)~(1+\lambda)^2
\int_v^t \left|\partial_\theta K_H(\theta,v)\right|
~\big( e^{-|\mathbb{Y}-Y_\theta^H|~\mathcal{R}(\lambda)} + 
e^{-|\mathbb{Y}-Y_v^H|~\mathcal{R}(\lambda)} \big)
~|Y_\theta^H - Y_v^H| \\
&~~~~~~~~~~~~~~~~~~~~~~~~~~~~~~~~~~~
\left\{ K_H(\theta,r) + (\theta-r)^{H+\frac{1}{2}} 
~\indi{\{H<\frac{1}{2}\}} \right\}
~e^{-\widetilde{\lambda}\theta}~d\theta.
\end{split}
\end{equation*}
Apply~\eqref{integrale-moment-increment-f} with
$$ \xi_\theta\equiv e^{-\frac{1}{2}\widetilde{\lambda}\theta}
~~\big( e^{-|\mathbb{Y}-Y_\theta^H|~\mathcal{R}(\lambda)} + 
e^{-|\mathbb{Y}-Y_v^H|~\mathcal{R}(\lambda)} \big) $$
and
$$ \phi(\theta,v) \equiv \indi{(s,t]}(v)
~|\partial_\theta K_H(\theta,v)| 
~\big\{  K_H(\theta,r)  + (\theta-r)^{H+\frac{1}{2}} 
~\indi{\{H<\frac{1}{2}\}} 
\big\}~e^{-\frac{1}{2}\widetilde{\lambda}\theta}. $$
It comes:
$$ \indi{r\leq v}~\EE(|\mathbf{D}_r J^{(3)}_1|^2)
\leq C~(1+\lambda)^4~\sup_{v\leq \theta\leq t} 
\sqrt{\EE((\xi_\theta)^4)}
~\left( \int_v^t (\theta - v)^H~\phi(\theta,v)~d\theta \right)^2. $$
Notice that
$$ \sup_{v\leq \theta\leq t} \EE((\xi_\theta)^4) 
\leq C~\rate_{4}(\mathbb{Y}-y_0,{\lambda}). $$
In addition, in view of~\eqref{eq:deriv-K_H-simplifiee-H-petit}, for 
any $\theta>v$ one has
$$ \phi(\theta,v) \leq 
\indi{(s,t]}(v) \Big\{
~|\partial_\theta K_H(\theta,v)| 
~K_H(\theta,r)  + C~|H-\tfrac{1}{2}|~(\theta-r)^{H+\frac{1}{2}} 
~(\theta-v)^{H-\frac{3}{2}}
~\indi{\{H<\frac{1}{2}\}}~\Big\} 
~e^{-\frac{1}{2}\widetilde{\lambda}\theta}. $$
Therefore, in view of~\eqref{def:A(r,v)} 
and~\eqref{def:I-diese(v,r,t)}, we have
\begin{multline*}
\int_s^t\int_0^v
\Big( \int_v^t (\theta - v)^H~\phi(\theta,v)~d\theta \Big)^2~dr~dv \\
\leq C~\int_s^t e^{-\widetilde{\lambda}v} 
\int_0^v (\mathcal{A}^\sharp(v,r,t))^2~dr~dv
+ C~\indi{\{H<\frac{1}{2}\}}~|H-\tfrac{1}{2}|^2
\int_s^t ~e^{-\widetilde{\lambda}v}
\int_0^v (\mathcal{I}^\sharp(v,r,t))^2~dr~dv.
\end{multline*}

We now use~\eqref{ineq:L2-maj-Adiese(r,v)-H-grand},
\eqref{ineq:L2-maj-Adiese(r,v)-H-petit}
and~\eqref{ineq:L2-maj-Idiese(v,r,t)}. As we are in the case $0<t-s<1$
the right-hand side of the preceding inequality is 
bounded from above by
$$ C_H~|H-\tfrac{1}{2}|^2~(t-s)^{4H\wedge(2H+1)}
~(1+t^{2H}+t^{4H-1}+t^{2H+2})~e^{-\widetilde{\lambda}s}. $$

We thus have obtained that
\begin{equation} \label{ineq:norme-L2-DrJ3-partie-1}
\int_0^N\int_0^v \EE(|\mathbf{D}_r J^{(3)}_1|^2)~dr~dv
\leq C_H~|H-\tfrac{1}{2}|^2~(t-s)^{4H\wedge(2H+1)}~(1+\lambda)^4
~\sqrt{\rate_{4}(\mathbb{Y}-y_0,{\lambda})}
~(1+t^4)~e^{-\widetilde{\lambda}s}.
\end{equation}

\vspace{0.5cm}

\textbf{(ii)} We now turn to~$\mathbf{D}_r J^{(3)}_2$.
In view of~\eqref{maj:wl-d2-old} and~\eqref{ineq:diff-D_rY}, 
for $r\leq v$ we have 
\begin{equation*}
\begin{split}
|\mathbf{D}_r J^{(3)}_2|
&\leq C~\indi{(s,t]}(v)~(1+\lambda) 
\int_v^t \left|\partial_\theta K_H(\theta,v)\right|
~e^{-\widetilde{\lambda}\theta} 
~e^{-|\mathbb{Y}-Y_v^H|~\mathcal{R}(\lambda)} \\
&~~~~~~~~~~~~~~~~~~~~~~~~~~~~~~
\left\{ |K_H(\theta,r) - K_H(v,r)|
+ (\theta-v) \left(K_H(v,r)+(v-r)^{H+\frac{1}{2}} 
~\indi{\{H<\frac{1}{2}\}}\right)\right\}~d\theta.
\end{split}
\end{equation*}
In order to be in a position to again use our estimate on
$\mathcal{A}^\sharp(v,r,t)$ and $\mathcal{I}^\sharp(v,r,t)$ we 
replace $K_H(v,r)$ by $K_H(v,r)-K_H(\theta,r)+K_H(\theta,r)$ and we
bound $(v-r)^{H+\frac{1}{2}}$ from above by 
$(\theta-r)^{H+\frac{1}{2}}$. In addition, we 
use~\eqref{eq:deriv-K_H-simplifiee-H-petit} and the obvious
inequalities $\theta-v \leq (\theta-v)^H$ and 
$(\theta-v)^{H-\frac{1}{2}} \leq (\theta-v)^{2H-\frac{3}{2}}$
for any $0<\theta-v<t-s<1$ and $\tfrac{1}{4}<H<1$.
This leads us to apply~\eqref{integrale-moment} with
$$ \xi_\theta\equiv e^{-\frac{1}{2}\widetilde{\lambda}\theta}
~ e^{-|\mathbb{Y}-Y_v^H|~\mathcal{R}(\lambda)}$$
and
\begin{equation*}
\begin{split}
f(\theta) &\equiv  
C~\indi{(s,t]}(v)~\indi{r\leq v} \Big\{
|\partial_\theta K_H(\theta,v)|
~|K_H(\theta,r) - K_H(v,r)| \\
&~~~+ |\partial_\theta K_H(\theta,v)|
~K_H(\theta,r)~(\theta-v)^H +
|H-\tfrac{1}{2}|~(\theta-v)^{2H-\frac{3}{2}}~(\theta-r)^{H+\frac{1}{2}}
~\indi{\{H<\frac{1}{2}\}} \Big\}
~e^{-\frac{1}{2}\widetilde{\lambda}\theta}.
\end{split}
\end{equation*}
It comes:
$$ \EE(|\mathbf{D}_r J^{(3)}_2|^2)
\leq C~(1+\lambda)^2~\sup_{v\leq \theta\leq t} \EE((\xi_\theta)^2)
\Big(\int_v^t f(\theta)~d\theta\Big)^2. $$
Notice that
$$ \sup_{v\leq \theta\leq t}\EE((\xi_\theta)^2) 
\leq \rate_{2}(\mathbb{Y}-y_0,{\lambda}). $$
In view of~\eqref{def:A(r,v)} and~\eqref{def:I-diese(v,r,t)} we also
have
\begin{equation*}
\begin{split}
\int_s^t\int_0^v
\Big( \int_v^t f(\theta)~d\theta \Big)^2~dr~dv
&\leq C~\int_s^t e^{-\widetilde{\lambda}v} 
\int_0^v (\mathcal{A}^\flat(v,r,t))^2~dr~dv
+ C~\int_s^t e^{-\widetilde{\lambda}v} 
\int_0^v (\mathcal{A}^\sharp(v,r,t))^2~dr~dv \\
&~~~+ C~\indi{\{H<\frac{1}{2}\}}~|H-\tfrac{1}{2}|^2
\int_s^t e^{-\widetilde{\lambda}v} 
\int_0^v (\mathcal{I}^\sharp(v,r,t))^2~dr~dv.
\end{split}
\end{equation*}

We now use~\eqref{ineq:L2-maj-Adiese(r,v)-H-grand}, 
\eqref{ineq:L2-maj-Adiese(r,v)-H-petit}, 
\eqref{ineq:L2-maj-mathcal-Abemol(r,v)-H-grand},
\eqref{ineq:L2-maj-mathcal-Abemol(r,v)-H-petit}
and~\eqref{ineq:L2-maj-Idiese(v,r,t)}. As we are in the case $0<t-s<1$
the right-hand side of the preceding inequality is bounded from above by
$$ C_H~|H-\tfrac{1}{2}|^2~(t-s)^{4H\wedge(2H+1)}
~(1+t^{2H+2})~e^{-\widetilde{\lambda}s}. $$

We thus have obtained that
\begin{equation} \label{ineq:norme-L2-DrJ3-partie-2}
\int_0^N\int_0^v \EE(|\mathbf{D}_r J^{(3)}_2|^2)~dr~dv
\leq C_H~|H-\tfrac{1}{2}|^2~(t-s)^{4H\wedge(2H+1)}~(1+\lambda)^2
~\rate_{2}(\mathbb{Y}-y_0,{\lambda})
~(1+t^4)~e^{-\widetilde{\lambda}s}.
\end{equation}

\vspace{0.5cm}
\fbox{\textbf{A bound from above for $\int_0^N\int_0^N 
\EE(|\mathbf{D}_r J^{(4)}|^2)~dr~dv$.}}

In view of~\eqref{ineq:DrW'YH} we have
\begin{equation*}
\begin{split}
\big|\mathbf{D}_r J^{(4)}\big| &= \indi{(0,s)}(v)
~\Big| \int_s^t \partial_\theta K_H(\theta,v)
~\mathbf{D}_r(\wl'(Y_\theta^H))~e^{-\lambda \theta}~d\theta \Big| \\
&\leq C~\indi{(0,s)}(v)~(1+\lambda) \\
&~~~~~~~~~~ \Big|\int_s^t \indi{r\leq\theta}
~\partial_\theta K_H(\theta,v)
~e^{-\widetilde{\lambda}\theta
-|\mathbb{Y}-Y^H_\theta| \mathcal{R}(\lambda)}
~(K_H(\theta,r) + (\theta-r)^{H+\frac{1}{2}}~\indi{\{H<\frac{1}{2}\}})
~d\theta \Big|.
\end{split}
\end{equation*}
Apply~\eqref{integrale-moment} with
$$ \xi_\theta\equiv e^{-\frac{1}{2}\widetilde{\lambda}\theta}
~e^{-|\mathbb{Y}-Y_\theta^H|~\mathcal{R}(\lambda)} $$
and
$$ f(\theta) \equiv \indi{r\leq\theta}~\left(K_H(\theta,r) 
+ (\theta-r)^{H+\frac{1}{2}}~\indi{\{H<\frac{1}{2}\}} \right)
~\partial_\theta K_H(\theta,v)
~e^{-\frac{1}{2}\widetilde{\lambda}\theta}. $$
It comes:
$$ \EE(|\mathbf{D}_r J^{(4)}|^2)
\leq C~\indi{(0,s)}(v)
~(1+\lambda)^2~\rate_2(\mathbb{Y}-y_0,{\lambda})
~\left(\int_s^t |f(\theta)|~d\theta\right)^2
~e^{-\widetilde{\lambda}s}. $$
In view of~\eqref{def:A(r,v)} and~\eqref{def:I-natural(v,r,t)} 
we have
\begin{equation*}
\begin{split}
\int_0^s\int_0^t
\Big( \int_s^t |f(\theta)|~d\theta \Big)^2~dr~dv
&\leq C~e^{-\widetilde{\lambda}s} \int_0^s 
\int_0^t (\mathcal{A}^\natural(v,r,t))^2~dr~dv \\
&~~~+ C~e^{-\widetilde{\lambda}s} 
~\indi{\{H<\frac{1}{2}\}}~|H-\tfrac{1}{2}|^2
\int_0^s \int_0^t (\mathcal{I}^\natural(v,r,t))^2~dr~dv.
\end{split}
\end{equation*}
We now use~\eqref{ineq:L2-maj-mathcal-Abecarre(r,v)}
and~\eqref{ineq:L2-maj-Inatural(v,r,t)} and get
\begin{equation} \label{ineq:norme-L2-DrJ4}
\int_0^N \int_0^N \EE(|\mathbf{D}_r J^{(4)}|^2)~dr~dv 
\leq C~|H-\tfrac{1}{2}|^2~(t-s)^{2H}~
~(1+\lambda)^2~\rate_2(\mathbb{Y}-y_0,{\lambda})
(1+t^{2H+1})~e^{-\widetilde{\lambda}s}.
\end{equation}

\vspace{0.7cm}

To conclude the proof of~\eqref{ineq:Upsilon_t-Upsilon_s}, it remains 
to gather
\eqref{ineq:norme-L2-DrJ1}, \eqref{ineq:norme-L2-DrJ2},
\eqref{ineq:norme-L2-DrJ3-partie-1-1},
\eqref{ineq:norme-L2-DrJ3-partie-1},
\eqref{ineq:norme-L2-DrJ3-partie-2},
\eqref{ineq:norme-L2-DrJ4}.
\end{proof}

\subsection{The key lemma to estimate~$\mathbf{D}_r J^{(1)}$}\label{subsec:keyLemma}

\begin{lemma} \label{lem:gapKH}
For any $0<s<t < N$ with $0<t-s<1$ and $H\in(\frac{1}{4},1)$ it holds 
that
\begin{equation} \label{ineq:gapKH}
\int_s^t v^{2H}~(K_H(t,v) - 1)^2~dv \leq C~(H-\tfrac{1}{2})^2
~(t-s)^{(2H)\wedge 1} (1+ (\log(t-s))^2)~(1+t^2).
\end{equation}
\end{lemma}

\begin{proof}
Notice that $\chi_H$ is a bounded function of $H\in(\frac{1}{4},1)$.
We therefore have
$$ (K_H(t,v) - 1)^2 \leq C~\left( 
\left(\frac{t(t-v)}{v}\right)^{H-\frac{1}{2}}-1\right)^2
+ C~(H-\tfrac{1}{2})^2~v^{1-2H} \Big(\int_v^t
\theta^{H-\frac{3}{2}} (\theta-v)^{H-\frac{1}{2}}~d\theta \Big)^2. $$
We thus are led to consider
$$ R_1 := \int_s^t v~\big( t^{H-\frac{1}{2}}~(t-v)^{H-\frac{1}{2}} 
- v^{H-\frac{1}{2}}\Big)^2~dv $$
and
$$ R_2 := \Big(H-\tfrac{1}{2}\Big)^2\int_s^t v\Big(\int_v^t 
\theta^{H-\frac{3}{2}} (\theta-v)^{H-\frac{1}{2}}~d\theta \Big)^2~dv. $$

As for $R_1$ we have
\begin{equation*}
\begin{split}
R_1 &\leq 2t \int_s^t\Big( t^{H-\frac{1}{2}}~(t-v)^{H-\frac{1}{2}} 
- 1\Big)^2~dv + 2~\int_s^t v~(1-v^{H-\frac{1}{2}})^2~dv \\
&\leq 2~\Big( t^{2H}~\tfrac{1}{2H}~(t-s)^{2H} 
- 2t^{H+\frac{1}{2}}~\tfrac{1}{H+\tfrac{1}{2}}~(t-s)^{H+\frac{1}{2}}
+ t(t-s) \Big) \\
&~~~+ 2~\Big(\tfrac{1}{2}~(t^2-s^2) 
+ \tfrac{1}{2H+1}~(t^{2H+1}-s^{2H+1})
- \tfrac{2}{H+\tfrac{3}{2}}~(t^{H+\frac{3}{2}} - s^{H+\frac{3}{2}})
\Big).
\end{split}
\end{equation*}
We aim to use Taylor expansions of functions of 
$H-\frac{1}{2}$ around $H-\frac{1}{2}=0$. This leads us
to consider the following maps $\Psi_1(z)$ and $\Psi_2(z)$ for 
$z\in(-\frac{1}{4},\frac{1}{2})$: 
\begin{align*}
\Psi_1(z) := \tfrac{1}{1+2z}~(t(t-s))^{1+2z} 
- \tfrac{2}{1+z}~(t(t-s))^{1+z}+t(t-s), \\
\Psi_2(z) := \tfrac{1}{2}~(t^2-s^2) 
+ \tfrac{1}{2+2z}~(t^{2+2z}-s^{2+2z})
- \tfrac{1}{1+\tfrac{z}{2}}~(t^{2+z} - s^{2+z}).
\end{align*}
Observe that $\Psi_i(0)=\Psi'_i(0)=0$ for $i=1,2$. In addition, an easy 
calculation 
shows that $\Psi_1''(z)$ is a sum of terms of the type
$$ \frac{1}{(1+2z)^i}( (\log(t(t-s)))^j (t(t-s))^{1+2z}
~~~\text{or}~~~
\frac{1}{(1+z)^i}( (\log(t(t-s)))^j (t(t-s)^{1+z}) $$
with $i\in\{1,2,3\}$ and $j\in\{0,1,2\}$. 
Consequently, as $0<t-s<1$,
$$ \text{for}~H<\tfrac{1}{2},
~~\sup_{z\in(H-\frac{1}{2},0)}|\Psi_1''(z)| 
\leq C~(1+(\log(t-s))^2)~(t-s)^{2H}~(1 + t^2) $$
and
$$ \text{for}~H>\tfrac{1}{2},
~~\sup_{z\in(0,H-\frac{1}{2})}|\Psi_1''(z)| 
\leq C~(1+(\log(t-s))^2)~(t-s)~(1 + t^2). $$
Similarly,
$\Psi_2''(z)$ is a sum of terms of the type
$$ \frac{1}{(1+z)^i}( (\log(t))^j t^{2+2z} - (\log(s))^j s^{2+2z} )
~~~\text{or}~~~
\frac{1}{(1+\tfrac{z}{2})^i}( (\log(t))^j t^{2+z} - (\log(s))^j 
s^{2+z}) $$
with $i\in\{1,2,3\}$ and $j\in\{0,1,2\}$. Consequently, 
$$ \sup_{z\in(-\frac{1}{4},\frac{1}{2})}|\Psi_2''(z)| 
\leq C~(t-s)~(1 + t^2). $$
It therefore results from Taylor expansions of $\Psi_i$ that
\begin{equation} \label{ineq:R2}
\boxed{R_1 \leq C~(\Psi_1(H-\tfrac{1}{2}) + \Psi_2(H-\tfrac{1}{2}))\leq 
 C~(H-\tfrac{1}{2})^2(1+(\log(t-s))^2)~(t-s)^{(2H)\wedge 1}~(1 + t^2).}
\end{equation}

As for $R_2$ we observe that
$$ \theta^{H-\frac{3}{2}} \leq \frac{v^{H-1}}{\sqrt{\theta}}
\leq \frac{v^{H-1}}{\sqrt{\theta-v}}, $$
from which
$$ R_2 \leq C~(H-\tfrac{1}{2})^2 \int_s^t v^{2H-1}
~\Big(\int_v^t (\theta-v)^{H-1}~d\theta
\Big)^2~dv. $$
We thus get
\begin{equation} \label{ineq:R4}
\boxed{ R_2 \leq C~(H-\tfrac{1}{2})^2~(t-s)^{2H}~t^{2H}.}
\end{equation}

\end{proof}

\subsection{$L^p$-estimate on $(\Upsilon^{(N)}_t-\Upsilon^{(N)}_s)$}
\label{subsec:LpEstimates}

In this section we prove $L^p$-estimates on 
$\delta^{(N)}_H(U^{(N)}_t(\cdot)-U^{(N)}_s(\cdot))$. 
In the calculations below it will suffice to use the 
following estimate which results from~\eqref{ineq:DrW'YH}:
\begin{equation} \label{ineq:Dr-W-seconde}
\forall 0 \leq r,~~|\mathbf{D}_r(\wl'(Y^H_v))| \leq 
C~\indi{r\leq v}~(1+\lambda)~\big( |K_H(v,r)| 
+ (v-r)^{H+\frac{1}{2}} \big)~e^{|b'|_\infty v}.
\end{equation}

\begin{lemma}\label{lem:EstDeriv}
Suppose $\frac{1}{4}<H<1$ and $H\neq\frac{1}{2}$. 

Let $\Upsilon^{(N)}$ be defined as in~\eqref{def:Upsilon-Nt}.
For any $p\geq 2$ and $0<s<t<N$ with $0<t-s<1$ we have
\begin{equation} \label{ineq:Upsilon_t-Upsilon_s-Lp}
\big\|\Upsilon^{(N)}_t-\Upsilon^{(N)}_s\big\|_p
\leq C_H~|H-\tfrac{1}{2}|~(t-s)^{H\wedge\frac{1}{2}}
(1+\lambda)^2
~(1+ |\log(t-s)|)~(1+t^2)~e^{-\widetilde{\lambda}s}.
\end{equation}
\end{lemma}

\begin{proof}
We again consider~\eqref{eq:expansion-Ut-Us}. 
In view of Meyer's inequalities (\cite[Prop.3.2.1]{Nualart}) we have
$$ \big\|\Upsilon^{(N)}_t-\Upsilon^{(N)}_s\big\|_p
\leq C~\sum_{i=1}^4 \Big\{\int_0^N \EE( |J^{(i)}|^2)~dv 
\Big\}^{\frac{1}{2}}
+ C~\sum_{i=1}^4 \Big\{ \EE \Big( \Big( \int_0^N \int_0^N 
\big |\mathbf{D}_r 
J^{(i)} \big|^2~dr~dv\Big)^{\frac{p}{2}} \Big) \Big\}^{\frac{1}{p}}. $$
As in the proof of Lemma~\ref{L2-estimates} we limit ourselves to treat 
the second term. We start with applying Minkowski's inequality~\eqref{ineq:Minkowski} to get 
for $p\geq 2$
$$ \Big\{ \EE \Big( \Big( \int_0^N \int_0^N 
\big |\mathbf{D}_r 
J^{(i)} \big|^2~dr~dv\Big)^{\frac{p}{2}} \Big) \Big\}^{\frac{1}{p}}
\leq \Big(\int_0^N \int_0^N \Big\{ \EE \big(\big 
|\mathbf{D}_r 
J^{(i)} \big|^p \big) \Big\}^{\frac{2}{p}}~dr~dv 
\Big)^{\frac{1}{2}}. $$

In most of the calculations below we use~\eqref{ineq:Dr-W-seconde} and 
exhibit a \textsl{deterministic}
upper bound~$\mathcal{D}_r^{(i)}$ for~$|\mathbf{D}_r J^{(i)} \big|$.
We are thus reduced to use the $L^2$-estimates obtained in 
Section~\ref{subsec:lemma-L2-estimates} to get suitable upper bounds 
for
$$ \int_0^N \int_0^N |\mathcal{D}_r^{(i)}|^2~dr~dv~~~(1\leq i\leq 4). $$

\vspace{0.3cm}
\fbox{\textbf{A bound from above for 
$\int_0^N \int_0^N \Big\{ \EE \big(\big |\mathbf{D}_r 
J^{(1)} \big|^p \big) \Big\}^{\frac{2}{p}}~dr~dv$}}

\vspace{0.1cm}
Recall that
$$ \mathbf{D}_r J^{(1)} = \indi{(s,t]}(v)~\indi{r\leq v} 
~\mathbf{D}_r(\wl'(Y^H_v))~(K_H(t,v) - 1)~e^{-\lambda v} $$
and, by hypothesis, $\widetilde{\lambda}>0$. By 
using~\eqref{ineq:Dr-W-seconde} we thus are in a position to choose
$$ \mathcal{D}_r^{(1)} := C~\indi{(s,t]}(v)~\indi{r\leq v}
~(1+\lambda)~\big( |K_H(v,r)| 
+ (v-r)^{H+\frac{1}{2}} \big)~(K_H(t,v) - 1)
~e^{-\widetilde{\lambda} s}. $$

As in the proof of~\eqref{ineq:norme-L2-DrJ1} we 
use~\eqref{eq:norme-L2-K_H} and get
$$ \int_0^N\int_0^N \Big\{\EE(|\mathbf{D}_r 
J^{(1)}|^p)\Big\}^{\frac{2}{p}}~dr~dv 
\leq C~(1+\lambda)^2\, e^{-2\widetilde{\lambda}s} 
\int_s^t (v^{2H} + v^{2H+2})~(K_H(t,v) - 1)^2~dv. $$
In view of~\eqref{ineq:gapKH} we deduce that
\begin{equation}\label{ineq:norme-Lp-DrJ1} 
\int_0^N\int_0^N \Big\{\EE(|\mathbf{D}_r 
J^{(1)}|^p)\Big\}^{\frac{2}{p}}~dr~dv
\leq C~(H-\tfrac{1}{2})^2~(t-s)^{(2H)\wedge 1}~(1+ (\log(t-s))^2)
~(1+\lambda)^2~(1+t^4)\, e^{-2\widetilde{\lambda}s}.
\end{equation}

\vspace{0.3cm}
\fbox{\textbf{A bound from above for 
$\int_0^N \int_0^N \Big\{ \EE \big(\big |\mathbf{D}_r 
J^{(2)} \big|^p \big) \Big\}^{\frac{2}{p}}~dr~dv$}}

\vspace{0.1cm}
We have
$$ \mathbf{D}_r J^{(2)} = \indi{(s,t]}(v)~\indi{r\leq v}~
\mathbf{D}_r(\wl'(Y_v^H)) \int_v^t \partial_\theta K_H(\theta,v)
~(e^{-\widetilde{\lambda} \theta} 
- e^{-\widetilde{\lambda}  v})~d\theta. $$
By using~\eqref{ineq:Dr-W-seconde} and the hypothesis 
$\widetilde{\lambda}>0$ we are in a position to choose
$$ \mathcal{D}_r^{(2)} := C~\indi{r\leq v}~(1+\lambda)
~\big( |K_H(v,r)| + (v-r)^{H+\frac{1}{2}} \big)
\int_v^t \partial_\theta K_H(\theta,v)
~(e^{-\widetilde{\lambda} \theta} 
- e^{-\widetilde{\lambda}  v})~d\theta. $$
Now proceed as in the proof of~\eqref{ineq:norme-L2-DrJ2}. It comes:
\begin{equation*} 
\begin{split}
\int_0^N \Big\{ \EE \big(\big |\mathbf{D}_r 
J^{(2)} \big|^p \big) \Big\}^{\frac{2}{p}}~dr 
&\leq C~\indi{(s,t]}(v)
~|H-\tfrac{1}{2}|^2~(t-s)^{2H+1}~(1+\lambda)^2\\
&\hspace{1.5cm}\times (v^{2H} + v^{2H+1})
~\Big(1 + \frac{t^{2H-1}}{v^{2H-1}}
~\indi{\tfrac{1}{2}<H<1}\Big) \, e^{-2\widetilde{\lambda} v}\\
&\leq C~\indi{(s,t]}(v)
~|H-\tfrac{1}{2}|^2~(t-s)^{2H+1}~(1+\lambda)^2~(1+t^{2H+2}) \, 
e^{-2\widetilde{\lambda} s}, 
\end{split}
\end{equation*}
from which
\begin{equation}\label{ineq:norme-Lp-DrJ2} 
\int_0^N \int_0^N \Big\{ \EE \big(\big |\mathbf{D}_r 
J^{(2)} \big|^p \big) \Big\}^{\frac{2}{p}}~dr~dv
\leq C~|H-\tfrac{1}{2}|^2~(t-s)^{2H+2}~(1+\lambda)^2~(1+t^4)\, 
e^{-2\widetilde{\lambda} s}.
\end{equation}

\vspace{0.3cm}
\fbox{\textbf{A bound from above for 
$\int_0^N \int_v^N \Big\{ \EE \big(\big |\mathbf{D}_r 
J^{(3)} \big|^p \big) \Big\}^{\frac{2}{p}}~dr~dv$}}

\vspace{0.1cm}

We proceed as above. From
$$ |\indi{v\leq r}~\mathbf{D}_r J^{(3)}| = \indi{(s,t]}(v)~\indi{v\leq 
r} 
~\Big| \int_r^t \mathbf{D}_r \wl'(Y^H_\theta)
~\partial_\theta K_H(\theta,v)
~e^{-\widetilde{\lambda} \theta}~d\theta \Big| $$
we deduce that we can choose
$$\mathcal{D}_r^{(3)} := C~\indi{(s,t]}(v)~\indi{v\leq r}~(1+\lambda)
\, e^{-\widetilde{\lambda}  s} \int_r^t (K_H(\theta,r) 
+ (\theta-r)^{H+\frac{1}{2}}~\indi{\{H<\frac{1}{2}\}})
~\Big| \partial_\theta K_H(\theta,v) \Big|~d\theta. $$
Use~\eqref{ineq:f(theta)-L2-J3}. It comes:
$$ \int_0^N\int_v^N \Big\{ \EE \big(\big |\mathbf{D}_r 
J^{(3)} \big|^p \big) \Big\}^{\frac{2}{p}}~dr~dv
\leq C_H~|H-\tfrac{1}{2}|^2~(t-s)~(1+\lambda)^2~(1+t^3)\, 
e^{-2\widetilde{\lambda} s}. $$

\vspace{0.3cm}
\fbox{\textbf{A bound from above for 
$\int_0^N \int_0^v \Big\{ \EE \big(\big |\mathbf{D}_r 
J^{(3)} \big|^p \big) \Big\}^{\frac{2}{p}}~dr~dv$}}

\vspace{0.1cm}

As in the proof of~\eqref{ineq:norme-L2-DrJ3-partie-1},
for $r\leq v$ we consider
$$ \mathbf{D}_r J^{(3)}_1 := \indi{(s,t]}(v)
\int_v^t \partial_\theta K_H(\theta,v)
~(\wl''(Y_\theta^H)-\wl''(Y_v^H))~\mathbf{D}_r Y_\theta^H 
~e^{-\lambda\theta}~d\theta $$
and
$$ \mathbf{D}_r J^{(3)}_2 := \indi{(s,t]}(v)
\int_v^t \partial_\theta K_H(\theta,v)
~\wl''(Y_v^H)~(\mathbf{D}_r Y_\theta^H - \mathbf{D}_r Y_v^H)
~e^{-\lambda\theta}~d\theta. $$

We have
$$|\mathbf{D}_r J^{(3)}_1| \leq C~\indi{(s,t]}(v)~(1+\lambda)^2 
~e^{-\widetilde{\lambda} s}
\int_v^t \left|\partial_\theta K_H(\theta,v)\right|
~|Y_\theta^H - Y_v^H| 
\left\{ K_H(\theta,r) + (\theta-r)^{H+\frac{1}{2}} 
~\indi{\{H<\frac{1}{2}\}} \right\}~d\theta. $$
From Minskowski's inequality it results that
\begin{equation*}
\begin{split}
&\Big\{ \EE (|\mathbf{D}_r J^{(3)}_1|^p) \Big\}^{\frac{1}{p}} \\
&\leq C~(1+\lambda)^2~e^{-\widetilde{\lambda} s} \int_v^t  
\Big\{ \EE \big(|Y_\theta^H - Y_v^H|^p\big) \Big\}^{\frac{1}{p}}
~\Big|\partial_\theta K_H(\theta,v)\Big|
\left\{ K_H(\theta,r) + (\theta-r)^{H+\frac{1}{2}} 
~\indi{\{H<\frac{1}{2}\}} \right\}~d\theta \\
&\leq C~(1+\lambda)^2~e^{-\widetilde{\lambda} s}
\int_v^t  (\theta-v)^{H}~\Big|\partial_\theta K_H(\theta,v)\Big|
\left\{ K_H(\theta,r) + (\theta-r)^{H+\frac{1}{2}} 
~\indi{\{H<\frac{1}{2}\}} \right\}~d\theta.
\end{split}
\end{equation*}
We now use our estimate on the weighted $L^2$-norm of the function 
$\phi(\theta,v)$ chosen in the proof 
of~\eqref{ineq:norme-L2-DrJ3-partie-1} to get
\begin{equation}\label{ineq:norme-Lp-DrJ3-partie-1} 
\int_0^N \int_0^v \Big\{ \EE \big(\big |\mathbf{D}_r 
J^{(3)}_1 \big|^p \big) \Big\}^{\frac{2}{p}}~dr~dv
\leq C_H~|H-\tfrac{1}{2}|^2~(t-s)^{4H\wedge(2H+1)}~(1+\lambda)^4
~(1+t^4)~e^{-2\widetilde{\lambda} s}.
\end{equation}

\vspace{0.5cm}

Similarly, for $r\leq v$ we have 
\begin{equation*}
\begin{split}
|\mathbf{D}_r J^{(3)}_2|
&\leq C~\indi{(s,t]}(v)~(1+\lambda)\, e^{-\widetilde{\lambda} s} 
\int_v^t \left|\partial_\theta K_H(\theta,v)\right| \\
&~~~~~~~~~~~~~~~~~~~~~~~~~~~~~~
\left\{ |K_H(\theta,r) - K_H(v,r)|
+ (\theta-v) \left(K_H(v,r)+(v-r)^{H+\frac{1}{2}} 
~\indi{\{H<\frac{1}{2}\}}\right)\right\}~d\theta.
\end{split}
\end{equation*}
We now use our estimate on the weighted $L^2$-norm of the function 
$f(\theta)$ chosen in the proof 
of~\eqref{ineq:norme-L2-DrJ3-partie-2} to get
\begin{equation}\label{ineq:norme-Lp-DrJ3-partie-2} 
\int_0^N \int_0^v \Big\{ \EE \big(\big |\mathbf{D}_r 
J^{(3)}_2 \big|^p \big) \Big\}^{\frac{2}{p}}~dr~dv
\leq C_H~|H-\tfrac{1}{2}|^2~(t-s)^{4H\wedge(2H+1)}
~(1+\lambda)^2~(1+t^4)~e^{-2\widetilde{\lambda}  s}.
\end{equation}

\vspace{0.3cm}

\fbox{\textbf{A bound from above for $\int_0^N \int_0^N \Big\{ \EE 
\big(\big |\mathbf{D}_r 
J^{(4)} \big|^p \big) \Big\}^{\frac{2}{p}}~dr~dv$ }}

\vspace{0.1cm}

We obviously can choose
$$ \mathcal{D}_r^{(4)} 
:= C~\indi{(0,s)}(v)~(1+\lambda)\, e^{-\widetilde{\lambda}  s}\Big|
\int_s^t \indi{r\leq\theta}~\partial_\theta K_H(\theta,v)
~(K_H(\theta,r) + (\theta-r)^{H+\frac{1}{2}}~\indi{\{H<\frac{1}{2}\}})
~d\theta \Big|. $$
Proceed as in the proof of~\eqref{ineq:norme-L2-DrJ4} to obtain

\begin{equation} \label{ineq:norme-Lp-DrJ4}
\int_0^N \int_0^N \Big\{ \EE \big(\big |\mathbf{D}_r 
J^{(4)} \big|^p \big) \Big\}^{\frac{2}{p}}~dr~dv
\leq C~|H-\tfrac{1}{2}|^2~(t-s)^{2H}~(1+\lambda)^2
~(1+t^{2H+1})~e^{-2\widetilde{\lambda} s}.
\end{equation}

\vspace{0.7cm}

To conclude the proof of~\eqref{ineq:Upsilon_t-Upsilon_s-Lp}, it 
remains to gather
\eqref{ineq:norme-Lp-DrJ1}, \eqref{ineq:norme-Lp-DrJ2},
\eqref{ineq:norme-Lp-DrJ3-partie-1},
\eqref{ineq:norme-Lp-DrJ3-partie-2},
\eqref{ineq:norme-Lp-DrJ4}.
\end{proof}

\section{Application to weak convergence rates when $H\to \frac{1}{2}$}
\label{sec:examples}

In this section, we apply Theorem~\ref{prop:majgap_0} to estimate the weak convergence rate of $\tau^H_X$ towards $\tau_{\mathbf{X}}$ when~$H$ tends to $\frac{1}{2}$. 

In the subsection~\ref{subsec:WCR} we explain how Theorem~\ref{prop:majgap_0} can be used to prove the vague, respectively the weak convergence, of~$\tau_{X}^H$.  We introduce
the Aletti metric which allows one to quantify the weak convergence in terms of $|H-\frac{1}{2}|$.
Then, in the subsection~\ref{subsec:hypo-DR} we consider the Lamperti transform $Y^H$ of $X^H$. We exhibit sufficient conditions on
the Malliavin derivatives $D^H_{\cdot} Y^H_{t}$ and on the tail probability of $\tau^H_Y$ which allow one to get a weak convergence rate of $\tau^H_X$ towards $\tau_{\mathbf{X}}$ of order~$|H-\frac{1}{2}|$.  Finally, in the subsection~\ref{subsec:cond-drift}
we provide sufficient conditions on the drift coefficient which imply the conditions exhibited 
in the subsection~\ref{subsec:hypo-DR}.

\subsection{On the vague convergence and on the weak convergence rate of
$\tau^H_X$} \label{subsec:WCR}

Let $(\mu_{n})_{n\in \N}$ and $\mu$ be locally finite measures on $\R^d$.  The
sequence~$\mu_n$ is said to converge vaguely to~$\mu$ if $\int f(x) \mu_n(dx)$
converges to $\int f(x) \mu(dx)$ for any continuous and bounded function~$f$
vanishing at infinity (see e.g. \cite[p.66]{BW}).

If for some $\lambda^\sharp\geq 0$ the Laplace transform $\mathcal{L}_{\mu_{n}}$ of
$\mu_{n}$ converges pointwise to some function $\mathcal{L}$ on the interval
$(\lambda^\sharp,+\infty)$, then $(\mu_{n})_{n\in \N}$  converges
vaguely~\cite[Thm.8.5(a)]{BW}. 

Even if the $\mu_{n}$'s are probability measures,  the vague convergence is not equivalent to the weak convergence since the limit
measure~$\mu$ may be \emph{defective} in the sense that its mass is strictly less
than~$1$. However, if $\mu(\R^d)=1$, then the vague convergence to $\mu$ of probability measures $\mu_{n}$ is equivalent to  their weak convergence
(see e.g.~\cite[Lem 5.20]{Kallenberg}).

Theorem~\ref{prop:majgap_0} implies the vague convergence of $\tau_{X}^H$ to
$\tau_{\mathbf{X}}$ as $H\to\frac{1}{2}$ since  it shows that $\EE e^{-\lambda
\tau_{X}^H}$ tends to $\EE e^{-\lambda \tau_{\mathbf{X}}}$ for any
$\lambda>|\widetilde{b}'|_{\infty}$.
Notice that one cannot expect the weak convergence for any drift. For instance,
the process $X^H_{t} = -t-A+ B^H_{t}$ satisfies, for some $A>0$ large enough to be precised below,
$$\PP(\inf\{t\geq 0:~ X^H_{t}=1\} = \infty)>0,~\forall H<1. $$
Indeed, 
\begin{align*}
\PP(\inf\{t\geq 0:~ X^H_{t}=1\} = \infty) &= \PP( \forall t\geq0,~ B_{t}<1+A+t) \\
&\geq \PP \left( \forall T\geq 0, ~\frac{\sup_{t\leq T} |B_{t}|}{1+ T^H|\log T|^2} < \frac{1+A+ T}{1+ T^H|\log T|^2} \right).
\end{align*}
We know e.g. from \cite[Prop.3.1]{Kubilius} that there exists an a.s. finite random variable $\xi(H)$ such that a.s., $\forall T\geq 0, ~\frac{\sup_{t\leq T} |B_{t}|}{1+ T^H|\log T|^2} \leq \xi(H)$. Hence,
$$\PP(\inf\{t\geq 0:~ X^H_{t}=1\} = \infty) \geq \PP(\forall T\geq 0,~ \xi(H)< \frac{1+A+T}{1+ T^H|\log T|^2} ) \geq \PP( \xi(H)< A) ,$$
where the last probability is positive for $A$ large enough.
Therefore, in this example the law of $\tau_{X}^H$ is a subprobability measure,
including in the limit case $H=\frac{1}{2}$. 

As mentioned above, to get the weak convergence of $\tau_{X}^H$ to 
$\tau_{\mathbf{X}}$ as $H\to\frac{1}{2}$ one needs that the probability distributions of 
$\tau_X^H$ and $\tau_{\mathbf{X}}$ are non defective.
 In that case, the Laplace transforms of $\tau_{X}^H$  converge on $[0,+\infty)$.

When the preceding conditions are satisfied, a weak convergence rate of
$\tau^H_X$ to~$\tau_{\mathbf{X}}$ can be obtained by using a  distance
introduced by \citet{Aletti}. For nonnegative random variables $Z_{1}$ and
$Z_{2}$, Aletti's distance between their probability distribution is defined as
\begin{align*}
d_{A}(Z_{1}, Z_{2}) := \inf\left\{\varepsilon>0:~ \sup_{\lambda\in \R_{+}} \max\left( \EE 
e^{-\lambda Z_{1}} - \left(\varepsilon + \EE e^{-\lambda Z_{2}}\right) 
e^{\lambda\varepsilon} ,
\EE e^{-\lambda Z_{2}} - \left(\varepsilon + \EE e^{-\lambda Z_{1}}\right)  e^{\lambda\varepsilon} \right) \leq 0 \right\} .
\end{align*}
This distance metrises the weak convergence topology on the space of probability 
measures supported on $\R_{+}$: See~\cite[Thm. 2]{Aletti}.

In the estimate \eqref{ineq:Laplace-transform-main-theorem} the 
$\frac{1}{1\wedge\widetilde{\lambda}^3}$ term is natural since
without additional hypotheses the laws of $\tau^H_X$ and~$\tau_{\mathbf{X}}$ may be defective and, therefore, their Laplace transforms may be discontinuous at~0. In the following proposition the conclusion of Theorem~\ref{prop:majgap_0} is reinforced in order to be consistent with the non defectiveness of the laws of $\tau^H_X$ and~$\tau_{\mathbf{X}}$ and to allow
one to quantify the weak convergence by means of Aletti's distance.

\begin{proposition} \label{prop:estim-aletti}
Let $b$ and $\sigma$ be as in Theorem~\ref{prop:majgap_0}.
Suppose that  for any $H\in(\tfrac{1}{4},1)$ and any $\lambda\geq0$ one has
\begin{equation}\label{eq:boundwithoutlambda}
\Big|\EE\left(e^{-\lambda \tau^H_X}\right) 
- \EE\left(e^{-\lambda \tau_{\mathbf{X}}}\right)\Big| \\
\leq C_H\, |H-\tfrac{1}{2}| \, \Phi(H,\lambda), 
\end{equation}
where the function $\Phi$ satisfies $\sup_{H\in (1/4,1)} \sup_{\lambda\in \R_{+}} \Phi(H,\lambda) <\infty $ and, as above, $C_H$ denotes a constant which depends on $H$ only and is locally bounded in the open interval~$(\frac{1}{4},1)$.

Suppose in addition that the probability laws of $\tau_{X}^H$ and 
$\tau_{\mathbf{X}}$ are non defective.  
Then, $\tau_{X}^H$ weakly converges to $\tau_{\mathbf{X}}$ as $H$ tends to $\frac{1}{2}$ and
\begin{align*}
d_{A}(\tau_{X}^H, \tau_{\mathbf{X}})  \leq C_{H}\, |H-\tfrac{1}{2}|, \quad \forall H\in(\tfrac{1}{4},1).
\end{align*}
\end{proposition} 

\begin{proof}
Set $\delta_{H} := C_{H}\, |H-\tfrac{1}{2}|$
and $\varepsilon  := \delta_{H}~\sup_{H\in (1/4,1)} \sup_{\lambda\in \R_{+}} \Phi(H,\lambda) <\infty$. 
For any $\lambda\geq 0$ one has
\begin{align*}
 \EE e^{-\lambda\tau_{X}^H} - \left(\varepsilon + \EE e^{-\lambda \tau_{\mathbf{X}}}\right) e^{\lambda\varepsilon} 
 &\leq  \EE e^{-\lambda\tau_{X}^H} - \varepsilon\, e^{\lambda\varepsilon} - \EE e^{-\lambda\tau_{X}^H}\, e^{\lambda\varepsilon} + \delta_{H}~\Phi(H,\lambda) 
 ~e^{\lambda\varepsilon}\\
 &\leq (1- e^{\lambda\varepsilon})\, \EE e^{-\lambda\tau_{X}^H}\\
 &\leq 0.
\end{align*}
Similarly, one proves that
$ \EE e^{-\lambda \tau_{\mathbf{X}}} - \left(\varepsilon + \EE e^{-\lambda \tau_{X}^H}\right) e^{\lambda\varepsilon} \leq 0$.
Hence, $d_{A}(\tau_{X}^H, \tau_{\mathbf{X}})\leq \varepsilon$, which is the desired result.
\end{proof}

We now exhibit a sufficient condition on $D^H_\cdot Y^H_t$ and on the tail probability of $\tau_{Y}^H$ which implies the inequality~\eqref{eq:boundwithoutlambda}.

\subsection{Sufficient conditions on $D^H_\cdot Y^H_t$ and the tail probability of $\tau_{Y}^H$ for~\eqref{eq:boundwithoutlambda}} 
\label{subsec:hypo-DR}

The aim of this subsection is to obtain an improvement of the estimate in Theorem~\ref{prop:majgap_0}
under suitable conditions on the tail of the probability distribution of~$\tau^H_Y $ and on the process $\varpi_{H}(r,t)$ defined in~\eqref{eq:varpiH}  in terms of $D^H_\cdot Y^H_t$. We set
\begin{equation} \label{def:esp-varpi_H}
\Pi_{H}(t) : = \sqrt{\EE |\varpi_H(0,t+1)|^2}.
\end{equation}

\begin{theorem} \label{prop:majgap_0_bis}
In addition to the hypotheses made in Theorem~\ref{prop:majgap_0}, assume 
\begin{equation} \label{cond:tail-tau_H_Y}
\exists \vartheta>2,~~\forall H\in(\tfrac{1}{4},1),~~ \PP(\tau^H_Y \geq t) \leq \frac{C_H}{(1+t)^{2\vartheta}},\quad \forall t\geq 0,
\end{equation}
and
\begin{equation} \label{cond:Pi_H}
\forall H\in(\tfrac{1}{4},1),~~ \int_{\R_{+}} \sqrt{\PP(\tau^H_Y \geq t)}~\Pi_H(t)~(1+t^2)~dt \leq C_{H}. 
 \end{equation}

For any $p\geq1$ and $\lambda>0$ set
\begin{equation*}
\ratet_p(\mathbb{Y}-y_0,\lambda) := \sup_{s\in \R_+} 
\left( e^{- \lambda p s}
~\EE~e^{-|\mathbb{Y}-Y^H_s|p\mathcal{R}(\lambda)} \right),
\end{equation*} 
where
\begin{equation*}
\mathcal{R}(\lambda):=\sqrt{2\lambda +\mu^2} - \mu
~~\text{with}~~\mu:=|\widetilde{b}|_\infty.
\end{equation*}

For any $\forall H\in(\tfrac{1}{4},1)$ and $\lambda>0$ we then have  
\begin{multline} \label{ineq:Laplace-transform-main-theorem-hypo-renforcees}
\Big|\EE\left(e^{-\lambda \tau^H_X}\right) 
- \EE\left(e^{-\lambda \tau_{\mathbf{X}}}\right)\Big| \\
\leq C_H~|H-\tfrac{1}{2}|
~(1+\lambda)^2
~\Big( \ratet_{1}(\mathbb{Y}-y_0,{\lambda}) + 
\left(\ratet_{2}(\mathbb{Y}-y_0,{\lambda}) \right)
^{\frac{H\wedge\frac{1}{2}}{6}}
+ \left(\ratet_{4}(\mathbb{Y}-y_0,{\lambda})\right)
^{\frac{H\wedge\frac{1}{2}}{12}} \Big). 
\end{multline}
\end{theorem}

\begin{proof}
The proof of~\eqref{ineq:Laplace-transform-main-theorem-hypo-renforcees} consists in suitably modifying the proof of Theorem~\ref{prop:majgap_0}. We start with the decomposition stated in Proposition~\ref{prop:gaps}:
\begin{align*} %
\EE\Big(e^{-\lambda \tau^H_Y}\Big) - \EE\Big(e^{-\lambda 
\tau_{\mathbf{Y}}}\Big) = I_1(\lambda) + I_2(\lambda).
\end{align*}
Propositions~\ref{prop:I1'} and \ref{prop:I2'} below provide desired estimates
on each one of these two terms.
\end{proof}

\paragraph{Bound on $I_{1}$.}
The proposition~\ref{prop:I1'} can be improved as follows.

\begin{proposition}\label{prop:I1'-sec6}
Under the hypotheses of Theorem~\ref{prop:majgap_0_bis}  for any $\lambda>0$ 
one has
\begin{equation} \label{ineq:I_1-sec6}
|I_1(\lambda)| \leq C_H~(1+\lambda)
~|H-\tfrac{1}{2}|~\sqrt{\ratet_{2}(\mathbb{Y}-y_{0},{\lambda})}.
\end{equation}
\end{proposition}

\begin{proof}
Recall the definition~\eqref{eq:Delta}:
$$  \Delta(s,H) := Hs^{2H-1}-\tfrac{1}{2} 
+ \int_0^s \partial_s 
K_H(s,r)~\int_{0}^s\mathbf{D}_r\widetilde{b}(Y^H_{v})~dv~dr.  $$
By using Minkowski's inequality~\eqref{ineq:Minkowski} we get
$$ \EE|\Delta(s,H)|^2 \leq 2(Hs^{2H-1}-\tfrac{1}{2} )^2
+ 2 \int_0^s \partial_s 
K_H(s,r)~\sqrt{\EE\Big|\int_{0}^s\mathbf{D}_r\widetilde{b}(Y^H_{v})~dv\Big|^2}~dr.  $$
We now use~\eqref{D_rYneg} and get
\begin{align*} %
\sqrt{\EE |\Delta(s,H)| ^2} \leq \left|Hs^{2H-1}-\tfrac{1}{2}\right|
+C_H~\left|H-\tfrac{1}{2}\right| \Pi_H(s)~(1 + s^{2}).
\end{align*}
Therefore, in view of Inequality~\eqref{maj:wl-d2} one has
\begin{equation} \label{I1-majoration-sec6}
\begin{split}
|I_1(\lambda)| &= \left|\int_0^\infty e^{-\lambda s} 
~\EE\left(\Delta(s,H) \, \indi{\{\tau^H_Y \geq s\}} \wl''(Y_s^H)\right)ds\right| \\
& \leq C~(1+\lambda)
~\sqrt{\ratet_2(\mathbb{Y}-y_0,{\lambda})}
~\int_0^\infty 
\sqrt{\PP\{\tau^H_Y \geq s\}}~
e^{-\frac{1}{2}{\lambda}s}~
\big|Hs^{2H-1}-\tfrac{1}{2}\big|~ds \\
&\quad + C_H~|H-\tfrac{1}{2}|~(1+\lambda)
~\sqrt{\ratet_2(\mathbb{Y}-y_0,{\lambda})}
\int_0^\infty \sqrt{\PP\{\tau^H_Y \geq s\}}
~e^{-\frac{1}{2}\lambda s} \, \Pi_H(s) ~(1+s^{2})~ds.
\end{split}
\end{equation} 
Consider the first integral in the right-hand side of the last inequality. Split the integral
$$ \int_0^\infty \sqrt{\PP\{\tau^H_Y \geq s\}}~
e^{-\frac{1}{2}{\lambda}s}~
\big|Hs^{2H-1}-\tfrac{1}{2}\big|~ds $$
into integrals from $0$ to $\alpha := (\tfrac{1}{2H})^{\frac{1}{2H-1}}$
and from $\alpha$ to $+\infty$. This leads one to consider
\begin{align*}
I_{11} &:= 
-\int_{0}^\alpha 
 \sqrt{\PP\{\tau^H_Y \geq s\}}~
e^{-\frac{1}{2}{\lambda}s}~
\left(\tfrac{1}{2}-Hs^{2H-1}\right)~ds, \\
I_{12} &:= 
-\int_{\alpha}^{+\infty} 
~ \sqrt{\PP\{\tau^H_Y \geq s\}}~
e^{-\frac{1}{2}{\lambda}s}
\left(Hs^{2H-1}- \tfrac{1}{2}\right)~ds.
\end{align*}
As for $I_{11}$, bound the exponential from above by $1$ and use \eqref{cond:tail-tau_H_Y}. Then, integrate by parts and use that $1-\alpha^{2H-1} = 
\frac{1}{H}(H-\frac{1}{2})$. It comes: 
\begin{align*}
I_{11} \leq \frac{C\alpha}{2(1+\alpha)^{\vartheta}}
~
\frac{1}{H}(H-\tfrac{1}{2}) + C~\int_{0}^\alpha 
  \frac{1}{(1+s)^{\vartheta+1}}~(s-s^{2H})~ds.
\end{align*}
Observe that $\alpha$ is a bounded function of~$H\in(\tfrac{1}{4},1)$. 
In addition, for any~$s\in[0,\alpha]$ apply the Mean Value Theorem to 
the map $H\in(\tfrac{1}{4},1)\mapsto s-s^{2H} 
= s-s^{1+2(H-\frac{1}{2})}$ around the 
point~$H=\tfrac{1}{2}$. It comes:
\begin{equation}\label{sec6-eq:I1-2}
\begin{split}
I_{11} &\leq C~|H-\tfrac{1}{2}|
+ C |H-\tfrac{1}{2}|~\sup_{s\in [0,\alpha]} \sup_{\gamma\in 
(-\frac{1}{4},\frac{1}{2})} (|\log(s)|~s^{1+2\gamma})~ 
\int_{0}^\alpha  \frac{1}{(1+s)^{\vartheta+1}}~ds \\
&\leq C~|H-\tfrac{1}{2}|.
\end{split}
\end{equation}
As for $I_{12}$, we again use \eqref{cond:tail-tau_H_Y} and bound the exponential from above by $1$ to obtain
$$ I_{12} \leq -C~\int_{\alpha}^{+\infty} 
\frac{1}{(1+s)^{\vartheta}}\left(Hs^{2H-1}- \tfrac{1}{2}\right)~ds. $$
Then proceed as above. First, integrate by parts. Second, use that
$1-\alpha^{2H-1} = 
\frac{1}{H}(H-\frac{1}{2})$. Last, apply the Mean Value Theorem to 
$H\in(\tfrac{1}{4},1)\mapsto s-s^{2H} 
= s-s^{1+2(H-\frac{1}{2})}$ around~$H=\tfrac{1}{2}$ and use that
$$ \exists C>0,~\forall s>0,~\forall 
\gamma\in(-\tfrac{1}{4},\tfrac{1}{2}),
~~~|\log(s)|~(s^{1+2\gamma}) \leq C~(1+s^2). $$
It comes, using that $\vartheta>2$:
\begin{equation} \label{sec6-eq:I1-3}
\begin{split}
I_{12}&\leq C~|H-\tfrac{1}{2}|
+ C~|H-\tfrac{1}{2}|~\int_\alpha^\infty~ 
 \frac{1+s^2}{(1+s)^{\vartheta+1}}~ds.  \\
&\leq C~|H-\tfrac{1}{2}|.
\end{split}
\end{equation}

Now, consider the second integral in the right-hand side of~\eqref{I1-majoration-sec6}. Bound the exponential from above by~$1$. Then  use \eqref{cond:tail-tau_H_Y} and \eqref{cond:Pi_H}. It comes:
\begin{align*}
C_H~&|H-\tfrac{1}{2}|~(1+\lambda)
~\sqrt{\ratet_2(\mathbb{Y}-y_0,\lambda)}
\int_0^\infty \sqrt{\PP(\tau^H_Y \geq s)}
~\Pi_H(s) ~(1+s^{2})~ds\\
&\leq C_H~|H-\tfrac{1}{2}|~(1+\lambda)
~\sqrt{\ratet_2(\mathbb{Y}-y_0,\lambda)}.
\end{align*}
This observation combined with \eqref{sec6-eq:I1-2} and 
\eqref{sec6-eq:I1-3} provides the desired result~\eqref{ineq:I_1-sec6}.
\end{proof}

\paragraph{Bound on $I_{2}$.} Recall that
$$ I_2(\lambda) := \lim_{N\rightarrow +\infty} \EE\Big[ 
\left.\delta_H^{(N)}\left(\indi{[0,t]}(\cdot) e^{-\lambda \cdot} 
\wl'(Y_\cdot^H) \right)\right|_{t=\tau^H_Y\wedge N}\Big]. $$

Under the preceding hypotheses the proposition~\ref{prop:I2'} can be improved as follows.
\begin{proposition}\label{prop:I2'-sec6}
Under the hypotheses of Theorem~\ref{prop:majgap_0_bis}  for any $\lambda>0$ 
one has
\begin{equation*}
|I_2(\lambda)| 
\leq 
C_H~|H-\tfrac{1}{2}|
~(1+\lambda)^2~
\Big( \left(\ratet_{2}(\mathbb{Y}-y_0,{\lambda}) \right)
^{\frac{H\wedge\frac{1}{2}}{6}}
+ \left(\ratet_{4}(\mathbb{Y}-y_0,{\lambda})\right)
^{\frac{H\wedge\frac{1}{2}}{12}} \Big).
\end{equation*}
\end{proposition}

\begin{proof}
Recall that the field $\{U^{(N)}_t(v),v\geq0,t>0\}$ 
and the process $\{\Upsilon^{(N)}_t, t>0\}$ are defined by \eqref{def:U-Nt} and \eqref{def:Upsilon-Nt}. 
Recall also that $ I_2(\lambda) := \lim_{N\rightarrow \infty} \EE ( \Upsilon^{(N)}_{\tau^H_Y\wedge N} ). $
As $\Upsilon^{(N)}_0=0$, 
for any $t>0$ we have
$$ \Upsilon^{(N)}_t = \Upsilon^{(N)}_t - \Upsilon^{(N)}_{[t]}
+ \sum_{n=1}^{[t]}(\Upsilon^{(N)}_n - 
\Upsilon^{(N)}_{n-1})~\indi{t\geq1}. $$
Therefore, for any $p_0>1$ to be chosen later one has
\begin{equation*}\label{sec6-eq:1stBoundI2}
\begin{split}
|I_2(\lambda)| 
&\leq  \lim_{N\rightarrow \infty}~ \sum_{n=0}^{N-1}
\EE\big( \sup_{t\in [n,n+1]} |\Upsilon^{(N)}_t - 
\Upsilon^{(N)}_n|~\indi{\tau^H_Y\geq n} \big) \\
&\leq  \lim_{N\rightarrow \infty}~ \sum_{n=0}^{N-1}
\Big(\EE\big( \sup_{t\in [n,n+1]} |\Upsilon^{(N)}_t - 
\Upsilon^{(N)}_n|^{p_0}\big)\Big)^{\frac{1}{p_0}}
~\big(\PP(\tau^H_Y\geq n)\big)^{\frac{p_0-1}{p_0}}.
\end{split}
\end{equation*}
In order to estimate the right-hand side of the preceding inequality
we apply the corollary of Garsia-Rodemich-Rumsey's lemma given in Lemma~\ref{lem:GRR}. 
We thus obtain, for $p=p_{0}$ and $p_{0}q>2$,
\begin{equation}\label{sec6-eq:1stBoundI2-bis}
|I_2(\lambda)| 
\leq  C~\lim_{N\rightarrow \infty}~ \sum_{n=0}^{N-1}
\Big(\frac{q}{q-2}\Big)^{\frac{1}{p_0}} 
\left(\int_n^{n+1} \int_n^{n+1} 
\frac{\EE\big(|\Upsilon^{(N)}_t-\Upsilon^{(N)}_s|^{p_0}\big)}
{|t-s|^{p_{0}q}}\ ds~dt\right)^{\frac{1}{p_0}}
~\big(\PP(\tau^H_Y\geq n)\big)^{\frac{p_0-1}{p_0}}.
\end{equation}
We now use the inequality
\begin{equation*}
~~\EE\left(|\Upsilon^{(N)}_t-\Upsilon^{(N)}_s|^{p_0}\right) 
\leq \left(\EE|\Upsilon^{(N)}_t-\Upsilon^{(N)}_s|^{2(p_0-1)}
\right)^{\frac{1}{2}} \times 
\left\|\Upsilon^{(N)}_t-\Upsilon^{(N)}_s\right\|_2.
\end{equation*}
Then, obvious modifications of the subsections~\ref{subsec:lemma-L2-estimates} 
and~\ref{subsec:LpEstimates} and the conditions~\eqref{cond:tail-tau_H_Y},
\eqref{cond:Pi_H} lead to the following: For any $0<s<t<N$ with $0<t-s<1$ it holds that
\begin{equation*}
\begin{split}
\left\|\Upsilon^{(N)}_t-\Upsilon^{(N)}_s\right\|_2
\leq C_H~|H-\tfrac{1}{2}|&~(t-s)^{H\wedge\frac{1}{2}} 
~(1+|\log(t-s)|)~(1+\lambda)^2 
~(1+t^2)~e^{-\frac{1}{2} \lambda s}~\Pi_H(s)
\\
&\times\Big(
\left(\ratet_{2}(\mathbb{Y}-y_0,{\lambda}) \right)^\frac{1}{2}
+\left(\ratet_{4}(\mathbb{Y}-y_0,{\lambda})\right)^\frac{1}{4}
\Big),
\end{split}
\end{equation*}
and for any $p_0\geq 2$,
\begin{equation*}
\begin{split}
\left(\EE|\Upsilon^{(N)}_t-\Upsilon^{(N)}_s|^{2(p_0-1)}
\right)^{\frac{1}{2}}  
&\leq C_H~|H-\tfrac{1}{2}|^{p_0-1}~(t-s)^{(p_0-1)(H\wedge\frac{1}{2})} 
~(1+|\log(t-s)|)^{p_0-1} \\
&~~~~~~~(1+\lambda)^{2(p_0-1)}
~(1+t^2)^{p_0-1}~e^{-(p_0-1)\lambda s} ~\Pi_H(s)^{p_{0}-1}.
\end{split}
\end{equation*}
Coming back to~\eqref{sec6-eq:1stBoundI2-bis} and choosing
$q=H\wedge \frac{1}{2}$ and $p_0$ large enough (such that $p_{0}q>2$), we get
\begin{align*}\label{I2-majoration}
|I_2(\lambda)| 
&\leq C_H~|H-\tfrac{1}{2}|
~(1+\lambda)^2
~\left((\ratet_{2}(\mathbb{Y}-y_0,{\lambda}))
^{\frac{1}{2p_{0}}}
+ (\ratet_{4}(\mathbb{Y}-y_0,{\lambda}))
^{\frac{1}{4p_{0}}}\right) \nonumber \\
&\quad\quad \times 
\lim_{N\rightarrow \infty}~\sum_{n=0}^{N-1} 
~(1+(n+1))^2
~\Pi_H(n)~\left(\int_{n}^{n+1}\int_{n}^{n+1}
\big(1+ |\log(t-s)|\big)^{p_{0}}~ds~ 
dt\right)^{\frac{1}{p_{0}}} \nonumber\\
&~~~~~~~~~~~~~~~~~~~~~~~~~
\times \big(\PP(\tau^H_Y\geq n)\big)^{\frac{p_0-1}{p_0}}
\\
 &\leq  C_H~|H-\tfrac{1}{2}|
~(1+\lambda)^2
~\left( 
(\ratet_{2}(\mathbb{Y}-y_0,{\lambda}))^{\frac{1}{2p_{0}}}
+ (\ratet_{4}(\mathbb{Y}-y_0,{\lambda}))
^{\frac{1}{4p_{0}}}\right), \nonumber
\end{align*}
where we used \eqref{cond:Pi_H} and a series-integral comparison to obtain the last line.
\end{proof}

\subsection{Sufficient conditions on $\widetilde{b}$ for weak convergence rates} 
\label{subsec:cond-drift}

We now discuss conditions on the drift $\widetilde{b}$ which imply the 
conditions~\eqref {cond:tail-tau_H_Y} and~\eqref{cond:Pi_H}  in 
Theorem~\ref{prop:majgap_0_bis} and therefore permit to apply the proposition~\ref{prop:estim-aletti}.

The following result is obvious.
\begin{proposition} \label{prop:b-decroissante}
Let the assumptions of Theorem~\ref{prop:majgap_0} hold true. 
 
 In addition, suppose that 
 \begin{equation}\label{hyp:btildeprimenegative}
\widetilde{b}'(x) \leq 0 ,\quad \forall x\in \R,
 \end{equation}
and that
\begin{equation} \label{cond::tail-tau_H-H-qcq}
\exists \vartheta>3,~~\forall H\in (\tfrac{1}{4},1),~~ \PP(\tau^H_Y \geq t) \leq \frac{C_H}{(1+t)^{2\vartheta}},\quad \forall t\geq 0.
\end{equation}
Then, the laws of $\tau_{X}^H$ and $\tau_{\mathbf{X}}$ are non defective for any $H\in (\tfrac{1}{4},1)$, $\tau_{X}^H$ weakly converges to $\tau_{\mathbf{X}}$ as $H\to \frac{1}{2}$ and
\begin{equation}\label{eq:boundAletti}
d_{A}(\tau_{X}^H, \tau_{\mathbf{X}})  \leq 
C_H~|H-\tfrac{1}{2}|.
\end{equation}
\end{proposition}

\begin{proof}
The hypothesis \eqref{hyp:btildeprimenegative} on $\widetilde{b}'$ implies that 
one can choose $\Pi_H(t)\equiv 1$. One can thus obtain~\eqref{cond:Pi_H} by 
using~\eqref{cond::tail-tau_H-H-qcq}.
Observe also that~\eqref{cond::tail-tau_H-H-qcq} is stronger than~\eqref {cond:tail-tau_H_Y} and obviously implies the non defectiveness of the laws of $\tau_{X}^H$ and $\tau_{\mathbf{X}}$.
Therefore, the conclusion
follows from~Theorem~\ref{prop:majgap_0_bis} and
Proposition~\ref{prop:estim-aletti}.
\end{proof}

The stringent monotonicity condition~\eqref{hyp:btildeprimenegative} allowed us to get \eqref{eq:boundAletti} for any $H\in(\tfrac{1}{4},1)$. 
When $H$ is restricted to the interval $(\tfrac{1}{4},\tfrac{1}{2})$, we are going to replace this condition by a more satisfying one.

Observe that the fractional Brownian motion does not allow one to apply 
Theorem~\ref{prop:majgap_0_bis} since $\PP(\tau_{B^H}\geq 1+t) \geq 
\frac{\log(1+t)^{-\gamma}}{(1+t)^{1-H}}$
for some $\gamma>0$ (see~\cite{Aurzada}). The strict ellipticity of the drift 
$\widetilde{b}$ is a natural condition to obtain~\eqref{cond:tail-tau_H_Y}. The following lemma provides a tail estimate which implies~\eqref{cond:tail-tau_H_Y} and will be used to prove the theorem~\ref{theo:application-aletti-b-elliptique} below.

\begin{lemma} \label{lemma:queue-distribution-tau}
Suppose that the drift~$\widetilde{b}$ satisfies
\begin{equation} \label{hypo-b-cas-non-defectif}
\exists \nu>0,~\mu>0,~\forall x\in\mathbb{\R},
~~0<\nu \leq \widetilde{b}(x) \leq \mu.
\end{equation}
There exists $C>0$ such that for any 
$\frac{1}{4}<H<1$, $\nu>0$, $\mu>0$ and 
$t>\frac{m}{\mu}$ one has
\begin{equation} \label{ineq:tail-estimate}
\PP(\tau_Y^H \geq t) \leq \frac{C}{1\vee t^{1-H}}~\exp\Big(-\tfrac{1}{2}
~\frac{(\nu~t-m)^2}{t^{2H}}\Big).
\end{equation}
Therefore, $\PP(\tau_{Y}^H = \infty) =0$.
\end{lemma}

\begin{proof}
As previously, let $m:=\mathbb{Y}-y_0$. One has
\begin{equation*}
\begin{split}
\PP(\tau_Y^H \geq t) &\leq \PP(\sup_{s\leq t} (B^H_s 
+ \nu~s) < m) \\
&= \PP(\sup_{u\leq 1} (B^H_{tu} + \nu~tu) < m) \\
&= \PP\Big(\sup_{u\leq 1} 
(t^{H-1}~B^H_u + \nu~u) < \tfrac{m}{t}\Big) \\
&\leq \PP\Big(t^{H-1}~B^H_1 + \nu < \tfrac{m}{t}\Big).
\end{split}
\end{equation*}
The desired result follows from standard inequalities on Gaussian distributions.
\end{proof}

In the irregular case~$\tfrac{1}{4}<H<\tfrac{1}{2}$ the condition~\eqref{hypo-b-cas-non-defectif} is sufficient 
to quantify the weak convergence of $\tau_{X}^H$ when $H$ increases
to~$\frac{1}{2}$, as shown by the following theorem.

\begin{theorem} \label{theo:application-aletti-b-elliptique}
Let $\tfrac{1}{4}<H<\tfrac{1}{2}$.

In addition to the assumptions of Theorem~\ref{prop:majgap_0} assume that the function $\widetilde{b}$ satisfies~\eqref{hypo-b-cas-non-defectif}. Then, $\tau_{X}^H$ weakly converges to $\tau_{\mathbf{X}}$ as $H\nearrow \frac{1}{2}$ and
\begin{equation*}
d_{A}(\tau_{X}^H, \tau_{\mathbf{X}})  \leq 
C_H~|H-\tfrac{1}{2}|.
\end{equation*}
\end{theorem}

\begin{proof}
Observe that $\Pi_H(t) \leq C \exp(|\widetilde{b}'|_\infty t)$. Since $H<\tfrac{1}{2}$, Lemma~\ref{lemma:queue-distribution-tau} implies \eqref{cond:Pi_H}. 
The result then follows from Theorem~\ref{prop:majgap_0_bis} and Proposition~\ref{prop:estim-aletti}. 
\end{proof}

\begin{remark}
Unfortunately, when $\tfrac{1}{2}<H<1$ the ellipticity condition~\eqref{hypo-b-cas-non-defectif} 
is far from being sufficient to imply~\eqref{cond:Pi_H}. On the one hand, 
without additional assumptions on~$\widetilde{b}$, the only 
bound from above for
$\Pi_H(t)$ is~$C \exp(|\widetilde{b}'|_\infty t)$. On the other hand, for these values of~$H$, one cannot expect that the law of $\tau_{Y}^H$ has exponential moments when~$H>\frac{1}{2}$ under the sole condition~\eqref{hypo-b-cas-non-defectif}.
Actually, Prakasa Rao has obtained asymptotic tail estimates for the running maxima of drifted fractional Brownian motions~\cite{Prakasa-Rao}.
These estimates suggest that  the rate of the exponential decay of the tails 
of~$\tau_{Y}^H$ is close to the bound from above provided in Lemma~\ref{lemma:queue-distribution-tau}. 

We have not succeeded to relax the monotonicity hypothesis in~Proposition~\ref{prop:b-decroissante} in order to obtain a satisfying generic result when $\tfrac{1}{2}<H<1$. In this situation, we are only able to suggest to check the hypotheses
of Theorem~\ref{prop:majgap_0_bis} on a case-by-case basis.
\end{remark}

The following table summarises the results obtained in this section for $H\in (\tfrac{1}{4},1)$.

{\renewcommand{\arraystretch}{1.2}
\begin{center}
\begin{tabular}{|l||l|l|}
\hline
    & $\widetilde{b}'\leq 0$ & No condition on $\widetilde{b}'$ \\
\hline\hline
				  & - The law of $\tau_{X}^H$  is non defective  & - The law of $\tau_{X}^H$  is non defective  \\
			    &  - \eqref{ineq:Laplace-transform-main-theorem-hypo-renforcees} holds true 		    
			   & - If $H<\frac{1}{2}$, 
 \eqref{ineq:Laplace-transform-main-theorem-hypo-renforcees} holds true \\
$\inf \widetilde{b}>0$ 	    
			    & - Weak convergence & - Weak convergence for $H\nearrow \frac{1}{2}$  \\
			    & - Proposition~\ref{prop:b-decroissante}   &  - If $H<\frac{1}{2}$, Theorem~\ref{theo:application-aletti-b-elliptique}\\
			    &	& - If $H>\frac{1}{2}$, no improvement of Theorem~\ref{prop:majgap_0} \\
\hline			    
			 & - The law of $\tau_{X}^H$ may be defective   & - The law of $\tau_{X}^H$ may be defective  \\
  No condition & - \eqref{ineq:Laplace-transform-main-theorem} holds true with
  $\widetilde{\lambda}=\lambda$ 
  & - No improvement of Theorem~\ref{prop:majgap_0}\\
  		 on $\widetilde{b}$	& ~~~~and any $\lambda\geq0$& \\
  & - Possibly no weak convergence		& \\
 \hline
\end{tabular}
\end{center}
}

\section{Conclusion and perspectives}\label{sec:conclusion}

In this paper we have developed a sensitivity analysis w.r.t. the Hurst 
parameter of the driving noise for the probability distribution of 
functionals of solutions to stochastic differential equations, 
including the probability distribution of first hitting times,
when the Hurst parameter is close to $1/2$, that is, when the noise is 
close to the pure Brownian case.
Our estimates seem accurate.
As explained in the introduction, in practice they tend to justify the 
use of Markov Brownian models when estimated Hurst parameters remain 
close to~$\tfrac{1}{2}$.
In principle, by using similar analytical tools as above, it should be 
possible to get expansions in terms of $|H-\tfrac{1}{2}|$. However, the 
calculations would be still much more lengthy and heavy than above.
The following open questions deserve future works. 

\vspace{0.1cm}
It would be interesting to extend our results to SDEs driven by a 
Gaussian noise with general kernel $K$ and to estimate the sensitivity 
of first hitting time Laplace transforms in 
terms of the $L^2$ distance between $K$ and $K_{\frac{1}{2}}$.

\vspace{0.1cm}
The ellipticity condition \ref{hyp:h2'} may seem restrictive but it 
seems difficult to get rid of it. A natural attempt is as follows. When $H=\tfrac{1}{2}$ the 
SDE~(\ref{eq:fSDE_0}) can 
be written
in the following It\^o's form:
\begin{equation*}
X_t = x_0 + \int_0^t \left(b(X_s) + 
\tfrac{1}{2}\sigma(X_s)\sigma'(X_s)\right)\ ds + \int_0^t 
\sigma(X_s)\ dW_s.
\end{equation*}
Let $u$ be the solution to the following parabolic PDE 
\begin{equation*}
\begin{cases}
\frac{\partial }{\partial s}u(s,x) + (b(x)+\tfrac{1}{2}\sigma(x)\sigma'(x)) \frac{\partial}{\partial x} u(s,x) + \tfrac{1}{2}\sigma^2(x) \frac{\partial^2}{\partial x^2} u(s,x) = 0,
~~(s,x)\in [0,t)\times \R, \\
u(t,x) = \varphi(x),~~x\in \R.
\end{cases}
\end{equation*} 
Recall the calculation made in Section~\ref{subsec:proof_gapLawXt^H}.
For $\tfrac{1}{2}<H<1$ Itô's formula applied to $u(t,X^H_t)$ leads to
\begin{align*}
&\EE\left(u(t,X_t^H)\right) - u(0, x_0) = \\
& -\EE\int_0^t \tfrac{1}{2} (\sigma \sigma')(X_s^H)\ \partial_x 
u(s,X_s^H) \ ds + \alpha_H \EE\int_0^t \int_0^T |r-s|^{2H-2} 
\sigma(X_r^H) \sigma'(X_s^H) \partial_x u(s,X_s^H)~dr~ds\\
&-\EE\int_0^t \tfrac{1}{2} \sigma^2(X_s^H)\ \partial^2_{xx} u(s,X_s^H) 
~ds + \alpha_H \EE\int_0^t \int_0^T |r-s|^{2H-2} \sigma(X_r^H) 
\sigma(X_s^H) \partial^2_{xx} u(s,X_s^H)~dr~ds	\\
&+\alpha_H \EE\int_0^t \int_0^T |r-s|^{2H-2} (D_r X_s^H-\sigma(X_r^H)) 
\left(\partial^2_{xx} u(s,X_s^H) \sigma(X_s^H) + \partial_x u(s,X_s^H) 
\sigma'(X_s^H)\right)~dr~ds.
\end{align*}
First, note that the hypothesis \ref{hyp:h2'} helps to get sharp 
estimates on derivatives of $u$. Second, without this hypothesis we 
have not succeeded to obtain accurate enough bounds on the sup and 
$\mathcal{H}_H$ norms of $D_r X_s^H$ to deduce relevant sensitivity 
estimates w.r.t~$H$. When $H>\tfrac{1}{2}$, the estimates obtained 
in~\cite{HuNualart} on the supremum and Hölder norm of $X^H$ and $DX^H$ 
do not require the ellipticity of $\sigma$. However, they depend on the 
H\"older norm $|B^H|_{\alpha,0,T}$, 
where $\alpha\in(\tfrac{1}{2},H)$, which tends to infinity 
when 
$H\rightarrow \tfrac{1}{2}$.

\vspace{0.1cm}
A sensitivity analysis of the density of~$\tau^H_X$ would certainly be 
useful for applications. 
Our estimate on the Laplace transform of $\tau^H_X$ gives information 
on the robustness of this density 
around time~0 when $H$ is close to~$\tfrac{1}{2}$. This seems 
interesting since the simulations 
in~\cite{DelormeWiese} suggest that, when $H>\tfrac{1}{2}$, the 
density of $\sup_{t\in[0,1]} B^H_t$ is unbounded around~$0$. To go 
further, one should compute the inverse Laplace transform of
the formula for
$\EE\left(e^{-\lambda \tau^H_X}\right) - \EE\left(e^{-\lambda 
\tau_X^{1/2}}\right)$ given in Proposition~\ref{prop:gaps}.
Handling technical issues raised by the singularity of the inverse 
Laplace transform and by terms whose Malliavin derivatives are  
singular is out of the scope of the present paper.

\vspace{0.1cm}
The extension of our analysis to multidimensional SDEs and first exit 
times of domains is another interesting further direction of research.

\vspace{0.1cm}
Finally, as explained at the end of the Introduction, 
sharp sensitivity analyses around $H\neq\frac{1}{2}$ seems to be a 
challenging problem.

~

\paragraph{Acknowledgments.}
The authors thank an anonymous referee for her/his comments on the first version of the paper which have led us to add the section 6.

\smallskip


\appendixpage
\begin{appendices}


\section{Representation of $K_{H}^*$ on $|\HH|$} \label{app:KH-etoile}
Denote by $\mathcal{E}$ the set of simple functions on $[0,T]$. We 
recall that $|\HH| \subset \HH$ is the completion of $\mathcal{E}$ 
with respect to the norm~\eqref{eq:defAbsH} (respectively, 
\eqref{eq:defAbsH<}) when $H>\tfrac{1}{2}$ (respectively,
$H<\tfrac{1}{2}$).

Consider the operator $\widetilde{K}_{H}^*: 
\mathcal{E}\to L^2[0,T]$ defined by
\begin{align*}
\widetilde{K}_{H}^*\varphi(t) = K_{H}(T,t)\varphi(t) + \chi_{H} 
(H-\tfrac{1}{2}) \int_{t}^T 
\left(\frac{\theta}{t}\right)^{H-\frac{1}{2}} 
\left(\theta-t\right)^{H-\frac{3}{2}} \left(\varphi(\theta) - 
\varphi(t)\right) ~d\theta .
\end{align*}
In view of~\eqref{eq:KHbis} this operator coincides with 
$K_{H}^*$ on $\mathcal{E}$. As $|\HH|\subset \HH$ is a continuous 
embedding we have
\begin{align*}
\forall \varphi\in \mathcal{E},\quad 
\|\widetilde{K}^*_{H}\varphi\|_{L^2[0,T]} = \|\varphi\|_{\HH} \leq C 
\|\varphi\|_{|\HH|} ,
\end{align*}
which implies that $\widetilde{K}^*_{H}$ can be continuously extended
to an operator from~$|\HH|$ to~$L^2[0,T]$. 

Let us now prove that 
$\widetilde{K}^*_{H}$ and $K^*_{H}$ coincide on $|\HH|$. Let 
$\varphi\in |\HH|$ and let $(\varphi_{n})\in \mathcal{E}^\N$ be a 
sequence which converges to $\varphi$ in $|\HH|$ (and thus also 
converges in~$\HH$). We have
\begin{equation*}
\begin{split}
\|\widetilde{K}^*_{H}\varphi - K_{H}^*\varphi\|_{L^2[0,T]} &\leq 
\|\widetilde{K}^*_{H} (\varphi-\varphi_{n})\|_{L^2[0,T]}  + 
\|K_{H}^*(\varphi_{n}-\varphi)\|_{L^2[0,T]} \\
&\leq C \|\varphi-\varphi_{n}\|_{|\HH|} + \|\varphi-\varphi_{n}\|_{\HH}.
\end{split}
\end{equation*}
Since the right-hand side of the preceding inequality converges to~$0$
when~$n$ tends to infinity we conclude that $\widetilde{K}^*_{H}\varphi 
= 
K_{H}^*\varphi$. 
Therefore the representation~\eqref{eq:KHbis} of $K_{H}^*$ 
holds true for any $\varphi\in |\HH|$.

\section{On various deterministic integrals depending on $K_H$ and 
$\partial_\theta K_H$}\label{App:tecLem}

Given $\tfrac{1}{4}<H<1$ and $0<r<t$, set
\begin{equation} \label{def:A(r,v)}
\begin{cases}
\mathcal{A}(v,r,t) := \int_r^t 
|\partial_\theta K_H(\theta,v)|~K_H(\theta,r)~d\theta
~~~\text{for}~0<s<v<t, \\
\mathcal{A}^\sharp(v,r,t) := \int_v^t 
|\partial_\theta K_H(\theta,v)|
~K_H(\theta,r)~(\theta-v)^H~d\theta
~~~\text{for}~0<s<v<t, \\
\mathcal{A}^\flat(v,r,t) :=  \int_v^t 
|\partial_\theta K_H(\theta,v)|
~(K_H(\theta,r)-K_H(v,r))~d\theta
~~~\text{for}~0<s<v<t, \\
\mathcal{A}^\natural(v,r,t) :=  \int_s^t 
|\partial_\theta K_H(\theta,v)|~K_H(\theta,r)~d\theta
~~~\text{for}~0<v<s<t.
\end{cases}
\end{equation}

In many calculations we need to consider time intervals $0<s<t$ with 
$0<t-s<1$ and the integrals
$$ \int_s^t \int_v^t (\mathcal{A}(v,r,t))^2~dr~dv,~~~
\int_s^t \int_0^v (\mathcal{A}^\sharp(v,r,t))^2~dr~dv,~~\text{etc.} $$
We need to bound these integrals from above by a constant of the type 
$C_H~|H-\tfrac{1}{2}|^2~(t-s)^\delta~t^k$ 
for some $\delta>0$ and $k\geq0$. 
Getting such an estimate requires totally different arguments 
according as~$\tfrac{1}{2}<H<1$ or $\tfrac{1}{4}<H<\tfrac{1}{2}$ 
because of the difference of behaviour of the kernel $K_H$ in these two 
cases:

\begin{itemize}
\item \textbf{In the singular case $H<\tfrac{1}{2}$}
we have $(\frac{\theta}{r})^{H-\frac{1}{2}}<1$ for any $\theta>r$. 
The formulae~\eqref{def:KH} 
and \eqref{eq:deriv-K_H} respectively lead to
\begin{equation} \label{eq:K_H-simplifiee-H-petit}
0 \leq K_H(\theta,r) \leq 
C~\indi{\{\theta>r\}}~\left((\theta-r)^{H-\frac{1}{2}} + 
|H-\tfrac{1}{2}|~r^{\frac{1}{2}-H} \int_r^\theta 
\xi^{H-\frac{3}{2}}~(\xi-r)^{H-\frac{1}{2}}~d\xi \right)
\end{equation}
and
\begin{equation} \label{eq:deriv-K_H-simplifiee-H-petit}
-\partial_\theta K_H (\theta,v)
= \left|\partial_\theta K_H (\theta,v)\right| 
\leq \indi{\{\theta>v\}}~C~|H-\tfrac{1}{2}|~(\theta-v)^{H-\frac{3}{2}}.
\end{equation}

\item \textbf{In the regular case $H>\tfrac{1}{2}$} we have
\begin{equation} \label{eq:K_H-simplifiee-H-grand}
0 \leq K_H(\theta,r) 
\leq C~\indi{\{\theta>r\}}~\left(\frac{\theta}{r}\right)^{H-\frac{1}{2}}
~(\theta-r)^{H-\frac{1}{2}}
\end{equation}
and
\begin{equation} \label{eq:deriv-K_H-simplifiee-H-grand}
0 \leq \partial_\theta K_H (\theta,v) 
\leq C~\indi{\{\theta>v\}}~|H-\tfrac{1}{2}|
~\left(\frac{\theta}{v}\right)^{H-\frac{1}{2}}
~(\theta-v)^{H-\frac{3}{2}}.
\end{equation}                 

\end{itemize}

\subsection{Estimate on~$\int_s^t 
\int_v^t (\mathcal{A}(v,r,t))^2~dr~dv$: The regular case 
$\tfrac{1}{2}<H<1$}

\begin{proposition}
Let $\mathcal{A}(v,r,t)$ be defined as in~\eqref{def:A(r,v)}.
For any $\tfrac{1}{2} < H < 1$ 
 it holds that
\begin{equation} \label{ineq:L2-maj-A(r,v)-H-grand}
\int_s^t 
\int_v^t (\mathcal{A}(v,r,t))^2~dr~dv 
\leq C_H~|H-\tfrac{1}{2}|^2
~(t-s)^{3-2H}~t^{6H-3}.
\end{equation}
\end{proposition}

\begin{proof}
Since $H>\tfrac{1}{2}$, in view of~\eqref{eq:K_H-simplifiee-H-grand} 
and~\eqref{eq:deriv-K_H-simplifiee-H-grand} we have
$$ |\mathcal{A}(v,r,t)| \leq 
C~|H-\tfrac{1}{2}| \left(\frac{t^2}{v~r}\right)^{H-\frac{1}{2}}
\int_r^t 
(\theta-v)^{H-\frac{3}{2}} (\theta-r)^{H-\frac{1}{2}} d\theta
\leq 
C~|H-\tfrac{1}{2}|~\frac{t^{3H-\frac{3}{2}}}{(v~r)^{H-\frac{1}{2}}}
~\int_r^t (\theta-v)^{H-\frac{3}{2}}~d\theta. $$
We thus are led to consider
$$ \int_v^t \frac{1}{r^{2H-1}}~
\left(\int_r^t (\theta-v)^{H-\frac{3}{2}}~d\theta \right)^2 dr
\leq \int_v^t \frac{1}{(r-v)^{2H-1}}
\left(\int_r^t (\theta-v)^{H-\frac{3}{2}}~d\theta \right)^2 dr. $$
We need some care to get a bound on the right-hand side of the 
preceding inequality which does not explode when $H$ tends to 
$\frac{1}{2}$. By two successive integrations by parts we get that the 
right-hand side equals to
\begin{equation*}
\begin{split}
\tfrac{1}{1-H} \int_v^t 
(r-v)^{2-2H} \left(\int_r^t(\theta-v)^{H-\frac{3}{2}}~d\theta\right)
(r-v)^{H-\frac{3}{2}} dr
&= C_H \int_v^t 
(r-v)^{\frac{1}{2}-H} \int_r^t(\theta-v)^{H-\frac{3}{2}} d\theta~dr \\
&= C_H~(t-v).
\end{split}
\end{equation*}
As $t-v\leq t-s$ we deduce that
\begin{equation*}
\begin{split}
\int_s^t 
\int_v^t (\mathcal{A}(v,r,t))^2~dr~dv 
&\leq C_H~|H-\tfrac{1}{2}|^2~(t-s)
~t^{6H-3}~\int_s^t 
\frac{1}{v^{2H-1}}~dv \\
&\leq C_H~|H-\tfrac{1}{2}|^2~(t-s)
~t^{6H-3}~(t^{2-2H}-s^{2-2H}).
\end{split}
\end{equation*}
As $\tfrac{1}{2}<H<1$ one has $t^{2-2H}-s^{2-2H} \leq 
(t-s)^{2-2H}$.
That ends the proof.
\end{proof}

\subsection{Estimate on~$\int_s^t 
\int_v^t (\mathcal{A}(v,r,t))^2~dr~dv$: The irregular case 
$\tfrac{1}{4}<H<\tfrac{1}{2}$}

In the irregular case the calculations are longer than in the 
regular case. We start with an easy lemma.

\begin{proposition}
For any~$T>0$ we have
\begin{equation} \label{ineq:integrale-puissance-et-log}
\forall \gamma>-1,~~\int_0^T \theta^\gamma~
(\log(\theta))^2~d\theta \leq \frac{C}{\gamma+1}~T^{\gamma+1}
~(\log(T))^2 + \frac{C}{(\gamma+1)^3}~T^{\gamma+1}.
\end{equation}
For any $0<s<t$ we have
\begin{equation} \label{ineq:L2-maj-I(r,v)}
\forall \gamma>-1,~~\int_s^t\int_v^t 
(r-v)^\gamma~\left(\log(\frac{t-v}{r-v})\right)^2~dr~dv
\leq \frac{C}{(\gamma+1)^2}~(t-s)^{\gamma+2}~(\log(t-s))^2
+ \frac{C}{(\gamma+1)^4}~(t-s)^{\gamma+2}.
\end{equation}
\end{proposition}

\begin{proof}
Two successive integrations by parts lead to 
\begin{equation*}
\begin{split}
\int_0^T \theta^\gamma~(\log(\theta))^2~d\theta
&\leq \frac{1}{\gamma+1}~T^{\gamma+1}~(\log(T))^2
+\frac{2}{(\gamma+1)^2}~T^{\gamma+1}~|\log(T)|
+\frac{2}{(\gamma+1)^3}~T^{\gamma+1} \\
&\leq \frac{1}{\gamma+1}~T^{\gamma+1}~( |\log(T)| + 
\frac{1}{\gamma+1})^2.
\end{split}
\end{equation*}
The inequality~\eqref{ineq:integrale-puissance-et-log} follows.

To prove~\eqref{ineq:L2-maj-I(r,v)} we start with deducing
from~\eqref{ineq:integrale-puissance-et-log} that
$$ \int_v^t (r-v)^\gamma~(\log(r-v))^2~dr
= \int_0^{t-v} r^\gamma~(\log(r))^2~dr 
\leq \frac{C}{(\gamma+1)^3}~(t-v)^{\gamma+1}
~\left((\gamma+1)^2~(\log(t-v))^2+1 \right). $$
In addition,
$$ \int_s^t\int_v^t (r-v)^\gamma~(\log(t-v))^2~dr~dv 
= \frac{1}{\gamma+1} \int_s^t (t-v)^{\gamma+1}~(\log(t-v))^2~dv. $$
Therefore, as $\gamma+2>1$ the left-hand side 
of~\eqref{ineq:L2-maj-I(r,v)} is bounded from above by
$$ \frac{C}{\gamma+1}~\int_0^{t-s} \theta^{\gamma+1}
~(\log(\theta))^2~d\theta + \frac{C}{(\gamma+1)^4}(t-s)^{\gamma+2}. $$
It remains to again use~\eqref{ineq:integrale-puissance-et-log} (with 
$T=t-s$) to get~\eqref{ineq:L2-maj-I(r,v)}.
\end{proof}

We will need to consider the integral $I(v,r,t)$ defined 
for~$0\leq v<r<t$ by
\begin{equation} \label{def:I(r,v)}
I(v,r,t) := \int_r^t 
(\xi-v)^{H-\frac{3}{2}}~(\xi-r)^{H-\frac{1}{2}}~d\xi.
\end{equation} 
It will be decisive to bound~$I(v,r,t)$ from above by a function of 
$r$~and~$v$ which is 
square integrable and  involves constants which do not explode when 
$H$~tends to $\tfrac{1}{2}$. 

\begin{proposition} 
For any $\tfrac{1}{4}<H<\tfrac{1}{2}$ and $0\leq v<r<t$, let 
$I(v,r,t)$ be defined as in~\eqref{def:I(r,v)}. One has
\begin{equation} \label{ineq:maj-I(r,v)}
I(v,r,t) \leq C~(r-v)^{2H-1}~\left(\log(\frac{t-v}{r-v}) + 1 \right).
\end{equation} 
\end{proposition}

\begin{proof}
Use the change of variable $\xi = r+\frac{r-v}{\alpha}$. It comes:
$$ I(v,r,t) = (r-v)^{2H-1}~\int_{\frac{r-v}{t-r}}^\infty 
(1+\alpha)^{H-\frac{3}{2}}~\alpha^{-2H}~d\alpha. $$
Now, split the integration interval into $(\frac{r-v}{t-r}\wedge 1,1)$
and $(1,+\infty)$. As for the integral over $(1,+\infty)$ 
we observe that
$$ \int_1^\infty (1+\alpha)^{H-\frac{3}{2}}~\alpha^{-2H}~d\alpha
\leq \int_1^\infty\alpha^{-H-\frac{3}{2}}~d\alpha \leq C. $$
As for the integral over $(\frac{r-v}{t-r}\wedge 1,1)$,
for any~$0<z<1$ and $\tfrac{1}{4}<H<\tfrac{1}{2}$ one has
$$ \int_z^1 (1+\alpha)^{H-\frac{3}{2}}~\alpha^{-2H}~d\alpha
\leq \int_z^1 \alpha^{-2H}~d\alpha 
\leq \int_z^1 \alpha^{-1}~d\alpha
= \log(\frac{1}{z}). $$
For $z=\frac{r-v}{t-r}\wedge 1$ one has
$\log(\frac{1}{z}) \leq \log(\frac{t-v}{r-v})$.
The desired result follows.
\end{proof}

We now are in a position to get the main result in this subsection.

\begin{proposition} \label{lemma:dKH-v-KH-r}
Let $\mathcal{A}(v,r,t)$ be defined as in~\eqref{def:A(r,v)}.
For any $\tfrac{1}{4} < H < \tfrac{1}{2}$ one has
\begin{equation} \label{ineq:L2-maj-A(r,v)}
\int_s^t \int_v^t (\mathcal{A}(v,r,t))^2~dr~dv 
\leq C_H~|H-\tfrac{1}{2}|^2~(t-s)^{4H}~( (\log(t-s))^2+1). 
\end{equation}
\end{proposition}

\begin{proof}
In view of~\eqref{eq:K_H-simplifiee-H-petit},
\eqref{eq:deriv-K_H-simplifiee-H-petit} and~\eqref{def:I(r,v)} one has
$$ |\mathcal{A}(v,r,t)| \leq C~|H-\tfrac{1}{2}|~I(v,r,t)
+C~|H-\tfrac{1}{2}|^2~\int_r^t (\theta-v)^{H-\frac{3}{2}}
~r^{\frac{1}{2}-H}~I(0,r,\theta)~d\theta. $$
First, we use~\eqref{ineq:maj-I(r,v)} to bound~$I(v,r,t)$ from above. 
Second, we notice that~\eqref{ineq:maj-I(r,v)} implies 
$$ I(0,r,\theta) 
\leq C~r^{2H-1}~\left(\log(\frac{t}{r}) + 1 \right)
\leq C~r^{2H-1}~\left(\log(\frac{t-v}{r-v}) + 1 \right), $$
from which
\begin{equation*}
\begin{split}
|H-\tfrac{1}{2}|^2~\int_r^t (\theta-v)^{H-\frac{3}{2}}
~r^{\frac{1}{2}-H}~I(0,r,\theta)~d\theta 
&\leq 
C~|H-\tfrac{1}{2}|~r^{H-\frac{1}{2}}~\left(\log(\frac{t-v}{r-v})+1\right)
~(r-v)^{H-\frac{1}{2}} \\
&\leq 
C~|H-\tfrac{1}{2}|~(r-v)^{2H-1}~\left(\log(\frac{t-v}{r-v})+1\right).
\end{split}
\end{equation*}
We thus have
\begin{equation*} 
\mathcal{A}(v,r,t) \leq C~|H-\tfrac{1}{2}|~(r-v)^{2H-1}
~\left( \log(\frac{t-v}{r-v}) + 1 \right).
\end{equation*}
The inequality~\eqref{ineq:L2-maj-A(r,v)} then results from
the inequality~\eqref{ineq:L2-maj-I(r,v)} with $\gamma=4H-2$.
\end{proof}

\subsection{Estimate on~$\int_s^t 
\int_0^v (\mathcal{A}^\sharp(v,r,t))^2~dr~dv$}

\begin{proposition}
For $0<r<v<t$ let $\mathcal{A}^\sharp(v,r,t)$ be defined as 
in~\eqref{def:A(r,v)}.
\begin{enumerate}[label=(\roman*)]
\item In the regular case $\frac{1}{2}<H<1$ it holds that
\begin{equation} \label{ineq:L2-maj-Adiese(r,v)-H-grand}
\int_s^t \int_0^v (\mathcal{A}^\sharp(v,r,t))^2~dr~dv 
\leq C_H~|H-\tfrac{1}{2}|^2~(t-s)^{2H+1}~t^{4H-1}.
\end{equation}
\item In the irregular case $\frac{1}{4}<H<\frac{1}{2}$ it holds that
\begin{equation} \label{ineq:L2-maj-Adiese(r,v)-H-petit}
\int_s^t \int_0^v (\mathcal{A}^\sharp(v,r,t))^2~dr~dv 
\leq C_H~|H-\tfrac{1}{2}|^2~(t-s)^{4H}~t^{2H}.
\end{equation}
\end{enumerate}
\end{proposition}

\begin{proof}
Notice that
\begin{equation*}
\begin{split}
(\mathcal{A}^\sharp(v,r,t))^2 &= 2 \int_v^t 
\mathcal{A}^\sharp(v,r,\theta)
\Big| \partial_\theta K_H(\theta,v) \Big|
~K_H(\theta,r)~(\theta-v)^H~d\theta \\
&= 2 \int_v^t \int_v^{\theta}
\Big| \partial_\alpha K_H(\alpha,v) \Big|
~K_H(\alpha,r)~(\alpha-v)^H~d\alpha
~\Big| \partial_\theta K_H(\theta,v) \Big|
~K_H(\theta,r)~(\theta-v)^H~d\theta. 
\end{split}
\end{equation*}

Now, in view of~\eqref{eq-RH-KH}, for any 
$v<\alpha<\theta<t$ we have
\begin{equation} \label{ineq:int_KH-KH}
\int_0^v K_H(\alpha,r)~K_H(\theta,r)~dr
\leq \int_0^t K_H(\alpha,r)~K_H(\theta,r)~dr
= \tfrac{1}{2}~(\alpha^{2H} + \theta^{2H} - (\theta-\alpha)^{2H})
\leq t^{2H}.
\end{equation}
We deduce:
\begin{align} \label{ineq:int-0-v-A-diese-2-dr}
\int_0^v(\mathcal{A}^\sharp(v,r,t))^2~dr
&\leq 2~t^{2H}~\int_v^t \int_v^{\theta}
|\partial_\alpha K_H(\alpha,v)|
~(\alpha-v)^H~d\alpha~|\partial_\theta K_H(\theta,v)|
~(\theta-v)^H~d\theta \nonumber \\
&= t^{2H}~\Big(\int_v^t |\partial_\theta K_H(\theta,v)| 
~(\theta-v)^H~d\theta \Big)^2.
\end{align}

\textbf{In the regular case $\frac{1}{2}<H<1$} we therefore
can use~\eqref{eq:deriv-K_H-simplifiee-H-grand} to get
$$ \int_0^v(\mathcal{A}^\sharp(v,r,t))^2~dr 
\leq C_H~|H-\tfrac{1}{2}|^2~(t-v)^{4H-1}~\frac{t^{4H-1}}{v^{2H-1}}
\leq C_H~|H-\tfrac{1}{2}|^2~(t-s)^{4H-1}~\frac{t^{4H-1}}{v^{2H-1}}, $$
from which
$$ \int_s^t\int_0^v(\mathcal{A}^\sharp(v,r,t))^2~dr~dv
\leq C_H~|H-\tfrac{1}{2}|^2~(t-s)^{4H-1}~(t^{2-2H}-s^{2-2H})~t^{4H-1}. 
$$
It then remains to use $t^{2-2H}-s^{2-2H} \leq (t-s)^{2-2H}$ to 
obtain~\eqref{ineq:L2-maj-Adiese(r,v)-H-grand}.

\vspace{0.3cm}
\textbf{In the irregular case $\frac{1}{4}<H<\frac{1}{2}$} we 
can use~\eqref{eq:deriv-K_H-simplifiee-H-petit} to get
$$ \int_0^v(\mathcal{A}^\sharp(v,r,t))^2~dr 
\leq C_H~|H-\tfrac{1}{2}|^2~(t-v)^{4H-1}~t^{2H} 
\leq C_H~|H-\tfrac{1}{2}|^2~(t-s)^{4H-1}~t^{2H}, $$
from which~\eqref{ineq:L2-maj-Adiese(r,v)-H-petit} follows.
\end{proof}

\subsection{Estimate on~$\int_s^t 
\int_0^v (\mathcal{A}^\flat(v,r,t))^2~dr~dv$}

\begin{proposition}
For $0<r<v<t$ let $\mathcal{A}^\flat(v,r,t)$ be defined as 
in~\eqref{def:A(r,v)}.

\begin{enumerate}[label=(\roman*)]
\item In the regular case $\frac{1}{2}<H<1$ it holds that
\begin{equation} \label{ineq:L2-maj-mathcal-Abemol(r,v)-H-grand}
\int_s^t \int_0^v (\mathcal{A}^\flat(v,r,t))^2~dr~dv 
\leq C_H~|H-\tfrac{1}{2}|^2~(t-s)^{2H+1}~t^{2H-1}.
\end{equation}
\item In the irregular case $\frac{1}{4}<H<\frac{1}{2}$ it holds that
\begin{equation} \label{ineq:L2-maj-mathcal-Abemol(r,v)-H-petit}
\int_s^t \int_0^v (\mathcal{A}^\flat(v,r,t))^2~dr~dv 
\leq C~|H-\tfrac{1}{2}|^2~(t-s)^{4H}.
\end{equation}
\end{enumerate}
\end{proposition}

\begin{proof}
Notice that
\begin{equation*}
\begin{split}
(\mathcal{A}^\flat(v,r,t))^2 
&= 2 \int_v^t \mathcal{A}^\flat(v,r,\theta)
\Big| \partial_\theta K_H(\theta,v) \Big|
~|K_H(\theta,r) - K_H(v,r)|~d\theta \\
&\leq 2 \int_v^t \int_v^{\theta}
\Big| \partial_\alpha K_H(\alpha,v) \Big| 
~|K_H(\alpha,r) - K_H(v,r)|~d\alpha
~\Big|  \partial_\theta K_H(\theta,v) \Big| 
|K_H(\theta,r)-K_H(v,r)|~d\theta. 
\end{split}
\end{equation*}
Now, in view of~\eqref{eq:norme-L2-K_H}, for 
any $v<\alpha<\theta<t$ we have
\begin{equation*}
\begin{split}
\int_0^v |(K_H(\alpha,r) - K_H(v,r))~& (K_H(\theta,r)-K_H(v,r))|~dr \\
&\leq \Big(\int_0^t (K_H(\alpha,r) - K_H(v,r))^2~dr \Big)^{\tfrac{1}{2}}
~\Big(\int_0^t (K_H(\theta,r)-K_H(v,r))^2~dr\Big)^{\tfrac{1}{2}} \\
&\leq (\alpha - v)^H~(\theta - v)^H.
\end{split}
\end{equation*}
We deduce:
\begin{equation*}
\begin{split}
\int_0^v(\mathcal{A}^\flat(v,r,t))^2~dr
&\leq 2~\int_v^t \int_v^{\theta}
\Big|\partial_\alpha K_H(\alpha,v)\Big|
~(\alpha-v)^H~d\alpha~
\Big|\partial_\theta K_H(\theta,v)\Big|
~(\theta-v)^H~d\theta \\
&= \Big(\int_v^t
\Big| \partial_\theta K_H(\theta,v) \Big|
~(\theta-v)^H~d\theta \Big)^2.
\end{split}
\end{equation*}
In view of~\eqref{ineq:int-0-v-A-diese-2-dr} we deduce the desired 
inequalities by dividing the right-hand side 
of~\eqref{ineq:L2-maj-Adiese(r,v)-H-grand}
and~\eqref{ineq:L2-maj-Adiese(r,v)-H-petit} by $t^{2H}$.
\end{proof}

\subsection{Estimate on~$\int_0^s 
\int_0^t (\mathcal{A}^\natural(v,r,t))^2~dr~dv$}

\begin{proposition}
For $v$ and $r$ in $(0,t)$ let $\mathcal{A}^\natural(v,r,t)$ be defined 
as in~\eqref{def:A(r,v)}.

For any $\frac{1}{4}<H<1$ it holds that
\begin{equation} \label{ineq:L2-maj-mathcal-Abecarre(r,v)}
\int_0^s \int_0^t (\mathcal{A}^\natural(v,r,t))^2~dr~dv 
\leq C_H~|H-\tfrac{1}{2}|^2~(t-s)^{2H}~t^{2H}.
\end{equation}
\end{proposition}

\begin{proof}
Notice that
\begin{equation*}
\begin{split}
(\mathcal{A}^\natural(v,r,t))^2 
&= 2 \int_s^t \mathcal{A}^\natural(v,r,\theta)
\Big| \partial_\theta K_H(\theta,v) \Big|
~K_H(\theta,r)~d\theta \\
&= 2 \int_s^t \int_s^\theta
\Big| \partial_\alpha K_H(\alpha,v) \Big| 
~K_H(\alpha,r)~d\alpha
~\Big|  \partial_\theta K_H(\theta,v) \Big| 
K_H(\theta,r)~d\theta. 
\end{split}
\end{equation*}
We again use~\eqref{ineq:int_KH-KH} and~\eqref{eq:deriv-K_H}
 to get
\begin{equation*}
\begin{split}
\int_0^t(\mathcal{A}^\natural(v,r,t))^2~dr
&\leq 2~t^{2H}\int_s^t \int_s^\theta
\Big|\partial_\alpha K_H(\alpha,v)\Big|~d\alpha~
\Big|\partial_\theta K_H(\theta,v)\Big|~d\theta \\
&\leq 4~\chi_H^2~t^{2H}~(H-\tfrac{1}{2})^2 \int_s^t \int_s^\theta
(\theta\alpha)^{H-\frac{1}{2}}~v^{1-2H}~(\theta-v)^{H-\frac{3}{2}}
~(\alpha-v)^{H-\frac{3}{2}}~d\alpha~d\theta.
\end{split}
\end{equation*}
By changing the variable $v$ into $z=1-\frac{\theta}{\alpha}
\cdot \frac{\alpha-v}{\theta-v}$ one gets
$$ \int_0^s 
v^{1-2H}~(\theta-v)^{H-\frac{3}{2}}~(\alpha-v)^{H-\frac{3}{2}}~dv 
= (\alpha~\theta)^{\frac{1}{2}-H} (\theta-\alpha)^{2H-2} 
\int_0^{\frac{s}{\alpha} \cdot \frac{\theta-\alpha}{\theta-s}} 
z^{1-2H}~(1-z)^{H-\frac{3}{2}}~dz. $$
Therefore,
$$ \int_0^s\int_0^t(\mathcal{A}^\natural(v,r,t))^2~dr~dv
\leq C~(H-\tfrac{1}{2})^2~t^{2H} \int_s^t \int_s^\theta
(\theta-\alpha)^{2H-2} 
\int_0^{\frac{s}{\alpha} \cdot \frac{\theta-\alpha}{\theta-s}} 
z^{1-2H}~(1-z)^{H-\frac{3}{2}}~dz~d\alpha~d\theta. $$
We now combine the inequality 
$\frac{s}{\alpha} \cdot \frac{\theta-\alpha}{\theta-s}
\leq \frac{\theta-\alpha}{\theta-s}$ with the change of variables $x = 
\frac{\theta-\alpha}{\theta-s}$ to get
$$ \int_0^s\int_0^t(\mathcal{A}^\natural(v,r,t))^2~dr~dv
\leq C (H-\tfrac{1}{2})^2~t^{2H} \int_s^t (\theta-s)^{2H-1} \int_0^1 
x^{2H-2} \int_0^x z^{1-2H} (1-z)^{H-\frac{3}{2}}~dz~dx~d\theta. $$

To end the proof of Inequality~\eqref{ineq:L2-maj-mathcal-Abecarre(r,v)}
it remains to prove that
\begin{equation}\label{eq:proofAbec}
\int_0^1 x^{2H-2} \int_0^x z^{1-2H} 
(1-z)^{H-\frac{3}{2}}~dz~dx  \leq C_H.
\end{equation}
We have:
\begin{align*}
\int_0^1 x^{2H-2} \int_0^x z^{1-2H} (1-z)^{H-\frac{3}{2}}\, 
dz\, dx   &= \int_0^{\frac{1}{2}} x^{2H-2} \int_0^x
z^{1-2H} (1-z)^{H-\frac{3}{2}}~dz~dx  \\
&\quad + \int_{\frac{1}{2}}^1 x^{2H-2} \int_0^x z^{1-2H} 
(1-z)^{H-\frac{3}{2}}~dz~dx \\
&=: I_{1} + I_{2}.
\end{align*}
We have $I_1 \leq C_H$ since for any $x\leq \frac{1}{2}$, 
$$ \int_0^x z^{1-2H}~(1-z)^{H-\frac{3}{2}}~dz \leq 
C\int_0^x z^{1-2H}~dz = C_H~x^{2-2H}. $$
We now turn to $I_2$ which we split into the sum of
$$ I_{21} := \int_{\frac{1}{2}}^1 x^{2H-2} \int_0^{\frac{1}{2}} 
z^{1-2H}~(1-z)^{H-\frac{3}{2}}~dz~dx~~~~~\text{and}~~~~~
I_{22} := \int_{\frac{1}{2}}^1 x^{2H-2} \int_{\frac{1}{2}}^x 
z^{1-2H}~(1-z)^{H-\frac{3}{2}}~dz~dx. $$
On the one hand, the bound on $I_1$ leads to
$$ I_{21} \leq 2^{2-2H}~C_H. $$
On the other hand, we have
$$ I_{22} \leq 2^{2-2H} \int_{\frac{1}{2}}^1 
\int_{\frac{1}{2}}^x z^{1-2H}~(1-z)^{H-\frac{3}{2}}~dz~dx 
\leq 2^{2-2H}~(2^{2H-1}\vee 1)~
\int_{\frac{1}{2}}^1 (1-x)^{-\frac{1}{2}}
\int_{\frac{1}{2}}^x (1-z)^{H-1}~dz~dx \leq C. $$
We thus have obtained~\eqref{eq:proofAbec}.
\end{proof}

\subsection{On variants of $I(v,r,t)$: 
$\mathcal{I}(v,r,t)$, $\mathcal{I}^\sharp(v,r,t)$
and $\mathcal{I}^\natural(v,r,t)$}

\begin{proposition} \label{lemma:maj-I-reg(v,r,t)}
Let $\mathcal{I}(v,r,t)$ be  defined for~$0<v<r<t$ by
\begin{equation} \label{def:I-reg(v,r,t)}
\mathcal{I}(v,r,t) := \int_r^t 
(\theta-v)^{H-\frac{3}{2}}~(\theta-r)^{H+\frac{1}{2}}~d\theta.
\end{equation} 
For any $\tfrac{1}{4}<H<\tfrac{1}{2}$ one has
\begin{equation} \label{ineq:L2-maj-I-reg(v,r,t)}
\int_s^t \int_v^t (\mathcal{I}(v,r,t))^2~dr~dv \leq C~(t-s)^{4H+2}.
\end{equation}
\end{proposition}

\begin{proof}
As above, use the change of variable $\theta = r+\frac{r-v}{\alpha}$. 
It comes:
$$ \mathcal{I}(v,r,t) = (r-v)^{2H}~\int_{\frac{r-v}{t-r}}^\infty 
(1+\alpha)^{H-\frac{3}{2}}~\alpha^{-2H-1}~d\alpha. $$
 Notice that
$$ \forall z>0,~~\int_z^\infty 
(1+\alpha)^{H-\frac{3}{2}}~\alpha^{-2H-1}~d\alpha
\leq \int_z^\infty \alpha^{-2H-1}~d\alpha \leq \frac{C}{z^{2H}}. $$
For $z=\frac{r-v}{t-r}\wedge 1$ one has
$\frac{1}{z^{2H}} \leq \max(1,\frac{(t-r)^{2H}}{(r-v)^{2H}})$.
It follows that
\begin{equation*} 
\mathcal{I}(v,r,t) \leq C~((r-v)^{2H}+ (t-r)^{2H}).
\end{equation*}

The inequality~\eqref{ineq:L2-maj-I-reg(v,r,t)} then results from
$$ \int_s^t \int_v^t ((r-v)^{4H} + (t-r)^{4H})~dr~dv
\leq C~\int_s^t (t-v)^{4H+1}~dv. $$
\end{proof}

\begin{proposition} \label{lemma:maj-I-diese-reg(v,r,t)}
Let $\mathcal{I}^\sharp(v,r,t)$ be  defined for~$0<r<v<t$ by
\begin{equation} \label{def:I-diese(v,r,t)}
\mathcal{I}^\sharp(v,r,t) := \int_v^t 
(\theta-v)^{2H-\frac{3}{2}}~(\theta-r)^{H+\frac{1}{2}}~d\theta.
\end{equation} 

For any $\tfrac{1}{4}<H<\tfrac{1}{2}$ one has
\begin{equation} \label{ineq:L2-maj-Idiese(v,r,t)}
\int_s^t \int_0^v (\mathcal{I}^\sharp(v,r,t))^2~dr~dv 
\leq C_H~(t-s)^{4H}~t^{2H+2}.
\end{equation}
\end{proposition}

\begin{proof}
$$ \int_v^t 
(\theta-v)^{2H-\frac{3}{2}}~(\theta-r)^{H+\frac{1}{2}}~d\theta
\leq C_H~(t-r)^{H+\frac{1}{2}}~(t-v)^{2H-\frac{1}{2}}
\leq C_H~t^{H+\frac{1}{2}}~(t-s)^{2H-\frac{1}{2}}. $$
\end{proof}

\begin{proposition}
Let $\mathcal{I}^\natural(v,r,t)$ be  defined for~$0<v<s<t$ by
\begin{equation} \label{def:I-natural(v,r,t)}
\mathcal{I}^\natural(v,r,t) := \int_s^t 
\Big| \partial_\theta K_H(\theta,v) \Big|
~(\theta-r)^{H+\frac{1}{2}}~d\theta.
\end{equation} 

For any $\tfrac{1}{4}<H<\tfrac{1}{2}$ one has
\begin{equation} \label{ineq:L2-maj-Inatural(v,r,t)}
\int_0^s \int_0^t (\mathcal{I}^\natural(v,r,t))^2~dr~dv 
\leq (t-s)^{2H}~t^{2H+1}.
\end{equation}
\end{proposition}

\begin{proof}
We again notice that the map~$\partial_\theta K_H(\theta,v)$ is either 
positive or negative
(see~\eqref{eq:deriv-K_H-simplifiee-H-petit} and 
\eqref{eq:deriv-K_H-simplifiee-H-grand}).
Therefore,
$$ \int_0^t(\mathcal{I}^\natural(v,r,t))^2~dr
\leq t^{2H+1}~\Big(\int_s^t
\partial_\theta K_H(\theta,v)~d\theta \Big)^2
= t^{2H+1}~(K_H(t,v) - K_H(s,v))^2. $$
By again using~\eqref{eq:norme-L2-K_H} 
we deduce:
$$ \int_0^s \int_0^t (\mathcal{I}^\natural(v,r,t))^2~dr~dv 
\leq \int_0^t \int_0^t (\mathcal{I}^\natural(v,r,t))^2~dr~dv 
\leq (t-s)^{2H}~t^{2H+1}. $$
\end{proof}

\section{Bounds on $\wl$ and its derivatives (Proof of Proposition~\ref{prop:boundW}) }
\label{app:boundW}

The aim of this section is to show the following proposition.

\begin{customprop}{\ref{prop:boundW}}
For any $\lambda>0$, let $\wl(y)$ be defined as 
in~\eqref{eq:ODE_Y_ext}. Under the assumptions \ref{hyp:h1'} and 
\ref{hyp:h2'} on 
$b$ 
and $\sigma$ one has
\begin{equation} \label{maj:wl-old}
\forall y\in\R,~0\leq \wl(y) 
\leq e^{-|\mathbb{Y}-y|~\mathcal{R}(\lambda)},
\end{equation}
where $\mathcal{R}(\lambda)$ is defined as in~\eqref{eq:def_muRST}:
$\mathcal{R}(\lambda):=\sqrt{2\lambda +\mu^2} - \mu$.

In addition, the two first derivatives of $\wl$ satisfy the following 
estimates: There exists $C>0$ depending on $\mu$ only 
such that, for all real numbers $y$ and $\tilde{y}$,
\begin{gather}
|\wl'(y)| \leq C(1 + \lambda)~e^{-|\mathbb{Y}-y|~\mathcal{R}(\lambda)}, 
\label{maj:wl-d1-old} \\
|\wl''(y)| \leq C(1+\lambda) 
~e^{-|\mathbb{Y}-y|~\mathcal{R}(\lambda)} , \label{maj:wl-d2-old} \\
|\wl''(y) - \wl''(\tilde{y})| \leq C~(1+\lambda)^2~|y-\tilde{y}|~
\left( e^{-|\mathbb{Y}-y|~\mathcal{R}(\lambda)} + 
e^{-|\mathbb{Y}-\tilde{y}|~\mathcal{R}(\lambda)} \right).
\label{maj:diff-wl-d2-old}
\end{gather}
\end{customprop}

\begin{proof}
We successively consider $y<\mathbb{Y}$ and $y\geq \mathbb{Y}$.

\underline{The case $y<\mathbb{Y}$.} 

Let the Lamperti process $\mathbf{Y}$ be defined as 
in~\eqref{eq:fSDE_Lamperti-BM}.
Let $\mathbf{Y}^\uparrow$ be defined as: $\mathbf{Y}_t^\uparrow = y+ 
\mathbf{B}_t + \mu t$ where,
as above, $\mu := |\widetilde{b}|_\infty$.
Denote by $\tau^\uparrow_\mathbf{Y}$ the first time 
$\mathbf{Y}^\uparrow$ hits 
$\mathbb{Y}$. As $\mathbf{Y}_t \leq \mathbf{Y}_t^\uparrow$ a.s. 
for every~$t\geq 0$ one has
$\tau_\mathbf{Y}^\uparrow \leq \tau_\mathbf{Y}$~a.s., from which
\begin{equation*}
\EE\left(e^{-\lambda \tau_\mathbf{Y}}\right) 
\leq \EE\left(e^{-\lambda \tau_\mathbf{Y}^\uparrow}\right) 
= e^{\mu(\mathbb{Y}-y)- (\mathbb{Y}-y)\sqrt{2\lambda + \mu^2}},
\end{equation*} 
where the last equality can be found in e.g.~\cite{Borodin}. 
The inequality~\eqref{maj:wl-old} follows.

Let us now prove the estimate on $\wl'$.
We use a trick provided to us by P-E.~Jabin.
In view of~(\ref{eq:ODE_Y}) we have 
\begin{equation}\label{eq:wprime}
\forall \tilde{y}\leq y\leq\mathbb{Y},~~\wl'(y) = \wl'(\tilde{y})
-2\int_{\tilde{y}}^y \widetilde{b}(z) \wl'(z)\ dz + 2\lambda 
\int_{\tilde{y}}^y \wl(z)\ dz.
\end{equation}
Integrate w.r.t.~$\tilde{y}$ between $y-1$ and $y$ to obtain
\begin{equation*}
\wl'(y) = \wl(y) - \wl(y-1) + \int_{y-1}^y
\left(-2\int_{\tilde{y}}^y \widetilde{b}(z) \wl'(z)\ dz 
+ 2\lambda \int_{\tilde{y}}^y  \wl(z)\ dz\right)~d\tilde{y}.
\end{equation*}
From~\eqref{eq:ODE_Y_ext} it results that the function~$\wl$ is 
positive and increasing on the 
interval $(-\infty,\mathbb{Y})$. Consequently,
\begin{align*}
0\leq \wl'(y) &\leq \wl(y)
+ 2\mu \int_{y-1}^y \int_{\tilde{y}}^y \wl'(z)\ dz ~d\tilde{y}
+ 2\lambda \int_{y-1}^y\int_{\tilde{y}}^y 
\wl(z)~dz~d\tilde{y} \\
&\leq C(1+\lambda)~\wl(y).
\end{align*}
The desired inequality~\eqref{maj:wl-d1-old} follows 
from~\eqref{maj:wl-old}.

The inequality~\eqref{maj:wl-d2-old} follows from~\eqref{maj:wl-old}, 
\eqref{maj:wl-d1-old}, and the differential equation~(\ref{eq:ODE_Y}).

Finally, to get~\eqref{maj:diff-wl-d2-old} we start 
from~(\ref{eq:ODE_Y}):
\begin{equation*}
\wl''(y) - \wl''(\tilde{y}) = 2\lambda (\wl(y)-\wl(\tilde{y})) - 
(\widetilde{b}(y)-\widetilde{b}(\tilde{y}))~\wl'(y) 
- \widetilde{b}(\tilde{y})(\wl'(y)-\wl'(\tilde{y})).
\end{equation*}
First, for any $\tilde{y}<y$, in view of~\eqref{maj:wl-d1-old} we have
$$ \wl(y) - \wl(\tilde{y}) \leq C~(1+\lambda)~(y-\tilde{y})~
e^{-(\mathbb{Y}-y)~\mathcal{R}(\lambda)}. $$
Second, from~\eqref{maj:wl-d1-old} we deduce that 
$$ |(\widetilde{b}(y)-\widetilde{b}(\tilde{y}))~\wl'(y)| 
\leq 
C~(1+\lambda)~(y-\tilde{y})~e^{-(\mathbb{Y}-y)~\mathcal{R}(\lambda)}. $$
Finally, again use that $\wl'$ is positive
and satisfies~\eqref{eq:wprime} to get
\begin{align*}
|\wl'(\tilde{y}) - \wl'(y)| &\leq C \int_{\tilde{y}}^y \wl'(z) dz + 
2\lambda \int_{\tilde{y}}^y \wl(z) dz\\
&\leq C(\wl(y)-\wl(\tilde{y})) + 2\lambda(y-\tilde{y}) 
e^{-(\mathbb{Y}-y)~\mathcal{R}(\lambda)}\\
&\leq C~(1+\lambda)~(y-\tilde{y})~
e^{-(\mathbb{Y}-y)~\mathcal{R}(\lambda)}.
\end{align*}
It then remains to exchange the roles of $y$ and $\tilde{y}$ 
to obtain~\eqref{maj:diff-wl-d2-old}.

\vspace{0.3cm}
\underline{The case $y\geq\mathbb{Y}$.} 

In that case, we have that 
$\mathbb{Y}-(2\mathbb{Y}-y)=|\mathbb{Y}-y|$.
The desired 
estimates follow from the definition of $\wl$ on the interval 
$(\mathbb{Y},+\infty)$ (see~\eqref{eq:ODE_Y_ext}) and the calculations 
for 
the case $y<\mathbb{Y}$ which imply that $|\wl'(\mathbb{Y})| \leq 
C(1+\lambda)$ 
and $|\wl''(\mathbb{Y})| \leq C(1+\lambda)$.
\end{proof}

\section{Proof of Proposition~\ref{prop:moments}} \label{App:moments}

The proof of Proposition~\ref{prop:moments} relies on the following 
elementary lemma.

\begin{lemma}
\begin{enumerate}[label=(\roman*)]
Set $m:=\mathbb{Y}-y_0$ and $\mu:=|\widetilde{b}|_\infty$. Let $q>0$.
\item 
Let $Y^{H\uparrow}$ be the process defined as
\begin{equation} \label{def:YH+}
Y^{H\uparrow}_t = y_0 + \mu\,t + B^H_t.
\end{equation}
One has
\begin{equation} \label{esp-W_lambda-Y+}
\EE\Big(e^{-q(\mathbb{Y}-Y^{H\uparrow}_s)}
~\indi{Y^{H\uparrow}_s\leq\mathbb{Y}}\Big)
\leq C~\exp\Big(-\tfrac{1}{2}\frac{(m-\mu s)^2}{s^{2H}}
~\indi{\frac{m-\mu s}{s^{2H}} \leq q}
- \tfrac{1}{2}q(m-\mu s)~\indi{\frac{m-\mu s}{s^{2H}} > q}\Big).
\end{equation}
\item Let~$G$ be any standard Gaussian random variable. One has
\begin{equation} \label{ineq:moment-Wlambda-YH-App}
\begin{split}
\EE\,e^{-q|\mathbb{Y}-Y^H_s|}
&\leq
C~\exp\left(-\frac{1}{2}\frac{(m-\mu s)^2}{s^{2H}}
~\indi{\frac{m-\mu s}{s^{2H}} \leq q}
- \frac{q}{2}(m-\mu s)~\indi{\frac{m-\mu s}{s^{2H}} > q}\right) \\
&~~~+ C~\PP(G \geq \frac{m-\mu s}{s^H}).
\end{split}
\end{equation}
\end{enumerate}
\end{lemma}

\begin{proof}
We start with proving~\eqref{esp-W_lambda-Y+}.

Define the decreasing function $f$ on $\R_+$ by
$$ f(q) := \EE\left(e^{-q(\mathbb{Y}-Y^{H\uparrow}_s)}
~\indi{Y^{H\uparrow}_s\leq\mathbb{Y}}\right)
= \EE\left(\exp(-q(m-B^H_s-\mu s))~\indi{B^H_s+\mu s\leq m} \right). $$
Notice that
$$ \int^{\frac{m-\mu s}{s^H}}_{-\infty} e^{q s^H y-\frac{y^2}{2}}~dy
= e^{\frac{1}{2}q^2 s^{2H}}~
\int^{\frac{m-\mu s}{s^H}}_{-\infty} e^{-\frac{1}{2}(y-q s^H)^2}~dy
= e^{\frac{1}{2}q^2 s^{2H}}~
\int^{\frac{m-\mu s}{s^H}-q s^H}_{-\infty} e^{-\frac{z^2}{2}}~dz. $$
Therefore,
$$ f(q) = \frac{1}{\sqrt{2\pi}}\exp(-q(m-\mu s)+\tfrac{1}{2}q^2 s^{2H})
~\int^{\frac{m-\mu s}{s^H}-q s^H}_{-\infty} e^{-\frac{z^2}{2}}~dz. $$

\paragraph*{When $\frac{m-\mu s}{s^{2H}} \leq q$:}

As~$f$ is decreasing, one has
$$ f(q) \leq f\left(\frac{m-\mu s}{s^{2H}} \right)
= C~\exp\left(-\frac{(m-\mu s)^2}{s^{2H}}
+\tfrac{1}{2}\frac{(m-\mu s)^2}{s^{2H}}\right) = 
C~\exp\left(-\tfrac{1}{2}\frac{(m-\mu s)^2}{s^{2H}}\right). $$

\paragraph*{When $\frac{m-\mu s}{s^{2H}} > q$:}
One then has
$$ f(q) \leq \exp(-q(m-\mu s)+\tfrac{1}{2}q(m-\mu s))
= \exp\left(-\tfrac{1}{2}q(m-\mu s)\right). $$

We therefore have obtained~\eqref{esp-W_lambda-Y+}.

\vspace{0.3cm}

We now turn to~\eqref{ineq:moment-Wlambda-YH-App}. Observe that
\begin{equation*}
\begin{split}
\EE\,e^{-q|\mathbb{Y}-Y^H_s|} &=
\EE\left(e^{-q(\mathbb{Y}-Y^H_s)}~\indi{Y^{H\uparrow}_s\leq\mathbb{Y}}\right)
+ \EE\left(e^{-q|\mathbb{Y}-Y^H_s|}
~\indi{Y^{H\uparrow}_s\geq\mathbb{Y}}\right) \\
&\leq 
\EE\left(e^{-q(\mathbb{Y}-Y^{H\uparrow}_s)}~
\indi{Y^{H\uparrow}_s\leq\mathbb{Y}}\right)
+ \PP(Y^{H\uparrow}_s\geq\mathbb{Y}).
\end{split}
\end{equation*}
Letting $G$ be defined as in the 
statement of the proposition we thus have
$$ \EE\,e^{-q|\mathbb{Y}-Y^H_s|} \leq 
\EE\left(e^{-q(\mathbb{Y}-Y^{H\uparrow}_s)}~
\indi{Y^{H\uparrow}_s\leq\mathbb{Y}}\right) 
+ \PP\left(G \geq \frac{m-\mu s}{s^H}\right). $$
It then remains to use~\eqref{esp-W_lambda-Y+}.
\end{proof}

We now are in a position to prove Proposition~\ref{prop:moments} that 
we recall here.

\begin{customprop}{\ref{prop:moments}}
Let $\lambda >|\widetilde{b}'|_\infty$. Let $m:=\mathbb{Y}-y_0$,
$\mu:=|\widetilde{b}|_\infty$, $q:=p\mathcal{R}(\lambda)$
and 
$\widetilde{\lambda}:= \lambda - |\widetilde{b}'|_\infty$. 

One has
\begin{equation} \label{ineq:moment-Wlambda-YH-integ-App}
\rate_p(\mathbb{Y}-y_0,\lambda)
\leq C~\Big( e^{-\frac{q}{2}m}
+ e^{-\frac{\widetilde{\lambda}}{2} \Psi^H_q(m)} 
+\exp\big(-2^{-\frac{8}{3}}~m^{\frac{2}{1+2H}}~
\widetilde{\lambda}^{\frac{2H}{1+2H}}\big)
+ \exp\big(-\widetilde{\lambda} \tfrac{m}{2\mu} \big) \Big),
\end{equation}
where
\begin{equation} \label{def-App:Psi-H-App}
\Psi^H_q(m) := \frac{m}{\mu+q}
~\indi{\left[\left(\frac{m}{\mu+q}\right)^{2H-1} < 1\right]}
+ \left(\frac{m}{\mu+q}\right)^{\frac{1}{2H}}
~\indi{\left[\left(\frac{m}{\mu+q}\right)^{2H-1} \geq 1\right]}.
\end{equation}
\end{customprop}

\begin{proof}
In view of~\eqref{ineq:moment-Wlambda-YH-App} we have
$$ \sup_{s\in\R_+} e^{-\widetilde{\lambda}s} 
~\EE\,e^{-q|\mathbb{Y}-Y^H_s|}
\leq J_1(\widetilde{\lambda}) + J_2(\widetilde{\lambda}), $$
where
$$ J_1(\widetilde{\lambda}) :=
C~\sup_{s\in\R_+} e^{-\widetilde{\lambda}s} 
\exp\left(-\frac{1}{2}\frac{(m-\mu s)^2}{s^{2H}}
~\indi{\frac{m-\mu s}{s^{2H}} \leq q}
- \frac{q}{2}(m-\mu s)~\indi{\frac{m-\mu s}{s^{2H}} > q}\right) $$
and
$$ J_2(\widetilde{\lambda}) :=
C~\sup_{s\in\R_+} e^{-\widetilde{\lambda}s} 
~\PP(G \geq \frac{m-\mu s}{s^H}). $$

We start with estimating $J_1(\widetilde{\lambda})$. Observe that the 
map
$$ \phi(s) := q s^{2H} + \mu s - m $$ 
is increasing, which implies that there exists a 
unique $s_\ast$ such that $\phi(s_\ast)=0$, that is, such that
$$ \left[\frac{m-\mu s}{s^{2H}} \leq q\right] 
\Longleftrightarrow s\geq s_\ast. $$
It comes:
\begin{equation} \label{maj-I11-intermed}
J_1(\widetilde{\lambda}) \leq C
\sup_{s\in [0,s_\ast]} e^{-\widetilde{\lambda} s-\frac{1}{2}q(m-\mu s)}
+ C\sup_{s\in(s_\ast,\infty)} e^{-\widetilde{\lambda} s}.
\end{equation}
In order to bound the preceding expression from above we bound 
$s_\ast$ from above and below as follows.

\vspace{0.3cm}

\textbf{An upper bound for $s_\ast$.}
Noticing that $\phi(\frac{m}{\mu})>0$ we get $s_\ast<\frac{m}{\mu}$. 

\vspace{0.3cm}

\textbf{A lower bound for $s_\ast$.} We aim to get a~$s$ such that 
~$\phi(s) \leq 0$. We distinguish two cases: 
\begin{itemize}
\item If $\left(\frac{m}{\mu+q}\right)^{2H-1} < 1$ one has
$$ \phi\left(\frac{m}{\mu+q}\right) \leq q\frac{m}{\mu+q}
+\mu\frac{m}{\mu+q}-m = 0. $$
\item If $\left(\frac{m}{\mu+q}\right)^{2H-1} \geq 1$ one has
$$ \phi\left(\left(\frac{m}{\mu+q}\right)^{\frac{1}{2H}}\right) 
= q\frac{m}{\mu+q} + \mu\left(\frac{m}{\mu+q}\right)^{\frac{1}{2H}}
-m \leq q\frac{m}{\mu+q}+\mu\frac{m}{\mu+q}-m = 0. $$
\end{itemize}
To summarize,
$$ s_\ast \geq \Psi^H_q(m), $$
where $\Psi^H_q(m)$ is defined as in~\eqref{def-App:Psi-H-App}.

We now come back to~\eqref{maj-I11-intermed} and observe that
\begin{equation*}
\begin{split}
\sup_{s\in[0,s_\ast]} e^{-\widetilde{\lambda} s-\frac{1}{2}q(m-\mu s)}
&\leq \sup_{s\in [0,\frac{s_\ast}{2}]} e^{-\widetilde{\lambda} 
s-\frac{1}{2}q(m-\mu s)}
+ \sup_{s\in (\frac{s_\ast}{2}, s_\ast]} 
e^{-\widetilde{\lambda} s-\frac{1}{2}q(m-\mu s)} \\
&\leq e^{-\frac{1}{2}qm} + 
e^{-\frac{1}{2}q (m-\mu s_\ast) - \widetilde{\lambda} \frac{s_\ast}{2}} 
\\
&\leq e^{-\frac{1}{2}qm} + 
e^{-\frac{1}{2}\widetilde{\lambda}\Psi^H_q(m)},
\end{split}
\end{equation*}
and
$$ \sup_{s\in (s_\ast,\infty)} e^{-\widetilde{\lambda} s}
\leq e^{-\widetilde{\lambda} \Psi^H_q(m)}. $$

We are in a position to conclude that
\begin{equation}
\boxed{J_1(\widetilde{\lambda}) \leq C e^{-\frac{q}{2}m}
+ C e^{-\frac{\widetilde{\lambda}}{2} 
\Psi^H_q(m)}. }
\end{equation}

\paragraph*{An upper bound for $J_2(\widetilde{\lambda})$.}

$$J_2(\widetilde{\lambda})
\leq C \sup_{s\in [0,\tfrac{m}{2\mu}]} e^{-\widetilde{\lambda} s}
~\PP\left(G \geq \frac{m - \mu s}{s^H}\right) + 
C \sup_{s\in (\tfrac{m}{2\mu},\infty)} e^{-\widetilde{\lambda} s}
~\PP\left(G \geq \frac{m - \mu s}{s^H}\right) .
$$

Observe that
$$ \sup_{s\in (\tfrac{m}{2\mu},\infty)} e^{-\widetilde{\lambda} s}
\PP\left(G \geq \frac{m - \mu s}{s^H}\right)
\leq \exp\left(-\widetilde{\lambda} 
\tfrac{m}{2\mu} \right). $$
In addition,
\begin{equation*}
\begin{split}
\sup_{s\in [0,\tfrac{m}{2\mu}]} e^{-\widetilde{\lambda} s}
\PP\left(G \geq \frac{m - \mu s}{s^H}\right)
&\leq \sup_{s\in [0,\tfrac{m}{2\mu}]} 
\exp\left(-\widetilde{\lambda} s
- \frac{1}{2}\frac{(m - \mu s)^2}{s^{2H}}\right) \\
&\leq \sup_{s\in [0,\tfrac{m}{2\mu}]} 
\exp\left(-\widetilde{\lambda} s - \frac{m^2}{8 s^{2H}}\right).
\end{split}
\end{equation*}
The function $s\mapsto - \widetilde{\lambda} s - \frac{m^2}{8 s^{2H}}$ 
reaches its maximum at $s = \left(\frac{H 
m^2}{4\widetilde{\lambda}}\right)^{\frac{1}{1+2H}}$.
Therefore,
$$ \forall s>0,~~- \widetilde{\lambda} s - \frac{m^2}{8 s^{2H}}
\leq -\widetilde{\lambda} \left(\frac{H 
m^2}{4\widetilde{\lambda}}\right)^{\frac{1}{1+2H}}
- \frac{m^2}{8} \left(\frac{H 
m^2}{4\widetilde{\lambda}}\right)^{\frac{-2H}{1+2H}}
\leq - c~m^{\frac{2}{1+2H}}~\widetilde{\lambda}^{\frac{2H}{1+2H}} $$
where $c:=\min_{\frac{1}{4}<H<1}(\tfrac{H}{4})^{\frac{1}{1+2H}}
=2^{-\frac{8}{3}}$, 
from which
$$ \sup_{s\in [0,\tfrac{m}{2\mu}]} e^{-\widetilde{\lambda} s}
~\PP\left(G \geq \frac{m - \mu s}{s^H}\right) 
\leq \exp\left(-2^{-\frac{8}{3}}~m^{\frac{2}{1+2H}}~
\widetilde{\lambda}^{\frac{2H}{1+2H}}\right). $$

We conclude:
\begin{equation}
\boxed{J_2(\widetilde{\lambda}) \leq 
C~\exp\left(-2^{-\frac{8}{3}}~m^{\frac{2}{1+2H}}~
\widetilde{\lambda}^{\frac{2H}{1+2H}}\right)
+ C \exp\left(-\widetilde{\lambda}\tfrac{m}{2\mu} \right). } 
\end{equation}

That ends the proof of~\eqref{ineq:moment-Wlambda-YH-integ-App}.
\end{proof}

\section{Glossary} \label{App:glossary}
\begin{itemize}
\item The process $Y^H$ is defined in Proposition~\ref{prop:Lamperti}.
\item In the statement of Theorem~\ref{prop:majgap_0} one defines 
$\matcal{R}(\lambda) := \sqrt{2\lambda + \mu^2} - \mu$ and 
$\mu := |\widetilde{b}|_\infty := |\tfrac{b}{\sigma}|_\infty$.
\item The constants $mathbb{Y}$ and $y_0$ are defined at the beginning
of the section~\ref{sec:majgap_0}. In that section we
set $m:=\Theta-y_0$.
\item The function $\wl$ is defined by~\eqref{eq:ODE_Y_ext}. It 
satisfies the ODE~\eqref{eq:ODE_Y} on the interval $(-\infty,\Theta)$.
\end{itemize}

\end{appendices}

\end{document}